\documentclass[11pt]{amsart}
\usepackage{amsmath,amssymb,mathtools}
\usepackage{tikz-cd}
\usepackage[margin=20mm]{geometry}
\usepackage[T1]{fontenc}
\usepackage{xcolor}
\usepackage[colorlinks=true,linkcolor=blue,citecolor=blue,urlcolor=blue]{hyperref}
\newtheorem{thm}{Theorem}[section]
\newtheorem{prop}[thm]{Proposition}
\newtheorem{lem}[thm]{Lemma}
\newtheorem{cor}[thm]{Corollary}
\newtheorem{alphthm}{Theorem}

\theoremstyle{definition}
\newtheorem{defn}[thm]{Definition}
\newtheorem{setup}[thm]{Setup}
\newtheorem{notation}[thm]{Notation}
\theoremstyle{remark}
\newtheorem{rem}[thm]{Remark}
\newcommand{\C}{\mathbb C}
\newcommand{\Q}{\mathbb Q}
\newcommand{\R}{\mathbb R}
\newcommand{\Pj}{\mathbb P}
\newcommand{\cO}{\mathcal O}
\newcommand{\cJ}{\mathcal J}
\newcommand{\frakj}{\mathfrak j}
\newcommand{\eps}{\epsilon}

\DeclareMathOperator{\Supp}{Supp}
\DeclareMathOperator{\Exc}{Exc}
\DeclareMathOperator{\ord}{ord}
\DeclareMathOperator{\vol}{vol}
\DeclareMathOperator{\Spec}{Spec}
\DeclareMathOperator{\Bl}{Bl}
\DeclareMathOperator{\Frac}{Frac}
\DeclareMathOperator{\trdeg}{trdeg}
\DeclareMathOperator{\lct}{lct}
\title{Boundedness of minimal modifications}
\author{Junpeng Jiao}
\address{Yau Mathematical Sciences Center, Tsinghua University}
\date{}
\subjclass[2020]{14E30, 14J10, 14B05}
\keywords{Birational morphism, log discrepancy, boundedness, multiplier ideal, log pair}
\begin{document}
\begin{abstract}
For fixed $n$ and $\eps>0$, we prove boundedness of projective birational morphisms $f\colon X\to\Pj^n$ over $\C$ for which $(X,B)$ is an $\eps$-lc pair, $B\ge0$, $K_X+B$ is nef over $\Pj^n$, and $\deg(f_*B)$ is bounded. The nonzero coefficients of $B$ need not have a positive lower bound, and $(\Pj^n,f_*B)$ need not be log canonical. The $f$-exceptional prime divisors on $X$, equipped with their reduced induced scheme structures, form a bounded family of projective schemes, including when they are nonnormal. We also obtain a uniform positive lower bound for the log canonical thresholds of pullbacks of all effective target divisors of bounded degree. The boundedness statement proved in this paper was originally conjectured by Caucher Birkar.
\end{abstract}
\maketitle

\section{Introduction}\label{sec:introduction}

Throughout the paper, the ground field is $\C$, and $\Pj^n=\Pj^n_\C$.

A birational morphism can carry exceptional divisors of increasingly complicated geometry even when its target is fixed. A relatively nef log canonical divisor restricts the modifications that can occur. The question considered here is whether this restriction remains effective when the positive boundary coefficients have no lower bound. We use the degree of the pushforward boundary, which takes account of its coefficients.

The minimal model program supplies existence and extraction results for klt pairs, while boundedness theorems for polarized pairs apply when the nonzero boundary coefficients have a fixed positive lower bound; see \cite{BCHM10,Fujino2011,BirkarPolarised}. The issue in the present setting is to construct a boundary with such a lower bound on the given variety $X$. We also seek boundedness of the reduced exceptional divisors as closed subschemes of $X$, which requires retaining their scheme structures, including possible nonnormality.

Birkar's boundedness theorem for Fano type fibrations includes boundedness of the varieties underlying effective crepant models over bounded bases, with bounded weighted boundary degree and no bound on the reduced boundary support; see \cite[Theorem~1.2 and the discussion of crepant models in Section~1]{BirkarFibrations24}. Here we allow $K_X+B$ to be merely nef over the target, and the pushforward pair need not be lc. The use of root orders and two ideals in a reduction of dimension is closely related to Musta\c{t}\u{a}'s inductive description of log canonicity for monic polynomials; see \cite[Section~4 and Theorem~5.3]{MustataInductive11}. Proposition~\ref{prop:ideals} constructs global coefficient ideals for a signed boundary and bounds the normalized generating degree of the positive ideal independently of the degree of the negative part, as required for our boundedness argument.

The conjecture was inspired by Birkar's work \emph{Singularities on Fano fibrations and beyond}~\cite{BirkarFibrationsBeyond}. His multiplicity theorem~\cite[Theorem~1.10]{BirkarFibrationsBeyond} bounds the multiplicity of a fiber along a divisor of small log discrepancy on a relatively nef birational model. Its hypotheses include an auxiliary lc boundary $\Lambda$ for which the divisor is an lc place. Birkar expects a version without $\Lambda$ and the assumptions involving it. Theorem~\ref{thm:main} is a global analogue over the fixed target $\Pj^n$: control of the weighted degree of the pushforward boundary gives boundedness of the birational morphisms and uniform discrepancy control, without an auxiliary lc boundary. Corollary~\ref{cor:bounded-divisor-threshold} below also gives uniform thresholds for pullbacks of arbitrary effective target divisors of bounded degree.

The conventions on boundedness, degrees of $\R$-divisors, and log discrepancies are collected in Section~\ref{sec:preliminaries}.

\begin{alphthm}[Boundedness of birational morphisms]\label{thm:main}
Fix positive integers $n,k$ and $0<\eps<1$. Consider projective birational morphisms $f\colon X\to Y=\Pj^n$, where $X$ is normal and integral, together with an effective $\R$-boundary $B$ satisfying
\begin{enumerate}
\item $K_X+B$ is $\R$-Cartier and $(X,B)$ is $\eps$-lc;
\item $K_X+B$ is nef over $Y$;
\item $\deg(f_*B)\le k$.
\end{enumerate}
The morphisms $f$ form a bounded class over the fixed target $Y$. The reduced $f$-exceptional prime divisors on $X$ form a bounded class of projective schemes, with no normality or Cartier assumption on those divisors.

For $n\ge2$ there is a positive rational number $\beta=\beta(n,k,\eps)$ such that
\[
 (X,B+\beta f^*\Delta)\text{ is }\eps/2\text{-lc},\qquad \Delta=f_*B.
\]
Every normalized divisorial valuation $v$ then satisfies
\begin{equation}\label{eq:main-comparison}
 (1+\beta)A(v,X,B)\ge\beta A(v,Y,0)+\eps/2.
\end{equation}
\end{alphthm}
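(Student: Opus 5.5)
The first step is to construct a new boundary on $X$ whose nonzero coefficients have a fixed positive lower bound; once that is in hand, the known boundedness of polarized pairs \cite{BirkarPolarised} finishes the argument. The natural source of such a boundary is the pullback of a general hypersurface section of a suitable ideal on $Y$.

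\textbf{Step 1: a global ideal on $Y$.} Write $\Delta=f_*B$, so that $\deg\Delta\le k$. Proposition~\ref{prop:ideals} applied to $\Delta$, viewed as a signed boundary, produces coefficient ideals $\mathfrak a_+,\mathfrak a_-$. I expect the normalized generating degree of $\mathfrak a_+$ to be bounded in terms of $(n,k)$ alone. I also expect these ideals to encode the root orders of $\Delta$ along every divisorial valuation, with $\ord_E(\mathfrak a_+)$ comparable to $\ord_E\Delta$ for the divisors that matter.

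\textbf{Step 2: the threshold $\beta$ and \eqref{eq:main-comparison}.} Let $E$ be any exceptional divisor over $X$. Since $K_X+B$ is $f$-nef, the negativity lemma gives
\[
K_X+B=f^*(K_Y+\Delta)-F\quad\text{with } F\ge0 \text{ exceptional}.
\]
Hence $A(E,X,B)=A(E,Y,\Delta)+\ord_E F\ge \eps$. Now pull back $\beta\Delta$ and use $\ord_E f^*\Delta=A(E,Y,0)-A(E,Y,\Delta)$. The required inequality $A(E,X,B+\beta f^*\Delta)\ge \eps/2$ is equivalent to
\[
(1+\beta)A(E,X,B)\ge \beta A(E,Y,0)+\eps/2 .
\]
This is exactly \eqref{eq:main-comparison}, so the two conclusions are the same statement. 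I would bound $\ord_E f^*\Delta$ linearly in $A(E,X,B)$, via the degree bound on $\mathfrak a_+$ together with $\eps$-lc-ness. For this I would use the multiplier-ideal comparison $\cJ(Y,c\,\mathfrak a_+)=\cO_Y$ for small $c=c(n,k)$, and Skoda-type lct bounds for hypersurfaces of bounded degree. The constant $\beta$ would be chosen from $c$ and $\eps$. The restriction $n\ge2$ presumably enters because for $n=1$ the morphism $f$ is an isomorphism and the statement is vacuous.

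\textbf{Step 3: boundedness.} Take a general member $H\in|\mathfrak a_+(d)|$ of bounded degree $d$, and put
\[
\Theta=(1-t)B+t\beta f^*H/d \quad\text{for a fixed } t .
\]
Then $(X,\Theta)$ should remain $\eps/4$-lc, and $K_X+\Theta$ should be big and nef over $Y$ after twisting by the pullback of an ample divisor. The reduced exceptional divisors appear in $f^*H$ with coefficients bounded below, since $\ord_E f^*H\ge1$ for each of them. The fibers of $f$ then give a polarized $\eps/4$-lc log Calabi–Yau-type pair of bounded volume, and \cite{BirkarPolarised} bounds $X$ together with $\Supp\Theta\supseteq\Exc(f)$. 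Boundedness of each reduced exceptional prime divisor, with its induced scheme structure, follows by taking components of a bounded family of closed subschemes in a Hilbert scheme. This step needs no normality assumption.

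\textbf{Main obstacle.} The crux is Step 2. The pair $(Y,\Delta)$ need not be lc, and the coefficients of $B$ have no lower bound, so the order of $f^*\Delta$ along exceptional divisors has to be controlled uniformly using only $\deg\Delta\le k$ and the $\eps$-lc condition upstairs. I would first try induction on dimension by restricting to general hyperplanes, using the root-order/two-ideal reduction in the spirit of \cite{MustataInductive11}.
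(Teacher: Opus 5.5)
Your Step 2 is the whole content of the theorem, and the tools you name cannot supply it. Skoda-type bounds, or $\cJ(Y,c\,\mathfrak a_+)=\cO_Y$, only bound $\ord_E$ of divisors on $Y$ by $A(E,Y,0)$; this is Lemma~\ref{lem:degree}, giving $\ord_E\Delta\le kA(E,Y,0)$. For $f$-exceptional $E$, however, $A(E,X,B)\le1$. So the missing ingredient is a uniform bound $A(E,Y,0)\le M(n,k,\eps)$ for every $f$-exceptional $E$ (Theorem~\ref{thm:ambient}). This bound must then be applied, with $2k$ and $\eps/2$, to the divisor computing the $\eps/2$-lc threshold of $f^*\Delta$, after extracting that divisor by \cite{BCHM10} on a model still in the class (Proposition~\ref{prop:threshold}). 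In the paper, relative nefness enters through the cone and base point free theorems, not just through $F\ge0$. They produce $N$ with $f^*N=F+M$, so that $\Theta=\Delta-N$ is sub-$\eps$-lc on $Y$ and $A(E,Y,\Theta)=A(E,X,B)$ at exceptional $E$ (Proposition~\ref{prop:signed-target}).

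Applying Proposition~\ref{prop:ideals} to $\Delta$ itself, as in your Step 1, loses this link, since $(Y,\Delta)$ can be far from lc (Proposition~\ref{prop:examples}). That proposition also does not give ideals on $Y$ comparable to $\ord_E\Delta$. Its ideals live on the base $Z=\Pj^{n-1}$ of a projection from a point and compute the threshold loss $I_\Theta(\zeta)$. The induction runs through this projection, not through general hyperplane sections, which do not meet exceptional centers of small dimension. It bounds the ramification index (Proposition~\ref{prop:ramification}). It then descends to an effective relatively nef pair over a model of $Z$ with pushforward degree at most $k^2$ (Proposition~\ref{prop:descent}). Finally it lifts the inductive bound on $a_0(\zeta)$ using discriminants and resultants (Proposition~\ref{prop:ambient-direction}). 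Your claimed equivalence is also inaccurate. In fact $A(E,X,B+\beta f^*\Delta)=(1+\beta)A(E,X,B)-\beta A(E,Y,0)-\beta\ord_EF$, so the $\eps/2$-lc statement implies \eqref{eq:main-comparison} but not conversely.

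Step 3 also fails as written. The boundary $\Theta=(1-t)B+t\beta f^*H/d$ retains $(1-t)B$, whose coefficients have no positive lower bound, so it does not provide what your opening paragraph and \cite[Theorem~1.5]{BirkarPolarised} require. Moreover $K_X+\Theta\equiv_Y(1-t)(K_X+B)+tK_X$ need not be $f$-nef. For example, blow up a point of $\Pj^2$, let $\Delta$ be three lines through it with coefficients $\tfrac12$, and put $B=f^{-1}_*\Delta+\tfrac12E$. Then $K_X+B=f^*(K_Y+\Delta)$, so $K_X+\Theta\equiv_Y tE$, which is negative on $E$. The claim $\ord_Ef^*H\ge1$ for all exceptional $E$ is also unjustified.

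The paper instead takes $C_X=B+f^*(D/m-\Delta)$, with $D$ general in $\cJ(Y,m(1+\beta)\Delta)\otimes\cO_Y(q)$ (Proposition~\ref{prop:recovery}). This cancels the strict transform of $\Delta$ and preserves the class of $K_X+B$ over $Y$. The lower multiplier-ideal estimate, together with $\ord_E\Delta\ge A(E,Y,0)-1+b_E$ from $F\ge0$, then gives exceptional coefficients at least $b_E+\beta/2$; this already needs the $\beta$ of Step 2. Finally, bounding $X$ with a divisor does not by itself bound $f$ over the fixed $\Pj^n$. The paper adds general hyperplane pullbacks to obtain global nefness and bounded volume, and retains one of them to recover $f^*\cO_Y(1)$ with its $n+1$ sections.
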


Relative bigness, if included in the hypotheses, is automatic: the generic fiber of a birational morphism is a point. No separate $\Q$-Gorenstein or $\Q$-factorial assumption on $X$ is required. We impose no positive lower bound on the nonzero coefficients, bound on their denominators, bound on Cartier indices, or bound on the degree of the reduced support of $\Delta$.

\begin{cor}[Thresholds of pullbacks of bounded-degree divisors]\label{cor:bounded-divisor-threshold}
Fix positive integers $n,k,\ell$ and $0<\eps<1$. There is $t=t(n,k,\ell,\eps)>0$ such that, for every pair and morphism in Theorem~\ref{thm:main} and every effective $\R$-divisor $D$ on $Y$ with $\deg D\le\ell$,
\[
 (X,B+t f^*D)\text{ is }\eps/2\text{-lc}.
\]
In particular, $\lct(X,B;f^*D)\ge t$. For $n\ge2$, one may take
\[
 t=\frac{\beta(n,k,\eps)}{2(1+\beta(n,k,\eps))\ell},
\]
and for $n=1$ one may take $t=\eps/(2\ell)$.

Equivalently, fix a real number $r\ge1$ and let $A$ be any very ample Cartier divisor on $Y=\Pj^n$ with $A^n\le r$. Then there is a uniform $t=t(n,k,r,\eps)>0$ such that the same conclusion holds for every effective $\R$-divisor $D$ with $A-D$ pseudo-effective. The constant is independent of $A$, $D$, and the coefficients and reduced support of $D$.
\end{cor}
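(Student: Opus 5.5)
The plan is to deduce the corollary from Theorem~\ref{thm:main} by interpolating between the pair $(X,B)$ and the pair $(X,B+\beta f^*\Delta)$, using the comparison inequality \eqref{eq:main-comparison} together with a bound on $\ord_v f^*D$ in terms of $A(v,Y,0)$. The key input on the target is elementary. For an effective $\R$-divisor $D$ on $\Pj^n$ with $\deg D\le\ell$, the pair $(\Pj^n,D/\ell)$ is log canonical outside a neighborhood of trouble only in the weak sense that every divisorial valuation satisfies
\[
\ord_v(D)\le \ell\, A(v,Y,0).
\]
To prove this, write $D=\sum d_i D_i$. Each $D_i$ is a hypersurface of degree $e_i$, and $\ord_v(D_i)\le e_i\,\ord_v(\mathfrak m)$ fails in general, so we argue via multiplicity instead. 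The bound $\mult_p D\le \deg D$ at the center $p$, combined with $\lct_p(D)\ge 1/\mult_p D$ (the standard inequality from \cite{Kollar} type arguments), gives $\lct(Y;D)\ge 1/\ell$. This is equivalent to $A(v,Y,0)\ge \ord_v(D)/\ell$ for all $v$.

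\textbf{Case $n\ge2$.} Fix $v$ and set $t=\beta/(2(1+\beta)\ell)$. We need
\[
A(v,X,B)-t\,\ord_v(f^*D)\ge \eps/2.
\]
By the target bound, $t\,\ord_v(f^*D)=t\,\ord_v(D)\le t\ell A(v,Y,0)=\frac{\beta}{2(1+\beta)}A(v,Y,0)$. By \eqref{eq:main-comparison}, $\frac{\beta}{1+\beta}A(v,Y,0)\le A(v,X,B)-\frac{\eps}{2(1+\beta)}$. Hence
\[
A(v,X,B)-t\,\ord_v(f^*D)\ge \tfrac12 A(v,X,B)+\tfrac{\eps}{4(1+\beta)}.
\]
Combining this with $A(v,X,B)\ge\eps$ gives at least $\eps/2$. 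Note also that $K_X+B+tf^*D$ is $\R$-Cartier, since $f^*D$ is a pullback of an $\R$-Cartier divisor.

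\textbf{Case $n=1$.} Here $f$ is an isomorphism of curves. The divisor $D$ is a sum of points of total weight at most $\ell$, and coefficients of $B$ are at most $1-\eps$. Thus $B+\frac{\eps}{2\ell}D$ has coefficients at most $1-\eps/2$.

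\textbf{Second formulation.} Reduce to the first. Every very ample $A$ on $\Pj^n$ is $\cO(a)$ with $a^n=A^n\le r$. If $A-D$ is pseudo-effective, then $\deg D\le a\le r$, so we may take $\ell=\lceil r\rceil$.

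\textbf{Main obstacle.} The step I expect to need the most care is the target inequality $\ord_v D\le\ell A(v,Y,0)$ for arbitrary real coefficients. I would prove it by first reducing to the case where $v$ is centered at a closed point $p$ (the general case follows by cutting down with general hyperplanes, or by passing to a generic point). For a point center, use that $(\Pj^n, D/\mult_pD)$ is lc near $p$. This in turn follows by restricting to a general line through $p$ together with inversion of adjunction, or more simply from the standard fact that $\lct_p\ge 1/\mult_p$.
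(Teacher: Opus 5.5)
Your proposal is correct and follows the paper's proof: it makes the same choice $t=\beta/(2(1+\beta)\ell)$, combines \eqref{eq:main-comparison} with the bound $v(D)\le\ell A(v,Y,0)$ in the same way, uses the same coefficient check for $n=1$, and reduces the polarized version in the same way via $A\sim\cO_Y(a)$. The differences are only in presentation: you justify $v(D)\le\ell A(v,Y,0)$ through $\operatorname{mult}_pD\le\deg D$ and $\lct_p\ge1/\operatorname{mult}_p$, which shows $(Y,D/\ell)$ is genuinely lc and makes your hedging unnecessary, where the paper simply cites its Lemma~\ref{lem:degree}; and you take the cruder $\ell=\lceil r\rceil$ in place of $\lceil r^{1/n}\rceil$.
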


The central estimate bounds $A(E,Y,0)$ for every $f$-exceptional prime divisor $E$ on $X$. Its proof proceeds by induction on the dimension. First, we construct an $\R$-divisor $\Theta=\Delta-N$ on $Y$, where $N\ge0$, such that $A(E,Y,\Theta)=A(E,X,B)$ for these divisors. Projection from a suitable point gives a formula for discrepancies in terms of the valuations of differences of roots. We express the resulting maximum using two coherent ideals and control the degree of generators of the first independently of $\deg N$. After principalizing these ideals, general members of base point free linear systems yield an effective pair in dimension $n-1$ with a uniform positive lower bound on log discrepancies, to which the induction hypothesis applies.

We then apply this discrepancy bound to all morphisms satisfying the hypotheses with $k$ replaced by $2k$ and $\eps$ replaced by $\eps/2$. A divisor computing
\[
 \sup\{t\ge0:(X,B+t f^*\Delta)\text{ is }\eps/2\text{-lc}\}
\]
can be extracted on a variety in this class, giving a uniform positive lower bound for the supremum. Multiplier ideals then yield an effective boundary on $X$ whose nonzero coefficients have a uniform positive lower bound. The polarized boundedness theorem of \cite[Theorem~1.5]{BirkarPolarised} applies to these pairs after adding general pullbacks of hyperplanes. Retaining one such pullback in the bounded family determines $f^*\cO_Y(1)$; choosing bases of its global sections recovers the morphisms to the fixed target $Y$.

\begin{alphthm}[Explicit discrepancy estimates]\label{thm:explicit}
For $n\ge2$, put
\[
 \begin{gathered}
 a_n=3(n-1)2^{n-2},\qquad b_n=3(2^{n-1}-1),\\
 C_n=24^{n-1}12^{3(2^{n-1}-n)},\qquad D_n=6C_n2^{a_n}3^{b_n}.
 \end{gathered}
\]
Under the hypotheses of Theorem~\ref{thm:main}, every $f$-exceptional prime divisor $E$ on $X$ and every normalized divisorial valuation $v$ satisfy
\[
 A(E,Y,0)\le C_n2^{b_n}k^{a_n}\eps^{-b_n},\qquad
 A(v,X,B)\ge D_n^{-1}k^{-a_n-1}\eps^{b_n+1}A(v,Y,0).
\]
\end{alphthm}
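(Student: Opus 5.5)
The plan is an induction on $n$, following the strategy sketched after Corollary~\ref{cor:bounded-divisor-threshold}. The two inequalities are proved in order: first the bound on $A(E,Y,0)$ for exceptional $E$, then the lower bound for $A(v,X,B)$, which follows from the first by an extraction argument.

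\emph{Step 1: signed boundary.} Since $K_X+B$ is nef over $Y$ and $f$ is birational, the negativity lemma gives an $\R$-divisor $\Theta=\Delta-N$ on $Y$ with $N\ge0$ and
\[
 K_X+B+F=f^*(K_Y+\Theta),\qquad F\ge0\ \text{exceptional}.
\]
The divisor $N$ should be chosen so that $F$ vanishes along the exceptional divisors, so that $A(E,Y,\Theta)=A(E,X,B)\ge\eps$ for every exceptional $E$. Concretely, I would take $-N$ to be the maximal correction making $f^*(K_Y+\Theta)-K_X-B$ anti-effective. The key point is that $\deg N$ is not controlled. Only $\deg\Delta\le k$ is.

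\emph{Step 2: projection and ideals.} Choose a general point and project, $\pi\colon Y\dashrightarrow\Pj^{n-1}$, so that each component of $\Delta$ and $N$ becomes a monic polynomial in the fibre variable. The log discrepancy of the divisorial valuation $v_E$ along $\Theta$ is then expressed through valuations of differences of roots. Following Musta\c{t}\u{a}, I would encode the resulting maximum by two coherent ideals on $\Pj^{n-1}$, one attached to $\Delta$ and one to $N$. By Proposition~\ref{prop:ideals}, the positive ideal has normalized generating degree bounded by a polynomial in $k$ (of order $k^{2}$ or so), independently of $\deg N$.

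\emph{Step 3: induction.} Principalize the two ideals. General members of the base point free linear systems then give an effective boundary $\Delta'$ on a birational model of $\Pj^{n-1}$. The data satisfy:
\begin{itemize}
\item $\deg\Delta'$ is bounded by roughly $12k^{3}/\eps$;
\item the log discrepancy lower bound degrades to something like $\eps/6$.
\end{itemize}
The induction hypothesis applies to this pair, with base case $n-1=1$, where $A(E,Y,0)$ for a point blowup is bounded trivially by $2k/\eps$. This yields a bound on $A(v_E|_{\mathrm{base}},\Pj^{n-1},0)$, and from it a bound on $A(E,Y,0)$ of the shape $C\,(k/\eps)^{\text{exponent}}$. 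Solving the recursions $a_n=2a_{n-1}+\dots$ and $b_n=2b_{n-1}+3$ produces the stated exponents $a_n=3(n-1)2^{n-2}$ and $b_n=3(2^{n-1}-1)$, with the constant $C_n$ accumulating factors $24$ and $12^{3}$.

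\emph{Step 4: lower bound for arbitrary $v$.} Set $\gamma=\sup\{t:(X,B+tf^*\Delta)\text{ is }\eps/2\text{-lc}\}$. By BCHM, extract a divisor $E$ computing $\gamma$ on a model $X'\to Y$ with $K_{X'}+B'$ nef over $Y$, where $B'$ includes $\gamma f^*\Delta$. This model satisfies the hypotheses with $2k$ and $\eps/2$. Step 3 then bounds $A(E,Y,0)$, while $\ord_E f^*\Delta$ is at most $k\,A(E,Y,0)$ up to a constant, because $\Delta$ has degree at most $k$ and multiplicity at most $k$. These two facts give
\[
 \gamma\gtrsim \eps\,k^{-a_n-1}\eps^{b_n}.
\]
For an arbitrary $v$ I would then use
\[
 A(v,X,B)\ge \gamma\,\ord_v f^*\Delta+\eps/2 .
\]
Combining this with the fact that $A(v,Y,0)$ is bounded by a linear combination of $A(v,Y,\Delta)$ and $\ord_v\Delta$ gives the second inequality with $D_n=6C_n2^{a_n}3^{b_n}$.

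\emph{Main obstacle.} The hard part is Step 2: bounding the generating degree of the positive ideal independently of $\deg N$, while keeping track of constants tightly enough to match the recursion. Matching exact constants is bookkeeping once the recursion shape is fixed.
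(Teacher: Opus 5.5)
\textbf{This is a genuine gap: the recursion you sketch would not produce the stated constants.} The whole content of Theorem~\ref{thm:explicit} is those explicit constants.

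In the paper, the induction passes from $(k,\eta)$ to $(k^2,d(k,\eta))$. The new degree bound $k^2$ comes from $d_+/d\le s^2$ in Proposition~\ref{prop:ideals}, via Proposition~\ref{prop:descent}. The new margin is $d(k,\eta)=\eta/(2\lceil2(k+1)/\eta\rceil)\ge\eta^2/(12k)$, coming from the ramification estimate of Proposition~\ref{prop:ramification}. This margin is quadratic in $\eta$, which is exactly what gives $b_j=2b_{j-1}+3$.

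Your parameters (degree about $12k^3/\eps$, margin about $\eps/6$) would instead lead to $a_j=3a_{j-1}+O(1)$ and $b_j=a_{j-1}+b_{j-1}+O(1)$. That contradicts the recursion you then claim to solve.

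Your Step~3 also hides the passage back from the base. The phrase ``and from it a bound on $A(E,Y,0)$'' is precisely Proposition~\ref{prop:ambient-direction}: $A(E,Y,0)\le(1+k^2/\eps)\,e\,a_0(\zeta)$. This needs the discriminant--resultant divisor $T$ of degree at most $k^2$, together with the uniform bound $e\le R(k,\eps)$ on the restriction index. These supply the factor $R(k,\eta)(1+k^2/\eta)\le12k^3\eta^{-3}$, hence $24k^3\eta^{-3}$ after the ceiling.

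Your base case is also wrong. It is $M_1=1$: every divisorial valuation of $Z=\Pj^1$ is a closed point, so $a_0(\zeta)=1$, i.e.\ $a_1=b_1=0$ and $C_1=1$. A base value $2k/\eps$ would give $a_2=6$, $b_2=5$ instead of $3,3$, even with the correct recursion.

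\textbf{What is missing is the actual computation, which is the paper's proof.} Start from $M_j(k,\eta)=\lceil R(k,\eta)(1+k^2/\eta)M_{j-1}(k^2,d(k,\eta))\rceil$ in Theorem~\ref{thm:ambient}. The estimates $R\le6k\eta^{-2}$, $1+k^2/\eta\le2k^2\eta^{-1}$, $d\ge\eta^2/(12k)$ and $\lceil x\rceil\le2x$ for $x\ge1$ give $M_j\le24k^3\eta^{-3}M_{j-1}(k^2,d(k,\eta))$. The recursions $a_j=3+2a_{j-1}+b_{j-1}$, $b_j=3+2b_{j-1}$, $C_j=24C_{j-1}12^{b_{j-1}}$ then solve to the stated $a_n,b_n,C_n$. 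Finally $\tau=1/\lceil1/\eps\rceil\ge\eps/2$ produces the factor $2^{b_n}$. The induction has to be run on $M_j$ with rational $\eta$ and rounded once at the end; reapplying the rounded statement at each level would add factors.

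For the second inequality, $D_n$ comes from the following choices:
\begin{itemize}
\item $\tau=1/\lceil2/\eps\rceil\in[\eps/3,\eps/2]$;
\item $M=M_n(2k,\tau)\le C_n2^{a_n}3^{b_n}k^{a_n}\eps^{-b_n}$;
\item $\beta=\tau/(kM)$ in Proposition~\ref{prop:threshold};
\item $\beta/(1+\beta)\ge\beta/2\ge\eps/(6kM)$ in \eqref{eq:main-comparison}.
\end{itemize}
Substituting $\eps/2$ into the first inequality instead gives $4^{b_n}$ in place of $3^{b_n}$. Your Step~4 has the right shape.

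Step~1, however, is not a construction. If $F$ is exceptional and vanishes on all exceptional divisors, then $F=0$, which forces $K_X+B\equiv_f0$; also, ``anti-effective'' has the wrong sign. The paper gets $f^*(K_Y+\Theta)=K_X+B-M$, with $M=D_X/q$ for a general non-exceptional $D_X\in|q(cf^*H-F)|$. This requires the cone and base point free theorems, as in Proposition~\ref{prop:signed-target}.
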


These estimates give explicit bounds for log discrepancies. Section~\ref{sec:preliminaries} collects the conventions and proves the discrepancy comparisons and the construction of $\Theta$ used above. Sections~\ref{sec:ramification}--\ref{sec:descent} develop projection and the reduction of dimension. Sections~\ref{sec:induction} and \ref{sec:boundedness} prove Theorem~\ref{thm:main} and Corollary~\ref{cor:bounded-divisor-threshold}. Section~\ref{sec:applications} gives the explicit estimates and examples.

\subsection*{Acknowledgements}
The author is grateful to his postdoctoral mentor, Caucher Birkar, for his guidance and help, and to the Yau Mathematical Sciences Center for providing AI support. The author was supported by the National Key Research and Development Program of China (Grant No.~2025YFA1018100) and by start-up funding from Tsinghua University.

\section{Preliminaries}\label{sec:preliminaries}

We follow Lazarsfeld~\cite{Laz04I,Laz04II} and Koll\'ar--Mori~\cite{KM98} for standard terminology. Our conventions for sub-pairs and bounded couples agree with \cite[Sections~2.7 and~2.13]{BirkarPolarised}. We record the definitions and notation needed below.

\subsection{Valuations, centers, and normalization}\label{subsec:valuation-basics}

We recall the conventions used below; see \cite[Section~1.2]{JM12} and \cite[Tag~00I8]{StacksFoundations}. Let $\mathcal K$ be a function field over $\C$. A \textit{valuation} means an additive real valuation $v\colon \mathcal K^\times\to\R$, trivial on $\C^\times$. We set $v(0)=+\infty$ and require
\[
 v(fg)=v(f)+v(g),\qquad
 v(f+g)\ge\min\{v(f),v(g)\}.
\]
In particular, $v(f/g)=v(f)-v(g)$ for nonzero $f,g$. Its \textit{value group} is $\Gamma_v=v(\mathcal K^\times)\subseteq\R$. The valuation is \textit{trivial} if $v(f)=0$ for every $f\ne0$, and \textit{nontrivial} otherwise.

The \textit{valuation ring}, its maximal ideal, and its \textit{residue field} are
\[
 \cO_v=\{f\in \mathcal K:v(f)\ge0\},\qquad
 \mathfrak m_v=\{f\in \mathcal K:v(f)>0\},\qquad
 \kappa(v)=\cO_v/\mathfrak m_v.
\]
The units of $\cO_v$ are precisely the elements of value zero. A nontrivial valuation is \textit{discrete} if $\Gamma_v=\lambda\mathbb Z$ for some $\lambda>0$. Its ring is then a \textit{discrete valuation ring} (DVR). A generator $\varpi$ of $\mathfrak m_v$ is a \textit{uniformizer}, and $v(\varpi)=\lambda$. Such a valuation is \textit{normalized} if $\lambda=1$, so its value group is exactly $\mathbb Z$.

For a normal variety $V$, write $K(V)$ for its field of rational functions. Suppose $\mathcal K=K(V)$. The \textit{center} of $v$ on $V$, when it exists, is the irreducible closed subvariety $c_V(v)$ whose generic point $\xi$ satisfies
\[
 \cO_{V,\xi}\subseteq\cO_v,\qquad
 \mathfrak m_v\cap\cO_{V,\xi}=\mathfrak m_{V,\xi}.
\]
Thus functions regular at $\xi$ have nonnegative value, and those vanishing there have positive value. We use the closed subvariety, rather than just its generic point, when referring to a center. If $V$ is proper, the center exists and is unique. For an effective Cartier divisor with local equation $h$ at $\xi$, not containing $c_V(v)$ means that $h$ is a unit there, and hence $v(h)=0$.

Let $p\colon W\to V$ be a proper birational morphism with $W$ normal. A prime divisor $E\subset W$ defines the \textit{order of vanishing} $\ord_E$ on $\mathcal K$. At the generic point of $E$, the local ring is a DVR: writing $f=\varpi^m u$ with $u$ a unit gives $\ord_E(f)=m$. A valuation is \textit{divisorial} if it has the form $c\,\ord_E$ for some such $E$ and $c>0$. Its normalized representative is $\ord_E$, whose value group is $\mathbb Z$ and whose center on $V$ is $p(E)$. For example, $2\ord_E$ has value group $2\mathbb Z$ and is not normalized. Having value group $\mathbb Z$ alone does not assert that a valuation is divisorial. Unless a scaling is explicitly specified, all divisorial valuations in this paper are normalized.

For a subfield $\C\subseteq \mathcal K_0\subseteq \mathcal K$, the \textit{restriction} $v|_{\mathcal K_0}$ evaluates the same function $v$ only on elements of $\mathcal K_0$. If $\Gamma_v=\mathbb Z$ and this restriction is nontrivial, its value group is $e\mathbb Z$ for a unique positive integer $e$. Hence
\[
 v|_{\mathcal K_0}=e\zeta,\qquad \zeta(g)=\frac{v(g)}e\quad(g\in \mathcal K_0^\times),
\]
where $\zeta$ is a normalized discrete valuation of $\mathcal K_0$. In projection, $\mathcal K=K(Y)$ and $\mathcal K_0=K(Z)$; Proposition~\ref{prop:ramification} proves the additional assertion that $\zeta$ is divisorial. In contrast, the rescaled valuation $v/e$ is still defined on all of $\mathcal K$, and restricts to $\zeta$ on $\mathcal K_0$.

An \textit{extension} of $\zeta$ from $\mathcal K_0$ to a larger field $\mathcal K'$ is a valuation $\widetilde\zeta$ satisfying $\widetilde\zeta|_{\mathcal K_0}=\zeta$. If $\zeta$ is normalized discrete and $\varpi$ is a uniformizer of $\cO_\zeta$, then $\widetilde\zeta(\varpi)=1$, but $\widetilde\zeta$ need not be normalized on $\mathcal K'$. When $\mathcal K'/\mathcal K_0$ is finite, $\cO_{\widetilde\zeta}$ is again a DVR. If $s$ is its uniformizer and $\varpi=u s^r$ with $u$ a unit, then $r$ is the \textit{ramification index} and $\widetilde\zeta(s)=1/r$. Its normalized representative is $r\widetilde\zeta$. This distinction is used when passing to a splitting field in Notation~\ref{defn:contacts}.

\subsection{Divisors, pairs, and boundedness}

\begin{defn}[Divisors and pairs]\label{defn:signed-degree}\label{defn:discrepancy}
For an $\R$-divisor $D=\sum_i d_iD_i$, write
\[
 D^+=\sum_i\max\{d_i,0\}D_i,\qquad
 D^-=\sum_i\max\{-d_i,0\}D_i,
\]
so $D=D^+-D^-$ and $D^+,D^-$ have no common component. The \textit{support} $\Supp D$ is the union of the prime divisors $D_i$ with $d_i\ne0$, equipped with its reduced structure. Two divisors have \textit{no common component} if no prime divisor occurs with nonzero coefficient in both; their supports may still intersect. On $\Pj^r$, we use the usual degree
\[
 \deg D=D\cdot\cO_{\Pj^r}(1)^{r-1}=\sum_i d_i\deg D_i.
\]
A \textit{sub-pair} $(V,D)$ consists of a normal variety $V$ and an $\R$-divisor $D$ such that $K_V+D$ is $\R$-Cartier. It is a \textit{pair} if $D\ge0$, and $D$ is a \textit{boundary} if its coefficients belong to $[0,1]$. It is \textit{log smooth} if $V$ is smooth and $\Supp D$ has simple normal crossings (SNC).

For a normalized divisorial valuation $v=\ord_E$ as in Subsection~\ref{subsec:valuation-basics}, take a log resolution $p\colon W\to V$ containing $E$ and write
\[
 K_W+D_W=p^*(K_V+D),\qquad
 A(v,V,D)=1-\operatorname{coeff}_E D_W.
\]
Thus $A$ denotes \textit{log discrepancy}, and $K_W+D_W$ is the \textit{crepant pullback} of $K_V+D$. For a rescaled divisorial valuation we set $A(cv,V,D)=cA(v,V,D)$, for $c>0$. For $0\le\eta\le1$, the sub-pair is \textit{sub-$\eta$-lc} if $A(v,V,D)\ge\eta$ for every normalized divisorial valuation $v$. If $(V,D)$ is sub-$\eta$-lc, then for such a $v$ and a positive integer $e$, the rescaled valuation $v/e$ on the same function field satisfies $A(v/e,V,D)=A(v,V,D)/e\ge\eta/e$; the bound $\eta$ applies to the normalized valuation. When $D\ge0$ we omit ``sub''; for $\eta=0$ we write sub-lc or lc. Canonical divisors are chosen compatibly. We use $v(D)$ for the pullback coefficient of an $\R$-Cartier divisor, and $v(\frakj)$ for the minimum of the values of local generators of a nonzero coherent ideal.

For a prime divisor $E$ over $V$, we also write $A(E,V,D):=A(\ord_E,V,D)$. For an $\R$-Cartier divisor $T$ on $V$, we write $\ord_E(T):=\operatorname{coeff}_{E_W}(p^*T)$, where $p\colon W\to V$ is any birational model on which the valuation $\ord_E$ is represented by the prime divisor $E_W$; this value is independent of the chosen model.
\end{defn}

\begin{defn}[Boundedness]\label{defn:bounded}
A collection of projective morphisms $X\to\Pj^n$ is \textit{bounded} if finitely many projective families $\mathcal X_i\to S_i$ of finite type, equipped with morphisms $\mathcal X_i\to\Pj^n\times S_i$, contain every morphism in the collection as a geometric fiber, up to isomorphism over $\Pj^n$. For projective schemes, boundedness has the analogous meaning without a target map. A collection of integral projective schemes is \textit{birationally bounded} if every member is birational to a geometric fiber of finitely many projective families of finite type.

A \textit{couple} $(V,D)$ consists of a normal projective variety and a reduced divisor, without a Cartier condition on $K_V+D$. A collection of couples is bounded if finitely many projective families with reduced divisors contain each couple up to isomorphism, allowing its divisor to be a union of components of the divisor on the fiber. Pairs are \textit{log bounded} when their associated couples $(V,\Supp D)$ are bounded. A family of couples is \textit{log smooth over the base} when the variety is smooth over the base and the divisor has relative simple normal crossings. When specified components or maps to $\Pj^n$ are to be retained, these are included in the families.
\end{defn}

\subsection{Birational models and thresholds}

\begin{setup}\label{setup:source}
Write $Y=\Pj^n$, $H=\cO_Y(1)$, and let $f\colon X\to Y$ satisfy the hypotheses of Theorem~\ref{thm:main}. Put $L=K_X+B$, $\Delta=f_*B$, $d=\deg\Delta$, and
\[
 F=f^*(K_Y+\Delta)-L.
\]
An $f$-exceptional prime divisor always means a prime divisor on $X$ contracted by $f$. When considered as a projective scheme, it carries its reduced closed subscheme structure on $X$.
\end{setup}

\begin{notation}[Divisors on birational models]\label{defn:source-reference}
Let $\rho\colon S\to Y$ be a smooth projective birational model. On a common resolution $p\colon W\to X$, $g\colon W\to S$, put
\[
 K_W+B_W=p^*(K_X+B),\qquad B_S=g_*B_W.
\]
Equivalently, the coefficient of every prime $T\subset S$ in $B_S$ is $1-A(T,X,B)$, so $B_S$ is independent of the common resolution. This notation does not require a morphism between $S$ and $X$ over $Y$.
\end{notation}

\begin{defn}[$\eta$-lc thresholds]\label{defn:margin}
For an $\eta$-lc pair $(V,D)$, $0\le\eta\le1$, and an effective $\R$-Cartier divisor $T$, its \textit{$\eta$-lc threshold} is
\[
 t_\eta(V,D;T)=\sup\{t\ge0:(V,D+tT)\text{ is }\eta\text{-lc}\}.
\]
We allow the value $\infty$ when $T=0$. At $\eta=0$ this is the usual log canonical threshold, denoted by $\lct(V,D;T)$. In Setup~\ref{setup:source}, we apply this notation to $T=f^*\Delta$ and $\eta=\eps/2$. Corollary~\ref{cor:bounded-divisor-threshold} treats pullbacks of arbitrary effective target divisors of bounded degree.
\end{defn}

\subsection{Projection}

\begin{notation}[Projection]\label{defn:directions}
For $n\ge2$, projection from $q\in Y=\Pj^n$ is resolved by the $\Pj^1$-bundle $\Bl_qY\to Z=\Pj^{n-1}$. The exceptional divisor over $q$ is a section. Let $v$ be a normalized divisorial valuation of $K(Y)$ whose restriction to the subfield $K(Z)$ is nontrivial. As in Subsection~\ref{subsec:valuation-basics}, write $v|_{K(Z)}=e\zeta$, where $e\in\mathbb Z_{>0}$ and $\zeta$ is a normalized discrete valuation of $K(Z)$. Proposition~\ref{prop:ramification} shows that $\zeta$ is divisorial.

For a prime $T$ representing $\zeta$ on a smooth model $Z'\to Z$, let $R=\cO_{Z',\eta_T}$, with uniformizer $\varpi$ and residue field $F_\zeta=\kappa(\zeta)$. Such a discrete valuation ring (DVR) is called a \textit{geometric DVR}. Set $\widetilde Y=(\Bl_qY)\times_Z Z'$. After choosing a trivialization over $R$, we have the commutative diagram with Cartesian squares
\[
\begin{array}{ccccc}
\Pj^1_R & \longrightarrow & \widetilde Y & \longrightarrow & \Bl_qY \\
\big\downarrow && \big\downarrow && \big\downarrow \\
\Spec R & \longrightarrow & Z' & \longrightarrow & Z.
\end{array}
\]
Indeed, the pullback to $\Spec R$ of the rank-two vector bundle defining $\Bl_qY\to Z$ is free, since $R$ is local. Thus $(\Bl_qY)\times_Z\Spec R\simeq\Pj^1_R$. We choose this identification so that the exceptional section is the section at infinity, and write $x$ for a coordinate on its complement $\mathbb A^1_R$.

On $\Pj^1_R$, a prime divisor is \textit{horizontal} if it dominates $\Spec R$, and \textit{vertical} otherwise. A valuation is vertical if $v(\varpi)>0$. The integer $\ord_D(\varpi)$ is the multiplicity of a vertical component $D$ in the fiber. For a horizontal divisor $H=\sum_i b_iH_i$, we use
\[
\begin{gathered}
\deg H^+_{\Frac R}=\sum_{b_i>0}b_i\deg(H_i|_{\Frac R}),\\
 \mu_P=\sum_i b_i\operatorname{mult}_P(H_i^{\mathrm{str}}),\quad
 \mu_P^+=\sum_{b_i>0}b_i\operatorname{mult}_P(H_i^{\mathrm{str}})
\end{gathered}
\]
on successive blowups at closed points of the closed fiber. Here $P$ is a closed point on one of these models, and $H_i^{\mathrm{str}}$ denotes the strict transform of $H_i$ on that model.
\end{notation}

\subsection{Monomial valuations and log discrepancies}\label{subsec:monomial-discrepancies}

We recall the local formula used in the next computation. On $V=\mathbb A^m$ with coordinates $z_1,\ldots,z_m$, choose positive rational weights $a_1,\ldots,a_m$. The corresponding \textit{monomial valuation} is
\[
 v_a\!\left(\sum_\beta c_\beta z^\beta\right)
 =\min_{c_\beta\ne0}\sum_{i=1}^m a_i\beta_i,
 \qquad c_\beta\in\C,
\]
extended to rational functions by subtraction on quotients. It is a positive multiple of a divisorial valuation, and its log discrepancy is the sum of its weights:
\begin{equation}\label{eq:monomial-discrepancy}
 A(v_a,V,0)=\sum_{i=1}^m a_i.
\end{equation}
The formula also holds in regular local parameters after localization on a smooth variety; see \cite[Proposition~5.1]{JM12}. For an $\R$-divisor $D=\sum_i d_i(z_i=0)$, subtracting its valuation gives
\[
 A(v_a,V,D)=A(v_a,V,0)-v_a(D)
           =\sum_{i=1}^m(1-d_i)a_i.
\]
These formulas retain the chosen weights: replacing $v_a$ by $cv_a$, for $c>0$, multiplies the log discrepancy by $c$.

For example, weights $(1,1)$ at a smooth surface point give the valuation of the exceptional divisor of the ordinary blowup, with log discrepancy $2$. More generally, weights $(1/r,t)$ in two regular parameters $s,z$ give log discrepancy $1/r+t$. This is the formula applied to the surface over $R'$ below, before the correction for ramification.

We also record how a surface monomial valuation with coprime positive integer weights $(a,b)$ is represented by ordinary point blowups. If $v(s)=a$ and $v(z)=b$ with $b>a$, blow up the center and use the chart with coordinates $(s,z/s)$; the weights become $(a,b-a)$. If $a>b$, use $(s/z,z)$, with weights $(a-b,b)$. Thus the successive weights follow the subtraction form of the Euclidean algorithm:
\[
 (a,b)\longmapsto
 \begin{cases}
 (a,b-a),&b>a,\\
 (a-b,b),&a>b.
 \end{cases}
\]
The greatest common divisor is preserved, so the sequence reaches $(1,1)$. Blowing up that final point extracts a prime divisor $E$ representing $v=\ord_E$. For example, $(2,5)\to(2,3)\to(2,1)\to(1,1)$, followed by the final blowup. This is what we mean by the \textit{Euclidean sequence of point blowups}.

This construction also verifies the discrepancy formula in dimension two. For one step $\sigma\colon S_1\to S_0$ with exceptional divisor $F$, the identity $K_{S_1}=\sigma^*K_{S_0}+F$ gives
\[
 A(v,S_0,0)=A(v,S_1,0)+v(F).
\]
At each subtraction step, $v(F)=\min\{a,b\}$ is exactly the decrease in the sum of the two weights. At the final point the weights are $(1,1)$ and the extracted divisor has discrepancy $2$ on the preceding surface. Reading these identities backwards gives $A(v,S_0,0)=a+b$ for the initial weights.

\subsection{Valuations of roots and polynomial coefficients}

\begin{notation}[Discrepancies and valuations of roots]\label{defn:contacts}
Let $R$ be a geometric DVR with uniformizer $\varpi$ and valuation $\zeta(\varpi)=1$. Projection leads us to study a horizontal divisor $H$ on the surface $\Pj^1_R$. We introduce its closed-fiber threshold and then compute it from equations for $H$.

Write $H=\sum_i b_i\operatorname{Div}(G_i)$, where $b_i\le1$ and $G_i\in R[x]$ are monic of positive degree, with squarefree product over $\Frac R$. The coefficients $b_i$ may be negative. For $d_i=\deg G_i$, let
\[
 G_i^{\mathrm h}(u,v)=v^{d_i}G_i(u/v)
\]
be the homogenization in homogeneous coordinates $[u:v]$ on $\Pj^1_R$, with $x=u/v$. We define $\operatorname{Div}(G_i)$ as the zero divisor of $G_i^{\mathrm h}$, viewed as a section of $\cO_{\Pj^1_R}(d_i)$. Since $G_i$ is monic, $G_i^{\mathrm h}(1,0)=1$, so this divisor is disjoint from the section at infinity $v=0$.

Put $S=\Pj^1_R$ and let $F=(\varpi=0)$ be its closed fiber. We define the \textit{closed-fiber threshold}, also called the local threshold below, by
\begin{equation}\label{eq:closed-fiber-threshold}
 \tau_R:=\sup\{c\in\mathbb R:(S,H+cF)\text{ is sub-lc over the closed point}\}
 =\inf_{w\ \mathrm{vertical}}\frac{A(w,S,H)}{w(\varpi)}.
\end{equation}
Here $w$ runs over divisorial valuations centered in $F$, so $w(\varpi)=w(F)>0$. The equality follows from
\[
 A(w,S,H+cF)=A(w,S,H)-c\,w(F).
\]
Thus each valuation bounds the allowed coefficient by $c\le A(w,S,H)/w(\varpi)$. Dividing by $w(\varpi)$ accounts for the different multiplicities of the closed fiber. We allow $c\in\mathbb R$, so $\tau_R$ may be negative when $(S,H)$ is not sub-lc over the closed point. When it is sub-lc there, $\tau_R\ge0$ and is the usual relative log canonical threshold of $F$ with respect to $(S,H)$, computed over the closed point of $\Spec R$.

Extend $\zeta$ to a splitting field, retaining the scaling $\zeta(\varpi)=1$, and let $R'$ be the valuation ring of this extension. The roots are integral, and the squarefree condition makes them distinct. Label all the roots $\alpha_j$. If $\alpha_j$ is a root of $G_i$, assign the section $x=\alpha_j$ the coefficient $b_j:=b_i$, inherited from the original component $H_i$. Thus $i$ indexes the original horizontal components, whereas $j$ indexes their individual roots. Over $R'$, the horizontal pullback is the sum of these sections with their assigned coefficients. Set
\[
 c_{jh}=\zeta(\alpha_j-\alpha_h),\qquad c_{jj}=\infty.
\]
For $j\ne h$, $c_{jh}$ measures the order to which the two sections agree along the closed fiber. A larger value means higher tangency.

We explain the calculation leading to the discrepancy formula. Put $S'=\Pj^1_{R'}$, let $g\colon S'\to S$ be the base change, and write $H'=g^*H$. Choose a uniformizer $s$ of $R'$, and let $r=\ord_s(\varpi)$ be the ramification index. Thus $\varpi$ is a unit times $s^r$. The extended valuation, still denoted by $\zeta$, satisfies
\[
 \zeta=\frac1r\ord_s,\qquad \zeta(s)=\frac1r,\qquad \zeta(\varpi)=1.
\]

Fix a root $\alpha_j$, put $z=x-\alpha_j$, and choose a rational $t>0$. We take the monomial valuation $w'$ with weights $1/r$ and $t$ in the parameters $s,z$. Explicitly, for $f_\ell\in\Frac R'$,
\[
 w'\!\left(\sum_\ell f_\ell z^\ell\right)
 =\min_{f_\ell\ne0}\{\zeta(f_\ell)+\ell t\}.
\]
We extend this rule to quotients by subtraction. It gives $w'(s)=1/r$, $w'(z)=t$, and $w'(\varpi)=1$. Since the weights are positive and rational, $w'$ is a positive multiple of a divisorial valuation. We use log discrepancy with the same scaling, so $A(cw',S',H')=cA(w',S',H')$ for $c>0$.

For another root $\alpha_h$, the identity $x-\alpha_h=z+(\alpha_j-\alpha_h)$ and the defining minimum above give
\[
 w'(x-\alpha_h)=\min\{t,\zeta(\alpha_j-\alpha_h)\}
               =\min\{t,c_{jh}\}.
\]
Even when the two values coincide, there is no cancellation: these are distinct powers of $z$, namely $z^1$ and $z^0$. Thus the equality uses the monomial choice of $w'$, not just the valuation inequality. Since $H'$ is the sum of the sections $x=\alpha_h$ with coefficients $b_h$, it follows that
\[
 w'(H')=\sum_h b_h\min\{t,c_{jh}\}.
\]
The elements $s,z$ form a regular system of parameters at their common zero. The monomial discrepancy formula \eqref{eq:monomial-discrepancy} gives $A(w',S',0)=w'(s)+w'(z)=1/r+t$. Subtracting the contribution of $H'$ therefore gives
\[
 A(w',S',H')=\frac1r+t-\sum_h b_h\min\{t,c_{jh}\}.
\]

It remains to compare this with the discrepancy over $S$. Let $F'=(s=0)$ be the closed fiber of $S'$. The base change is \emph{\'etale} over the generic fiber, so its canonical correction is supported on $F'$. In characteristic zero its coefficient is $r-1$, the order of the base Jacobian, as verified in Proposition~\ref{prop:contact}. With compatible choices of canonical divisors, we therefore have
\[
 K_{S'}=g^*K_S+(r-1)F',\qquad
 g^*(K_S+H)=K_{S'}+H'-(r-1)F'.
\]
The divisor in the crepant pullback is therefore $H'-(r-1)F'$. Replacing $H'$ by this divisor increases the log discrepancy by
\[
 (r-1)w'(F')=\frac{r-1}{r}.
\]
Let $w$ be the normalized divisorial valuation proportional to the restriction of $w'$ to $\Frac(R)(x)$. Since $w'(\varpi)=1$, this restriction is $w/w(\varpi)$. The discrepancy formula \eqref{eq:finite-contact} for the finite field extension, proved in Proposition~\ref{prop:contact}, now yields
\[
\begin{aligned}
 \frac{A(w,S,H)}{w(\varpi)}
 &=A\bigl(w',S',H'-(r-1)F'\bigr)\\
 &=\frac1r+t-\sum_h b_h\min\{t,c_{jh}\}+\frac{r-1}{r}\\
 &=1+t-\sum_h b_h\min\{t,c_{jh}\}.
\end{aligned}
\]
In particular, the constant $1$ includes the ramification correction: it is $1/r+(r-1)/r$. The term $t$ comes from the fiber coordinate, and the sum is the contribution of the horizontal divisor. Minimizing this expression motivates
\begin{equation}\label{eq:root-contact-max}
 I=\max_{j,\,t\ge0}\left\{\sum_h b_h\min\{t,c_{jh}\}-t\right\}.
\end{equation}
The case $t=0$ corresponds to the divisorial valuation of the closed fiber $(\varpi=0)\subset\Pj^1_R$ and gives zero in the maximum. The final slope for each $j$ is $b_j-1\le0$, so $I$ is finite and nonnegative. Set $I=0$ when there are no roots. Proposition~\ref{prop:contact} justifies the calculation, proves that these valuations compute the infimum, and gives
\[
 \tau_R=\inf_{w\ \mathrm{vertical}}\frac{A(w,\Pj^1_R,H)}{w(\varpi)}=1-I.
\]

For example, take $H=\tfrac34\operatorname{Div}(x)+\tfrac34\operatorname{Div}(x-\varpi^m)$ for an integer $m\ge1$. Then $c_{12}=m$, and the expression defining $I$ is $t/2$ for $0\le t\le m$ and $3m/4-t/4$ for $t\ge m$. Hence $I=m/2$ and $\tau_R=1-m/2$: higher tangency lowers the closed-fiber threshold.

Returning to the projection $Y\dashrightarrow Z$, let $\Theta$ be an $\R$-divisor on $Y$ with coefficients at most one and $q\notin\Supp\Theta$. Fix a normalized divisorial valuation $\zeta$ of $K(Z)$ and choose its geometric DVR $R$ as in Notation~\ref{defn:directions}. Let $H_\zeta$ be the horizontal divisor on $\Pj^1_R$ induced by $\Theta$, and write $H_\zeta=\sum_i b_i\operatorname{Div}(G_i)$ with $G_i\in R[x]$ monic, using the coefficients of the corresponding components of $\Theta$.

To interpret the contact function, first define
\[
 a_\Theta(\zeta):=\inf_{v|_{K(Z)}=e\zeta,\ e>0}
       \frac{A(v,Y,\Theta)}e.
\]
Taking $\Theta=0$ in this definition gives $a_0(\zeta)$. Proposition~\ref{prop:contact} identifies this zero-boundary value with the log discrepancy on the base:
\[
 a_0(\zeta)=A(\zeta,Z,0).
\]
The smooth closed fiber contributes $1$ and the base Jacobian contributes $A(\zeta,Z,0)-1$; the valuation of the closed fiber attains their sum.

We define $I_\Theta(\zeta)$ to be exactly the local number $I$ in \eqref{eq:root-contact-max}, applied to the data $(R,H_\zeta)$. Explicitly, extend $\zeta$ to a splitting field with $\zeta(\varpi)=1$, label the roots of the $G_i$ by $\alpha_h$, and assign each root the coefficient $b_h$ of its component. Then
\[
 I_\Theta(\zeta)
 :=\max_{j,\,t\ge0}\left\{\sum_h b_h\min\{t,c_{jh}\}-t\right\},
 \qquad c_{jh}=\zeta(\alpha_j-\alpha_h),\quad c_{jj}=\infty.
\]
Set $I_\Theta(\zeta)=0$ if there are no roots. Proposition~\ref{prop:contact} proves independence of the extension to the splitting field and gives
\[
 a_\Theta(\zeta)=a_0(\zeta)-I_\Theta(\zeta).
\]
Thus $I_\Theta(\zeta)=a_0(\zeta)-a_\Theta(\zeta)\ge0$ measures the decrease of the infimum of the discrepancy quotients $A(v,Y,\Theta)/e$ from the zero-boundary case. The horizontal local pair $(\Pj^1_R,H_\zeta)$ has closed-fiber threshold $1-I_\Theta(\zeta)$. The crepant pullback of $(Y,\Theta)$ has threshold $a_\Theta(\zeta)=a_0(\zeta)-I_\Theta(\zeta)$; the shift by $a_0(\zeta)-1$ comes from the base Jacobian. When $(Y,\Theta)$ is sub-lc, this latter threshold is the relative log canonical threshold along the closed fiber $F_R=(\varpi=0)$: adding $cF_R$ changes each discrepancy to $A(v,Y,\Theta)-ce$, so the largest allowed coefficient is precisely the infimum defining $a_\Theta(\zeta)$. In this setting, $I_\Theta(\zeta)$ is the loss of that threshold, computed by the contacts between the roots.
\end{notation}

\begin{notation}[Gauss valuations and ideals]\label{defn:coefficient-ideals}
We first specify the base and its valuation. As in Notation~\ref{defn:directions}, $Y=\Pj^n$, $n\ge2$, and $Z=\Pj^{n-1}$ is the base of projection from a point $q\in Y$. Let $\zeta$ be a normalized divisorial valuation of $K(Z)$: it measures order of vanishing along a prime divisor on a birational model of $Z$. In the projection construction, $\zeta$ comes from a divisorial valuation $v$ of $K(Y)$ whose restriction to $K(Z)$ is nonzero, with $v|_{K(Z)}=e\zeta$ for a positive integer $e$.

Let $s\ge1$ be an integer, and let $T=(T_1,\ldots,T_s)$ be algebraically independent auxiliary variables over $K(Z)$. For a multi-index $\lambda=(\lambda_1,\ldots,\lambda_s)\in\mathbb Z_{\ge0}^s$, write $T^\lambda=T_1^{\lambda_1}\cdots T_s^{\lambda_s}$. A polynomial $P(T)=\sum_\lambda a_\lambda T^\lambda$ is a finite sum with coefficients $a_\lambda\in K(Z)$. The \textit{Gauss extension} $\zeta_G$ is the valuation on $K(Z)(T_1,\ldots,T_s)$ defined, for $P\ne0$, by
\[
 \zeta_G(P)=\min_{a_\lambda\ne0}\zeta(a_\lambda),
 \qquad \zeta_G(P/Q)=\zeta_G(P)-\zeta_G(Q).
\]
Here $Q$ is any nonzero polynomial in $K(Z)[T_1,\ldots,T_s]$. Thus the value of a polynomial is the smallest value of its nonzero coefficients; in particular, $\zeta_G(T_i)=0$.

We apply this rule to coefficients varying on $Z$. Choose homogeneous coordinates $y=[y_0:\cdots:y_{n-1}]$ on $Z$, and consider a nonzero polynomial
\[
 C(y,T)=\sum_\lambda f_\lambda(y)T^\lambda.
\]
Here every nonzero coefficient $f_\lambda(y)$ is homogeneous of the same degree $d_0\in\mathbb Z_{\ge0}$: every monomial in $f_\lambda$ has total $y$-degree $d_0$. The powers $T^\lambda$ can have different degrees. For example, on $Z=\Pj^1$, the polynomial
\[
 C(y,T)=y_0^2T+y_0y_1T^3
\]
has two coefficients, $y_0^2$ and $y_0y_1$, both of degree $d_0=2$ in $y$.

Each $f_\lambda$ is a global section of the line bundle $\cO_Z(d_0)$. On the affine chart $U_i=(y_i\ne0)$, dividing by $y_i^{d_0}$ turns it into a regular function. The \textit{ideal sheaf generated by the coefficients} is therefore
\[
 \frakj=\operatorname{im}\!\left(
 \bigoplus_\lambda\cO_Z(-d_0)\xrightarrow{(f_\lambda)}\cO_Z
 \right),\qquad
 \frakj|_{U_i}=\left(\frac{f_\lambda}{y_i^{d_0}}\right)_\lambda.
\]
In the example, on $U_1$ with affine coordinate $u=y_0/y_1$, this ideal is $(u^2,u)=(u)$. If the generic point of the center of $\zeta$ lies in $U_i$, its valuation is
\[
 \zeta(\frakj)=\min_{f_\lambda\ne0}\zeta\!\left(\frac{f_\lambda}{y_i^{d_0}}\right)
 =\zeta_G\!\left(\frac{C(y,T)}{y_i^{d_0}}\right).
\]
This explains how the Gauss extension records the valuation of the coefficient ideal without assigning numerical values to $T$. The twist $\frakj(d_0)=\frakj\otimes\cO_Z(d_0)$ is globally generated, since twisting the defining map gives a surjection $\bigoplus_\lambda\cO_Z\to\frakj(d_0)$. In later bounds on $d_0/d$, the numerator $d_0$ is this common degree, and $d$ is the positive integer denominator in the ideal expressions of Proposition~\ref{prop:ideals}.

For the monomial valuations used later, let $z_1,\ldots,z_m$ be affine variables and let $a=(a_1,\ldots,a_m)\in\R_{\ge0}^m$ be their weights. For $\beta=(\beta_1,\ldots,\beta_m)\in\mathbb Z_{\ge0}^m$, put $z^\beta=\prod_i z_i^{\beta_i}$ and $\langle a,\beta\rangle=\sum_i a_i\beta_i$. With coefficients $c_\beta\in\C$, define
\[
 \zeta_a\!\left(\sum_\beta c_\beta z^\beta\right)
 =\min_{c_\beta\ne0}\langle a,\beta\rangle,
\]
and extend to $\C(z_1,\ldots,z_m)$ by subtraction on quotients. Thus $\zeta_a(z_i)=a_i$. We retain these weights without renormalizing; $a=0$ gives the trivial valuation, which is zero on every nonzero rational function.

Finally, fix polynomials $G_i(x,z_1,\ldots,z_m)$ that are monic of positive degree in $x$, with squarefree product over $\C(z)$, and fix coefficients $b_i\le1$, as in Notation~\ref{defn:contacts}. For nonzero rational $a$, $I(a)$ is the root-difference expression $I$ in that notation computed using $\zeta_a$, with its given scaling. The polynomials $G_i$ and coefficients $b_i$ remain fixed as $a$ varies. For real $a$, $I(a)$ denotes the continuous extension supplied by Corollary~\ref{cor:lipschitz}; in particular, $I(0)=0$.
\end{notation}

\subsection{Comparison on birational models}

We now compare log discrepancies on different birational models. On a smooth model $W$, an $\R$-divisor $D$ with simple normal crossing support and coefficients at most $1-\eta$, for $0<\eta\le1$, defines a sub-$\eta$-lc pair. To see this directly, if $z_1,\dots,z_r$ are local equations of the components of $D$ with positive coefficient that contain the center of $v$, then
\begin{equation}\label{eq:snc}
 A(v,W,0)\ge\sum_{i=1}^r v(z_i),\qquad A(v,W,0)\ge1.
\end{equation}
For the first inequality, the SNC condition lets us complete $z_1,\ldots,z_r$ to local coordinates $z_1,\ldots,z_m$, where $m=\dim W$, near a general point of the center of $v$. Take a smooth model $p\colon W'\to W$ containing the prime divisor $E$ with $v=\ord_E$. Near the generic point of $E$, write $p^*z_i=u^{m_i}a_i$ for $1\le i\le r$, where $u$ is a local equation of $E$, $a_i$ is a regular unit, and $m_i=v(z_i)$. Then
\[
 p^*\!\left(\frac{dz_i}{z_i}\right)
 =m_i\frac{du}{u}+\frac{da_i}{a_i}.
\]
The form $\omega=(dz_1\wedge\cdots\wedge dz_m)/(z_1\cdots z_r)$ therefore pulls back with at most a simple pole along $E$: its wedge expansion contains at most one factor $du/u$, since $du\wedge du=0$, and all remaining factors are regular. By the definition of log discrepancy,
\[
 -1\le\ord_E(p^*\omega)
 =A(v,W,0)-1-\sum_{i=1}^r v(z_i),
\]
which proves the first inequality. Pulling back the regular form $dz_1\wedge\cdots\wedge dz_m$ also gives $A(v,W,0)-1\ge0$, proving the second. If the positive coefficients are $b_i\le1-\eta$, these inequalities give
\[
 A(v,W,D)\ge A(v,W,0)-(1-\eta)\sum_i v(z_i)
 \ge\eta A(v,W,0)\ge\eta;
\]
negative coefficients only increase the discrepancy. This also shows that sub-$\eta$-lc can be checked on one log resolution. We use projective log resolutions and principalizations as in \cite[Theorems~26--27]{Kol05}.

\begin{lem}[General members of a base point free linear system]\label{lem:free-members}
On a smooth projective variety, a general member of a base point free linear system is smooth, and its union with any fixed reduced SNC divisor $\Sigma$ has simple normal crossings. It avoids containing finitely many prescribed nonempty proper subvarieties. Finitely many members may be chosen successively with these properties; empty members are allowed.
\end{lem}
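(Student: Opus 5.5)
The plan is to deduce the lemma from Bertini's theorem in characteristic zero, applied to the morphism $\phi\colon V\to\Pj^N$ defined by the base point free linear system $|M|$ on the smooth projective variety $V$. Members of $|M|$ correspond to hyperplanes $\Lambda\subset\Pj^N$ via $\phi^{-1}(\Lambda)$. If $\phi$ maps $V$ to a point, every member is empty, and all assertions hold trivially (empty members are allowed). Assume from now on that this is not the case. I would use the Kleiman--Bertini form: for a morphism $g\colon W\to\Pj^N$ from a smooth variety, the preimage $g^{-1}(\Lambda)$ of a general hyperplane is smooth, possibly empty. This follows from generic smoothness applied to the incidence correspondence $\{(w,\Lambda):g(w)\in\Lambda\}$, which is a projective bundle over $W$ and hence smooth.

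\textbf{Smoothness and the SNC condition.} Write $\Sigma=\sum_i\Sigma_i$. For each subset $J$ of indices, the stratum $\Sigma_J=\bigcap_{i\in J}\Sigma_i$ is smooth of codimension $|J|$, because $\Sigma$ is SNC. Apply the Bertini statement to each $\phi|_{\Sigma_J}$, with $\Sigma_\emptyset=V$. There are finitely many strata, so a general $\Lambda$ works for all of them at once. For such $\Lambda$, the member $M_\Lambda=\phi^{-1}(\Lambda)$ is smooth, and every intersection $M_\Lambda\cap\Sigma_J$ is smooth of the expected codimension $|J|+1$ in $V$, or empty. Hence $M_\Lambda$ is not a component of $\Sigma$ and meets every stratum transversally. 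Checking local equations along each stratum then gives that $\Sigma+M_\Lambda$ is SNC. One point needs care: $M_\Lambda$ may be disconnected, but it is still smooth. Each of its connected components is then a smooth prime divisor, and the components are pairwise disjoint, so SNC still holds.

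\textbf{Avoiding prescribed subvarieties.} Let $W_1,\dots,W_m$ be the prescribed nonempty proper subvarieties, and take each to be irreducible by passing to components. Since $|M|$ is base point free, choose a point $w_k\in W_k$. The hyperplanes through $\phi(w_k)$ form a proper closed subset of the dual space. Outside this subset $M_\Lambda$ does not contain $w_k$, and so it does not contain $W_k$. Intersecting finitely many dense open conditions keeps a dense open set of $\Lambda$.

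\textbf{Successive choices.} Choose members $M_1,\dots,M_s$ inductively. At step $j$, replace $\Sigma$ by $\Sigma+M_1+\cdots+M_{j-1}$, which is reduced and SNC by induction, and apply the previous two paragraphs. The only real subtlety is confirming that the reduced divisor $M_j$ (possibly disconnected) is legitimately adjoined to $\Sigma$ at the next step; everything else is standard Bertini. I expect no real obstacle beyond bookkeeping the strata and the empty case.
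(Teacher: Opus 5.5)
Your proposal is correct and follows essentially the paper's route: generic smoothness in characteristic zero applied to the incidence correspondence over each stratum of $\Sigma$ (including the ambient variety), the local parameter criterion for SNC, a proper-closed-condition argument to avoid the prescribed subvarieties, and induction after adjoining each chosen member to $\Sigma$. The only difference is cosmetic. You parametrize members by hyperplanes in the dual projective space, so the incidence is a projective bundle. The paper uses the affine space of sections, so its incidence is the kernel vector bundle of the evaluation map.
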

\begin{proof}
Let $\mathcal L$ be the line bundle defining the linear system, and let $V\subseteq H^0(\mathcal L)$ be its finite-dimensional space of sections, which generates $\mathcal L$ at every point. Let $S$ run through the irreducible components of all nonempty intersections of components of $\Sigma$, together with the ambient variety. Each $S$ is smooth, and evaluation $V\otimes\cO_S\to\mathcal L|_S$ is surjective. The incidence scheme $\{(s,x)\in V\times S:s(x)=0\}$ is its kernel vector bundle, hence smooth. Its generic fiber over the affine parameter space $V$ is regular, since its local rings are localizations of regular rings. The parameter function field has characteristic zero, so this fiber is smooth by \cite[Tag~0B8X]{StacksBertiniSlice}; smoothness spreads to a nonempty open subset of the parameter space by \cite[Tag~0C0C]{StacksBertiniSlice}. Intersect these opens for the finitely many choices of $S$. Sections vanishing identically on any fixed nonempty subvariety form a proper linear subspace, since $V$ generates $\mathcal L$. Excluding these subspaces also ensures that the member misses every zero-dimensional $S$. Its intersection with every positive-dimensional $S$ is a smooth Cartier divisor, which is the local parameter criterion for SNC. Repeat after adding each member to $\Sigma$.
\end{proof}

\begin{lem}[Negativity and comparison of log discrepancies]\label{lem:comparison}
In Setup~\ref{setup:source}, $F$ is effective and exceptional, and
\begin{equation}\label{eq:correction}
 A(v,X,B)=A(v,Y,\Delta)+v(F)\ge A(v,Y,\Delta).
\end{equation}
Let $\rho\colon S\to Y$ be a projective birational morphism with $S$ smooth. Let $B_S$ be the divisor of Notation~\ref{defn:source-reference}. Then
\[
 A(v,X,B)\ge A(v,S,B_S)
\]
for every $v$, without a morphism $X\to S$ being required.
\end{lem}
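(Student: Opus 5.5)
We prove the two assertions of Lemma~\ref{lem:comparison} separately, both by comparing crepant pullbacks on a common resolution.

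\emph{Step 1: $F$ is exceptional and effective.} Since $f_*B=\Delta$ and $f_*K_X=K_Y$ for compatible canonical divisors, we get $f_*F=K_Y+\Delta-f_*(K_X+B)=0$, so $F$ is $f$-exceptional. Because $Y=\Pj^n$ is smooth, $f^*(K_Y+\Delta)$ is defined, so $F$ is $\R$-Cartier, being a difference of $\R$-Cartier divisors. Moreover $-F=L-f^*(K_Y+\Delta)$ is nef over $Y$, since $L$ is nef over $Y$ by hypothesis~(2) and the pullback is trivial on fibers. The negativity lemma \cite[Lemma~3.39]{KM98} then gives $F\ge0$, as $f_*(-F)=0$ and $-F$ is $f$-nef. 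I would pass to a resolution $W\to X$ and apply negativity to the pullback of $-F$ there, which remains nef over $Y$ and pushes forward to zero.

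\emph{Step 2: the formula \eqref{eq:correction}.} Take a log resolution $p\colon W\to X$ on which $v=\ord_E$ is represented, and set $g=f\circ p$. Pulling back gives
\[
 p^*(K_X+B)=g^*(K_Y+\Delta)-p^*F .
\]
Comparing the coefficients of $E$ in the crepant boundaries yields $A(v,X,B)=A(v,Y,\Delta)+v(F)$. The inequality follows since $v(F)\ge0$ for the effective $\R$-Cartier divisor $F$.

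\emph{Step 3: the model $S$.} Take a common resolution $p\colon W\to X$, $g\colon W\to S$, which may be refined so that $E$ appears on $W$. Define $B_W$ by $K_W+B_W=p^*(K_X+B)$ and $B_S=g_*B_W$, so that $A(v,X,B)=A(v,W,B_W)$. It then suffices to show $A(v,W,B_W)\ge A(v,S,B_S)$, i.e.\ that $G:=g^*(K_S+B_S)-(K_W+B_W)$ is effective; this is $\R$-Cartier because $S$ is smooth. By construction $g_*G=0$, so $G$ is $g$-exceptional. The relevant nefness comes from $X$: since $K_W+B_W=p^*L$ and $L$ is nef over $Y$, the divisor $K_W+B_W$ is nef over $Y$, hence nef over $S$, because curves contracted by $g$ are contracted over $Y$. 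Therefore $-G$ is $g$-nef with $g_*(-G)=0$, and negativity over $S$ gives $G\ge0$. Hence $A(v,W,B_W)=A(v,S,B_S)+v(G)\ge A(v,S,B_S)$.

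\emph{Main obstacle.} The main point requiring care is the use of the negativity lemma for $\R$-divisors on a possibly singular $X$ in Step~1. Applying it to the pullback on the smooth $W$ over $Y$ avoids needing $X$ to be $\Q$-factorial: there the divisor $p^*F$ is exceptional over $Y$ and $-p^*F$ is nef over $Y$. The model $S$ in Step~3 is handled the same way, with the only additional check being that $p^*L$ is nef over $S$.
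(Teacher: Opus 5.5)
Your proposal is correct and is essentially the paper's proof. Both arguments apply negativity to the $\R$-Cartier, $f$-exceptional divisor $F$ with $-F$ $f$-nef, pull back to a model representing $v$ to get \eqref{eq:correction}, and then apply negativity to $G=g^*(K_S+B_S)-p^*L$ on a common resolution, using that $g$-contracted curves map to points of $Y$. The differences are cosmetic: the paper applies negativity directly on $X$ (since $F$ is already $\R$-Cartier, the detour through $W$ is unnecessary), and it cites an $\R$-divisor version of the negativity lemma, whereas Lemma~3.39 of Koll\'ar--Mori is stated for $\Q$-divisors.
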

\begin{proof}
The divisor $F$ is $\R$-Cartier, has zero pushforward, and $-F$ is $f$-nef. Negativity gives $F\ge0$; see \cite[Lemma~4.16]{Fujino2011}. Pulling its defining equality to a model representing $v$ gives \eqref{eq:correction}. For the assertion concerning $S$, take a smooth common resolution $p\colon W\to X$, $g\colon W\to S$, with $K_W+B_W=p^*L$ and $B_S=g_*B_W$. The divisor $G=g^*(K_S+B_S)-p^*L$ is $g$-exceptional, and $-G$ is $g$-nef because $g$-contracted curves map to points of $Y$. Negativity gives $G\ge0$. After replacing $W$ by a higher smooth model on which $v$ is represented by a prime divisor, the coefficient of the pullback of $G$ at that divisor is $A(v,X,B)-A(v,S,B_S)$.
\end{proof}

\begin{lem}[Rational approximation of the boundary]\label{lem:rational}
For each rational $0<\tau\le\eps$, the boundary $B$ is a coefficientwise limit of $\Q$-boundaries on the same support and on the same $X$, with $\tau$-lc pair, $\Q$-Cartier $f$-nef log canonical divisor, and degree of the pushforward at most $k$. Moreover $B$ belongs to a rational polytope all of whose points have these properties, with $\R$-Cartier log canonical divisors at real points.
\end{lem}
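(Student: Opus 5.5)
The plan is a standard polytope argument in coefficient space, with the freedom that $\tau\le\eps$ is allowed strictly smaller. Let $B=\sum_{i=1}^m b_iB_i$ with $b_i>0$, and let $V\subset\R^m$ be the vector space of coefficient vectors $c=(c_i)$. Each condition is cut out by finitely many rational linear equalities and inequalities in $c$, possibly after passing to a subset of $V$, and $B$ is a point of the resulting set. The conclusion then follows because a rational polytope containing an irrational point contains rational points arbitrarily close to it.

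\emph{Step 1 (Cartier condition).} Let $\mathcal L\subseteq V$ be the set of $c$ for which $K_X+\sum c_iB_i$ is $\R$-Cartier. Fix a Weil divisor $K_X$. The map from $V$ to Weil divisors modulo $\R$-Cartier divisors (tensored with $\R$) is affine and defined over $\Q$, because Weil and Cartier divisor groups are lattices. Hence $\mathcal L$ is a rational affine subspace, and it contains $B$. Rational points of $\mathcal L$ give $\Q$-Cartier divisors, since a rational combination that is $\R$-Cartier is $\Q$-Cartier by the same lattice argument.

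\emph{Step 2 (lc and nef conditions).} Fix a log resolution $p\colon W\to X$ of $(X,\Supp B)$. For $c\in\mathcal L$, the coefficients of the crepant pullback are affine functions of $c$ with rational coefficients, coming from the numerical pullback on $W$. By \eqref{eq:snc}, $\tau$-lc can be checked on $W$, so the $\tau$-lc condition consists of finitely many rational linear inequalities (coefficients $\le1-\tau$), together with $c_i\ge0$.

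For $f$-nefness, I will use the relative cone theorem. Since $(X,B)$ is klt and $B$ lies in the interior of this region for the strict inequality coming from $\eps>\tau$, there is a neighbourhood in which the pairs are klt. The standard Shokurov polytope argument (cf. \cite{BCHM10}, boundedness of extremal rays with $-(K_X+\Gamma)\cdot R\le 2n$) shows that only finitely many $f$-extremal rays $R_1,\dots,R_p$ matter near $B$. Nefness near $B$ is then the finitely many rational inequalities $(K_X+\sum c_iB_i)\cdot R_j\ge0$; the intersection numbers are rational because the $R_j$ are spanned by curves.

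This is the step I expect to be the main obstacle. Near $B$ one must show that nef is equivalent to nonnegativity on those finitely many rays. I will argue as follows. Take a small rational polytope $P\ni B$ of klt boundaries. Write each point of $P$ as a convex combination of the vertices $\Gamma_\ell$. Any ray $R$ with $(K_X+\Gamma)\cdot R<0$ for some $\Gamma$ near $B$ has $(K_X+\Gamma_\ell)\cdot R<0$ for some vertex $\ell$, and is therefore one of the finitely many such rays with bounded length. Rationality of the relevant rays follows from the cone theorem.

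\emph{Step 3 (degree condition).} The constraint $\deg f_*(\sum c_iB_i)=\sum c_i\deg f_*B_i\le k$ is one more rational linear inequality; exceptional $B_i$ contribute $0$.

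\emph{Conclusion.} Intersecting Steps 1--3 within a small neighbourhood yields a rational polyhedron containing $B$, all of whose real points have the required properties. I will take the smallest rational affine subspace containing $B$ (inside $\mathcal L$) and then the minimal face containing $B$. Then $B$ lies in the relative interior of a rational polytope $Q$ of positive size, so rational points of $Q$ converge to $B$. Every such point has support contained in $\Supp B$. To keep the support equal to $\Supp B$, I will shrink $Q$ so that every $c_i>0$, which is possible since each $b_i>0$.
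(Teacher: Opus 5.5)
Your overall plan is the same as the paper's. Both use a rational affine locus where $K_X+\sum c_iB_i$ is $\R$-Cartier, a rational polytope for the lc and $f$-nef conditions, the inequalities $\le1-\tau$ on one fixed log resolution, the degree inequality, rational lower bounds keeping $c_i>0$, and density of rational points. The paper gets the nef condition by citing Fujino's theorem that, on normal and not necessarily $\Q$-factorial $X$, lc boundaries with $f$-nef log canonical divisor form a rational polytope.

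You try to prove this yourself, and the step you flag as the main obstacle fails as justified. From $(K_X+\Gamma_\ell)\cdot R<0$ you conclude that $R$ is one of \emph{finitely many} rays ``with bounded length''. The cone theorem only gives countably many $(K_X+\Gamma_\ell)$-negative extremal rays, locally discrete away from $(K_X+\Gamma_\ell)^\perp$. The length bound does not make them finite. For example, the blowup $S$ of $\Pj^2$ at nine very general points has infinitely many $(-1)$-curves, spanning infinitely many $K_S$-negative extremal rays, each of length one. So, as written, you do not have finitely many rational inequalities describing $f$-nefness on $P$.

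In this setting the gap can be closed because $f$ is birational. Take an $f$-ample Cartier divisor $A$ on $X$. By the negativity lemma, $G:=f^*f_*A-A\ge0$, and $G+A=f^*f_*A$, so
\[
 K_X+\Gamma_\ell\equiv_Y K_X+(\Gamma_\ell+\delta G)+\delta A .
\]
For small $\delta>0$ the pair $(X,\Gamma_\ell+\delta G)$ is klt and $\delta A$ is $f$-ample. The cone theorem then gives only finitely many $(K_X+\Gamma_\ell)$-negative extremal rays over $Y$.

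With this, your reduction works. If $K_X+\Gamma$ is not $f$-nef for some $\Gamma\in P$, the cone theorem for $(X,\Gamma)$ gives a $(K_X+\Gamma)$-negative extremal ray. That ray is negative for some vertex $\Gamma_\ell$, so it lies in your finite list. Each ray in the list is spanned by a rational curve, so the inequalities are rational affine on $\mathcal L$. Alternatively, cite the rational polytope theorem as the paper does.

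Two smaller points. Klt-ness near $B$ comes from $\eps>0$, not from a strict inequality $\eps>\tau$, since $\tau=\eps$ is allowed. The minimal-face step is unnecessary, because rational points are dense in any rational polytope containing $B$.
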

\begin{proof}
Use the rational affine coefficient space generated by the finitely many components of $B$. The locus of coefficients for which $K_X+B'$ is $\R$-Cartier is rational affine. Within this locus, the boundaries $B'$ for which $(X,B')$ is lc and $K_X+B'$ is $f$-nef form a rational polytope; this holds when $X$ is normal without $\Q$-factoriality by \cite[Theorem~5.2.3]{FujinoCC24FullPolytopeProof}, equivalently \cite[Theorem~10.8(3)]{FujinoQL22RationalNefPolytope}. For the projective algebraic morphism, the analytic formulation applies with the compact set $Y(\C)$: the numerical space is finite dimensional, and algebraic and analytic curve nefness agree.

On a fixed log resolution $p\colon W\to X$, write $K_W+B'_W=p^*(K_X+B')$. The coefficients of $B'_W$ are rational affine functions of the coefficients of $B'$ on the affine locus where $K_X+B'$ is $\R$-Cartier. Intersect the polytope with the inequalities that these coefficients are at most $1-\tau$, with nonnegativity, and with the rational linear inequality $\deg f_*B'\le k$. For each component of $\Supp B$, also impose a positive rational lower bound smaller than its coefficient in $B$. The resulting rational polytope contains $B$, all its boundaries have exactly the same support as $B$, and \eqref{eq:snc} gives the required lower bound at every valuation. Rational points are dense in it. A $\Q$-Weil divisor that is $\R$-Cartier is $\Q$-Cartier: after clearing its denominators, its class in each local divisor class group vanishes after tensoring with $\R$, hence also after tensoring with $\Q$, and is therefore torsion. Cartierness holds on a neighborhood of each point, and a finite open cover gives a common Cartier multiple. This proves the assertion.
\end{proof}

\begin{lem}[Projective degree estimate]\label{lem:degree}
For an effective $\R$-divisor $D$ on $\Pj^r$ and a divisorial valuation $v$,
\[
 v(D)\le(\deg D)A(v,\Pj^r,0).
\]
\end{lem}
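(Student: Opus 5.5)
Write $D=\sum_i d_iD_i$ with $d_i>0$ and $D_i$ prime of degree $m_i$. Both sides are linear in $D$, and $v(D)=\sum_i d_iv(D_i)$, so it suffices to treat a single prime (or any reduced effective) divisor $D$ of degree $m$. Both sides also scale linearly under $v\mapsto cv$, so we may assume $v=\ord_E$ is normalized. The plan is to reduce to the standard fact that a hypersurface of degree $m$ has multiplicity at most $m$, and then propagate along a chain of blowups.

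For the reduction, choose a sequence of blowups $Y_0=\Pj^r\leftarrow Y_1\leftarrow\cdots\leftarrow Y_N$ of smooth irreducible centers $Z_j\subset Y_j$, each being the center of $v$ on $Y_j$ with $\operatorname{codim}Z_j=c_j\ge2$. By Zariski's lemma this eventually extracts $E$, so that $E$ is the exceptional divisor of the last blowup. For a blowup $\sigma\colon Y_{j+1}\to Y_j$ along $Z_j$ with exceptional divisor $F_j$, we have
\[
 K_{Y_{j+1}}=\sigma^*K_{Y_j}+(c_j-1)F_j,\qquad
 \sigma^*D_j=D_{j+1}+\operatorname{mult}_{Z_j}(D_j)F_j,
\]
where $D_j$ is the pullback of $D$ to $Y_j$. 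To keep the induction clean, rather than tracking strict transforms, I will use the pair $(Y_j,\,s\,\Gamma_j)$ with $s=\deg D$ and $\Gamma_j$ the crepant transform. It is more convenient, however, to argue directly by the Izumi-type estimate below.

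\emph{Key step (Izumi-type estimate).} At the generic point of the center $c_Y(v)$, pick a general point $y$ and a linear projection. For a monomial valuation in local coordinates, $v(D)\le m\cdot\max_i v(z_i)\le m\sum_i v(z_i)=m\,A(v)$. This follows from \eqref{eq:monomial-discrepancy} together with the estimate $\ord_y(D)\le m$ for a degree $m$ hypersurface, obtained by restricting to a general line through $y$. For general divisorial $v$, I would use the inequality $v(\frakj)\le \ord_{y}(\frakj)\cdot A(v,\Pj^r,0)$ in the following form. Set $c=\operatorname{mult}_{c_Y(v)}D\le m$. Then $(\Pj^r,\tfrac1c D)$ is lc near the generic point of the center, since a divisor of multiplicity $c$ at a smooth point has $\lct\ge 1/c$ by the standard inequality $\lct\ge1/\operatorname{mult}$. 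This gives $A(v,\Pj^r,\tfrac1cD)\ge0$, that is, $v(D)\le c\,A(v,\Pj^r,0)\le m\,A(v,\Pj^r,0)$.

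So the proof becomes: (1) reduce to $D$ effective and $v$ normalized, as above; (2) note $\operatorname{mult}_x D\le \deg D$ at every point $x$, by intersecting with a general line through $x$ not contained in $\Supp D$; (3) invoke $\lct_x(\Pj^r;D)\ge 1/\operatorname{mult}_xD$, which follows from the blowup estimate $A(\ord_{\mathrm{exc}})=r\ge1$ together with inversion/induction via \eqref{eq:snc} on a log resolution, or by citing Lazarsfeld's \cite{Laz04II} Proposition 9.5.13. Here it is applied at a general point $x$ of $c_Y(v)$ after scaling $D$ by $1/\deg D$ (then $\operatorname{mult}\le1$). This yields $A(v,\Pj^r,D/\deg D)\ge0$, which is the claim. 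Since the inequality is linear in $D$, real coefficients cause no issue. The main obstacle is step (3) when $\operatorname{mult}_xD=\deg D$ exactly and $\Supp D$ is not SNC. Here I rely on the cited multiplicity bound for lct, whose proof reduces to the case of a divisor with multiplicity $\le1$ at a smooth point being lc, via restriction to a general hyperplane and inversion of adjunction.
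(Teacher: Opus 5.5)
Your proof is correct, but it takes a different route from the paper's. You reduce to a prime hypersurface $D$ of degree $m$ and bound $\operatorname{mult}_xD\le m$ at a point $x$ of the center of $v$, using a general line through $x$. You then invoke the standard bound $\lct_x(\Pj^r;D)\ge1/\operatorname{mult}_xD$ (Lazarsfeld, Proposition~9.5.13, whose proof uses the restriction theorem for multiplier ideals, i.e.\ inversion of adjunction).

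The paper instead proves the differentiation inequality $v(\partial_ih)\ge v(h)-A(v,U,0)$ directly from the Jacobian on a model carrying $E$. It then applies this inequality $\deg h$ times along a highest-degree monomial until it reaches a nonzero constant. That argument is elementary and self-contained.

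Your route gives the sharper local inequality $v(D)\le\operatorname{mult}_{c_Y(v)}(D)\,A(v,\Pj^r,0)$, at the cost of a nontrivial citation. The paper's inequality also yields that local bound: apply it with $\operatorname{ord}_xh$ derivatives chosen so the result does not vanish at $x$. So the paper's argument is in effect a from-scratch proof of the lemma you cite.

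A few clean-ups:
\begin{itemize}
\item The opening paragraph on blowup chains and crepant transforms is never used and should be removed.
\item Your sketched alternative proof of step (3), via the point blowup and \eqref{eq:snc}, is not a proof. Rely on the citation instead.
\item The case $\operatorname{mult}_xD=\deg D$ is not an obstacle. Lazarsfeld's statement makes $(\Pj^r,tD)$ klt near $x$ for every $t<1/\operatorname{mult}_xD$. Since $x$ lies in the center of $v$, this gives $A(v,\Pj^r,tD)>0$. This expression is affine in $t$, so you may let $t\to1/\operatorname{mult}_xD$.
\item The case $\operatorname{mult}=0$ is trivial, since then $v(D)=0$.
\end{itemize}
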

\begin{proof}
On a standard affine chart $U=\mathbb A^r$ containing the generic center of $v$, a prime hypersurface has a nonzero polynomial equation $h$ of degree at most its projective degree. Let $x_1,\ldots,x_r$ be the coordinates on $U$, write $\partial_i=\partial/\partial x_i$, and put $A=A(v,U,0)$. We first justify the differentiation inequality
\[
 v(\partial_i h)\ge v(h)-A.
\]

Write $v=\ord_E$ and choose a smooth birational model $p\colon W\to U$ containing $E$. Near a general point of $E$, choose local coordinates $t,z_2,\ldots,z_r$ with $E=(t=0)$. The Jacobian determinant $J$ is defined by
\[
 p^*(dx_1\wedge\cdots\wedge dx_r)
 =J\,dt\wedge dz_2\wedge\cdots\wedge dz_r.
\]
Its order along $E$ is the coefficient of $K_W-p^*K_U$, hence $\ord_E(J)=A-1$ by the definition of log discrepancy.

Set $m=v(h)$ and write $p^*h=t^m u$, where $u$ is a regular unit near the generic point of $E$. Then
\[
 d(p^*h)=m t^{m-1}u\,dt+t^mdu,
\]
so every coefficient of this differential has order at least $m-1$ along $E$. All the differentials $p^*dx_j$ are regular. Consequently, replacing $dx_i$ by $dh$ in the top form and pulling back gives a form of order at least $m-1$. On the other hand,
\[
 dx_1\wedge\cdots\wedge dx_{i-1}\wedge dh
 \wedge dx_{i+1}\wedge\cdots\wedge dx_r
 =(\partial_i h)\,dx_1\wedge\cdots\wedge dx_r.
\]
Indeed, in $dh=\sum_j(\partial_jh)dx_j$, every term with $j\ne i$ repeats a differential and has zero wedge product. The order of the pulled-back right-hand side is $v(\partial_i h)+A-1$. Thus
\[
 v(\partial_i h)+A-1\ge m-1,
\]
which is the claimed inequality. If $\partial_i h=0$, it holds with the convention $v(0)=+\infty$.

Choose a monomial of highest total degree in $h$ and differentiate according to its exponents. The resulting iterated partial derivative is a nonzero constant, since the ground field is $\C$. Iterating the inequality at most $\deg h$ times gives $0\ge v(h)-(\deg h)A$, hence $v(h)\le(\deg h)A$. Summing over the prime components with the nonnegative real coefficients of $D$ proves the claim.
\end{proof}

\begin{prop}[An auxiliary sub-pair on the target]\label{prop:signed-target}
In Setup~\ref{setup:source}, assume $n\ge2$ and put $c=k+n$. For any finite collection $\mathcal V$ of divisorial valuations and any $u>0$, there is an effective $\R$-divisor $N$ on $Y$, with $\deg N=c$, no prime component in common with $\Delta$, and positive coefficients at most $u$, such that $(Y,\Theta)$ is sub-$\eps$-lc for $\Theta=\Delta-N$, and
\[
 A(v,Y,\Theta)\ge A(v,X,B)
\]
for all $v$, with equality at every $v\in\mathcal V$ and every $f$-exceptional prime divisor on $X$. If $B$ is a $\Q$-boundary, $N$ may be chosen as a $\Q$-divisor, namely a single prime hypersurface divided by an arbitrarily large sufficiently divisible integer.
\end{prop}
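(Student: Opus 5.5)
The plan is to produce $N$ as the pushforward of a general effective divisor $G$ on $X$ that is $\R$-linearly equivalent to $L+f^*((2n+1)H)$. The key identity is the following. Suppose $G\sim_\R L+(2n+1)f^*H$ and put $N=f_*G$. Then $N\sim_\R K_Y+\Delta+(2n+1)H$, and $f^*N-G\sim_\R f^*(K_Y+\Delta)-L=F$. This difference is $f$-exceptional, has pushforward $0$, and is $\R$-linearly trivial over $Y$ up to an $f$-exceptional divisor. Comparing pushforwards and applying the negativity lemma in both directions gives the equality of divisors $f^*N=G+F$. Combined with Lemma~\ref{lem:comparison}, for every $v$,
\[
A(v,Y,\Theta)=A(v,Y,\Delta)+v(N)=A(v,X,B)-v(F)+v(G)+v(F)=A(v,X,B)+v(G)\ge A(v,X,B),
\]
with equality exactly when the center of $v$ on $X$ is not contained in $\Supp G$. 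The degree is $\deg N=-(n+1)+k+(2n+1)=k+n=c$, as required. Sub-$\eps$-lc of $(Y,\Theta)$ follows because $A(v,Y,\Theta)\ge A(v,X,B)\ge\eps$.

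It remains to make $G$ general, which needs semiampleness of $M=L+(2n+1)f^*H$. First I show that $L+2nf^*H$ is nef. Since $(X,B)$ is klt, the cone theorem applies: every $(K_X+B)$-negative extremal ray is spanned by a rational curve $C$ with $-(K_X+B)\cdot C\le2n$. If $f$ contracts $C$, then $L\cdot C\ge0$ by $f$-nefness. Otherwise $f^*H\cdot C\ge1$, so $(L+2nf^*H)\cdot C\ge0$. On the $(K_X+B)$-nonnegative part of the cone both summands are nonnegative.

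Next, $M-(K_X+B)=(2n+1)f^*H$ is nef and big. Moreover $M-(K_X+B)-\epsilon'(L+2nf^*H)$ is still nef and big for small $\epsilon'$, up to rescaling, so the basepoint free theorem applies to $M$ once $B$ is rational. For a real $B$, I would use Lemma~\ref{lem:rational}. It writes $B=\sum r_iB_i$ as a convex combination of $\Q$-boundaries $B_i$ with the same support, such that each $(X,B_i)$ is klt, each $K_X+B_i$ is $\Q$-Cartier and $f$-nef, and each $\deg f_*B_i\le k$. Then $M=\sum r_iM_i$ with every $M_i$ semiample, and I take $G=\sum r_iG_i$.

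Now choose $G_i=\frac1mD_i$, where $D_i\in|mM_i|$ is a general member of the basepoint free system and $m$ is large and divisible. Generality ensures that $D_i$ contains none of the finitely many centers on $X$ of the $v\in\mathcal V$. It also contains none of the $f$-exceptional prime divisors, none of the components of $f^{-1}_*\Delta$, and no other proper subvariety we wish to avoid. Hence $v(G)=0$ for these $v$, which gives the required equalities, and $N$ shares no component with $\Delta$. Taking $m\ge 1/u$ makes all coefficients at most $u$. In the $\Q$-case, $mM$ is big and basepoint free with $\dim X\ge2$, so Bertini makes a general $D$ irreducible and reduced. Its image $f_*D$ is then a single prime hypersurface, and $N=\frac1mf_*D$.

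The step I expect to be the main obstacle is the nefness and semiampleness of $L+(2n+1)f^*H$ uniformly in $X$. This is where the length bound $2n$ for extremal rays and the exact constant $c=k+n$ enter. The real-coefficient case also needs care, which is why I route it through Lemma~\ref{lem:rational} rather than through an $\R$-version of the basepoint free theorem.
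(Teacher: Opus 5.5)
Your strategy is the paper's: the length bound $2n$ in the cone theorem makes $L+2nf^*H$ nef, and the base point free theorem makes the relevant divisor semiample (vertex by vertex via Lemma~\ref{lem:rational} in the real case). You then push forward a general member $G$ that avoids the prescribed centers, the $f$-exceptional divisors and the strict transforms of the components of $\Delta$, and the identity $f^*N=F+G$ gives $A(v,Y,\Delta-N)=A(v,X,B)+v(G)$.

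There is, however, a concrete error in your degree count. From $G\sim_\R L+(2n+1)f^*H$ you get $N\sim_\R K_Y+\Delta+(2n+1)H$, so
\[
\deg N=-(n+1)+d+(2n+1)=d+n,\qquad d=\deg\Delta .
\]
You substituted $k$ for $d$, but the hypothesis is only $d\le k$. So $\deg N=c=k+n$ holds only when $d=k$, and the required equality fails otherwise. The real case has the same problem: it gives $\sum r_i(d_i+n)=d+n$.

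The fix is to add the nef divisor $(k-d)f^*H$. Take $G\sim_\R L+(k+2n+1-d)f^*H$, which is exactly the paper's $T=cf^*H-F$, written there as $T_0+f^*H$ with $T_0=L+2nf^*H+(k-d)f^*H$. This divisor is still nef, and $T-L=(k+2n+1-d)f^*H$ is still nef and big. It is also $\Q$-Cartier when $B$ is a $\Q$-boundary. So your base point free argument goes through unchanged, and pushing forward gives $N\sim_\R cH$.

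In the real case, use $M_i=K_X+B_i+(k+2n+1-d_i)f^*H\sim_\Q cf^*H-F_i$, where $d_i=\deg f_*B_i\le k$. Since $F_i$ is affine-linear in $B_i$ and $\sum r_i=1$, this gives $\sum r_iM_i=cf^*H-F$, hence $\deg N=c$.

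Separately, your claim that $M-(K_X+B)-\epsilon'(L+2nf^*H)$ stays nef for small $\epsilon'$ is false in general. On an $f$-contracted curve $C$ its degree is $-\epsilon' L\cdot C$, which is negative whenever $L\cdot C>0$. The claim is also unnecessary: the base point free theorem only needs $M$ nef and $M-(K_X+B)$ nef and big, which you had already established.
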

\begin{proof}
The cone theorem, in its form for normal pairs with $\R$-boundaries \cite[Theorems~1.1 and~18.10]{Fujino2011}, gives $L+2n f^*H$ nef. On an $L$-negative extremal ray its generating rational curve has $-L$-degree at most $2n$, and its $f^*H$-degree is a positive integer, since $L$ is relatively nef. Consequently
\[
 T_0=(k+n-1)f^*H-F=L+2nf^*H+(k-d)f^*H
\]
is nef. Thus $T=T_0+f^*H$ is nef and big.

Suppose first that $B$ is a $\Q$-boundary. Then $T$ is $\Q$-Cartier and
$T-L\sim_\Q(k+2n+1-d)f^*H$ is nef and big. The base point free theorem for klt pairs implies that a sufficiently divisible multiple of $T$ is globally generated; see \cite[Theorem~2.1]{Masamura23BPF}. Take $q$ sufficiently divisible so that $|qT|$ is the pullback of a very ample system on its normal semiample contraction. This contraction is birational because $T$ is big. On a smooth resolution of $X$, the pulled system has empty base locus. In its smooth isomorphism locus, two hyperplanes with independent differentials have a codimension-two intersection component; a third hyperplane avoids a chosen point of that component. Since $n\ge2$, \cite[Tag~0G4F]{StacksBertiniSlice} therefore makes the general member geometrically irreducible. Lemma~\ref{lem:free-members} makes it smooth and ensures that it contains none of the prescribed valuation centers, none of the strict transforms of the boundary components or of the $f$-exceptional prime divisors, and no exceptional prime divisor of the resolution. Its pushforward $D_X\in|qT|$ is thus a single prime divisor of multiplicity one. As an effective Cartier divisor on normal $X$, it is $S_1$ and generically reduced, hence reduced by \cite[Tags~0B3S and~0344]{StacksFoundations}. Thus $D_X$ is integral and contains none of the prescribed centers or divisor components. Put $M=D_X/q$ and $N=f_*M$. Then $N=D/q$ for a prime target hypersurface $D$. The equality $M\sim_\Q cf^*H-F$, after writing the linear equivalence as a sum of principal divisors and pushing to $Y$, gives the exact identity
\begin{equation}\label{eq:negative-realization}
 f^*N=F+M,\qquad \deg N=c.
\end{equation}
No prime component of $M$ is $f$-exceptional, by the choice of the general member. We spell out the role of the finite set $\mathcal V$ in this choice. For each $v\in\mathcal V$, let $Z_v$ be its center on $X$. The members of $|qT|$ containing $Z_v$ form a proper closed linear subspace, since $|qT|$ is base point free. As $\mathcal V$ is finite, the complement of the union of these subspaces is a nonempty open subset of $|qT|$. Thus the general member $D_X$ above can be chosen to contain none of the $Z_v$, simultaneously with the other conditions. Its pullback to the resolution then contains none of the centers of these valuations there. The local equation of $D_X$ is a unit at the generic point of each $Z_v$, and hence
\[
 v(M)=\frac{v(D_X)}q=0\qquad\text{for every }v\in\mathcal V.
\]
This simultaneous choice is where finiteness of $\mathcal V$ is used. Taking $q$ larger gives $1/q\le u$.

For an $\R$-boundary $B$, express it as a positive convex combination $\sum\lambda_i B_i$ of rational vertices in Lemma~\ref{lem:rational}, with any fixed positive rational lower bound for the log discrepancies. The associated corrections satisfy $F=\sum\lambda_iF_i$. Each $T_i=cf^*H-F_i$ is nef, big and semiample by the rational argument. Choose general members in sufficiently large multiples so that they contain none of the prescribed centers or divisor components and their images in $Y$ are pairwise distinct. Their weighted sum gives an effective $\R$-divisor $M\sim_\R cf^*H-F$ with arbitrarily small coefficients and $v(M)=0$ for every $v\in\mathcal V$, since each chosen member has value zero at all these valuations. Identity \eqref{eq:negative-realization} follows by pushing its $\R$-linear equivalence, exactly as above. Finally
\[
 A(v,Y,\Delta-N)=A(v,X,B)+v(M)\ge A(v,X,B).
\]
This proves the lower bound $\eps$ for log discrepancies, although the rational approximation used a smaller lower bound. For every $v\in\mathcal V$, the equality $v(M)=0$ makes the displayed discrepancy inequality an equality. The same holds for the valuation of every $f$-exceptional prime divisor on $X$, because $M$ has no such prime component.
\end{proof}

\begin{prop}[A relatively nef model with prescribed divisors]\label{prop:effective}
Let $S$ be smooth projective of dimension at least two, let $C=P-N$, where $P,N\ge0$ have no prime component in common, and assume $(S,C)$ is sub-$\eta$-lc, where $0<\eta\le1$. Any finite collection of divisorial valuations $E_i$ with centers of codimension at least two on $S$ and with $A(E_i,S,C)\le1$ is represented on a projective birational morphism $g\colon V\to S$, where $V$ is $\Q$-factorial and carries an effective $\eta$-lc boundary $C_V$ with $\R$-Cartier log canonical divisor $K_V+C_V$ nef over $S$, $g_*C_V=P$, and
$A(E_i,V,C_V)=A(E_i,S,C)$. Moreover
\begin{equation}\label{eq:effective-comparison}
 K_V+C_V=g^*(K_S+C)+G_V,\qquad G_V\ge0.
\end{equation}
\end{prop}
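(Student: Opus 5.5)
The plan is to construct $V$ as the output of a relative minimal model program over $S$, run on a log resolution that extracts the $E_i$. The boundary on that resolution is chosen so that its difference from the crepant pullback of $K_S+C$ is effective and vanishes along each $E_i$.

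\textbf{Setting up the boundary.} Let $h\colon W\to S$ be a projective log resolution of $(S,\Supp C)$ on which every $E_i$ appears as a prime divisor; it exists by \cite[Theorems~26--27]{Kol05}. Each $E_i$ is $h$-exceptional because its center has codimension at least two. Write $K_W+C_W=h^*(K_S+C)$. Define $B_W$ as the strict transform of $P$ plus the following $h$-exceptional terms:
\begin{itemize}
\item coefficient $1-A(E_i,S,C)$ along each $E_i$;
\item coefficient $1-\eta$ along every other $h$-exceptional prime.
\end{itemize}
Sub-$\eta$-lc gives $A(E_i,S,C)\ge\eta$, and the hypothesis gives $A(E_i,S,C)\le1$. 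Hence every coefficient of $B_W$ lies in $[0,1-\eta]$, and the log smooth pair $(W,B_W)$ is $\eta$-lc by \eqref{eq:snc}. It is also klt: if $\eta=1$, then $B_W=0$. Moreover
\[
 K_W+B_W=h^*(K_S+C)+\widetilde N+E',
\]
where the terms are as follows.
\begin{itemize}
\item $\widetilde N$ is the strict transform of $N$.
\item $E'\ge0$ is $h$-exceptional. Its coefficient on a non-$E_i$ exceptional prime is $(1-\eta)-\operatorname{coeff}C_W\ge0$, again by sub-$\eta$-lc of $(S,C)$.
\item $\widetilde N+E'$ has coefficient exactly zero along every $E_i$.
\end{itemize}

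\textbf{Running the MMP.} Run a $(K_W+B_W)$-MMP over $S$ with scaling of an ample divisor. Since $h$ is birational, $K_W+B_W$ is big over $S$, and BCHM \cite{BCHM10} gives termination with a $\Q$-factorial model $g\colon V\to S$ on which $K_V+B_V$ is nef over $S$. Put $C_V=B_V$; then $g_*C_V=P$. Log discrepancies do not decrease along the MMP, so $(V,C_V)$ is $\eta$-lc. Pushing the displayed identity to $V$ gives
\[
 K_V+C_V=g^*(K_S+C)+G_V,
\]
where $G_V$ is the birational pushforward of the effective divisor $\widetilde N+E'$. Hence $G_V\ge0$, which is \eqref{eq:effective-comparison}.

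\textbf{The main obstacle: the $E_i$ survive.} The key step is to show that no $E_i$ is contracted by the MMP. Take a common resolution $p\colon W'\to W$, $q\colon W'\to V$. The standard MMP comparison gives
\[
 p^*(K_W+B_W)=q^*(K_V+B_V)+\Phi,
\]
with $\Phi\ge0$, and $\Phi$ is strictly positive along every divisor contracted by $W\dashrightarrow V$. Subtracting the pullbacks of $g^*(K_S+C)$ yields
\[
 p^*(\widetilde N+E')=q^*G_V+\Phi.
\]
At a contracted divisor $D\subset W$, the left side has coefficient $\operatorname{coeff}_D(\widetilde N+E')$, while the right side is at least $\Phi_D>0$ because $G_V\ge0$. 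Thus every contracted divisor lies in $\Supp(\widetilde N+E')$, which does not contain any $E_i$. So each $E_i$ survives on $V$, and its coefficient in $C_V$ equals its coefficient $1-A(E_i,S,C)$ in $B_W$. This gives $A(E_i,V,C_V)=A(E_i,S,C)$.
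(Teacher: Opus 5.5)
Your proof is correct and follows the paper's strategy. You extract the $E_i$ on a log resolution, take an effective $\eta$-lc SNC boundary whose difference from the crepant pullback of $K_S+C$ is effective and vanishes along the $E_i$, and run the relative MMP over $S$ via BCHM.

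The differences are minor. The paper takes the positive part $C_W^+$ as the boundary, so its correction divisor is $C_W^-$, whereas you put coefficient $1-\eta$ on the other exceptional primes. The paper also proves survival of each $E_i$ step by step, using a contracted curve through a general point of $E_i$. Your identity $p^*(\widetilde N+E')=q^*G_V+\Phi$, with $\Phi>0$ on contracted divisors, handles the whole program at once.
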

\begin{proof}
On a log resolution representing the selected valuations, write $K_W+C_W=p^*(K_S+C)$ and $C_W=\Gamma-G$ for its positive and negative parts. The effective SNC boundary $\Gamma$ is $\eta$-lc by \eqref{eq:snc}. We have $K_W+\Gamma=p^*(K_S+C)+G$ with $G\ge0$. Run a terminating relative log minimal model program for the effective klt pair $(W,\Gamma)$ over $S$, with scaling of a suitable effective relatively ample divisor chosen so that the initial scaled pair is klt and its log canonical divisor relatively nef. Since $W\to S$ is birational, termination follows from \cite[Theorem~1.2, Corollary~1.4.2]{BCHM10}; the corresponding existence statement is also \cite[Theorem~10.3]{Fujino2011}. Relative bigness is automatic, and the displayed effective representative gives relative pseudo-effectivity.

A selected $E_i$ has coefficient zero in $G$, since its coefficient in $C_W$ is $1-A(E_i,S,C)\ge0$. At every intermediate model the same divisor identity holds for the transforms of $\Gamma$ and $G$: take strict transforms under flips and pushforwards under divisorial contractions. A flip does not contract any prime divisor. If a divisorial contraction removed $E_i$, a contracted curve through its general point would not be contained in $\Supp G$ and would have nonnegative $G$-degree, since $G\ge0$. The log canonical divisor has this same degree, contradicting negativity of the contracted extremal ray. Thus every selected prime survives. Discrepancies increase along the program, while the coefficient of each surviving prime is unchanged. The final boundary is effective and $\eta$-lc, its pushforward is $P$, and its identity is \eqref{eq:effective-comparison}. The effective divisor $G_V$ may contain the strict transform of $N$; it need not be exceptional.
\end{proof}

\section{Projection and uniform ramification}\label{sec:ramification}

Under projection to $\Pj^{n-1}$, the valuation of a prime divisor on $X$ contracted by $f$ restricts to a positive integral multiple of a divisorial valuation. We bound this multiple uniformly.

\begin{prop}[A uniform bound under projection]\label{prop:ramification}
In Setup~\ref{setup:source}, assume $n\ge2$. Project from a point $q$ to $Z=\Pj^{n-1}$. Let $E_q$ be the exceptional divisor of the ordinary blowup at $q$, and set
\[
 S=k+\max\{0,1-A(E_q,X,B)\},\quad M=\lceil2S/\eps\rceil,\quad \delta=\eps/(2M).
\]
Let $v$ be a normalized divisorial valuation of $K(Y)=K(X)$ whose restriction to the subfield $K(Z)$ is nontrivial. Then $v|_{K(Z)}=e\zeta$ for a positive integer $e$ and a normalized divisorial valuation $\zeta$ of $K(Z)$. These valuations satisfy
\begin{equation}\label{eq:ramification}
 A(v,X,B)\ge\delta e.
\end{equation}
Here $S\le k+1-\eps$. If $q$ belongs to the isomorphism locus of $f$ and avoids $\Supp\Delta$, the valuation of every prime divisor on $X$ contracted by $f$ has nontrivial restriction, and one can use $S=k$. In that case the integer $e$ is at most
\[
 R(k,\eps)=\left\lfloor\frac{2\lceil2k/\eps\rceil}{\eps}\right\rfloor.
\]
\end{prop}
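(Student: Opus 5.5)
The plan is to reduce to the case of large $e$ and then use the root-contact description of Notation~\ref{defn:contacts} along a one-parameter family of monomial valuations through $v$. The key point is that the discrepancy function along this family is convex with slopes bounded by $S$. Since $A(v,X,B)\ge\eps$ always holds, \eqref{eq:ramification} is automatic when $e\le 2M$, because then $\delta e\le\eps$. So the work is for $e>2M$.

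\emph{Divisoriality of $\zeta$.} The restriction $v|_{K(Z)}$ is nontrivial with value group contained in $\mathbb Z$, so it equals $e\zeta$ with $\zeta$ normalized discrete. Its residue field $\kappa(\zeta)\subseteq\kappa(v)$ has transcendence degree at least $n-2$, since $\trdeg\kappa(v)=n-1$ and $K(Y)/K(Z)$ has transcendence degree one. Abhyankar's inequality then gives $\trdeg\kappa(\zeta)=n-2$, hence $\zeta$ is divisorial. Fix the geometric DVR $R$ of Notation~\ref{defn:directions} and the chart $\Pj^1_R$. Then $v$ is a vertical valuation on $\Pj^1_R$ with $v(\varpi)=e$.

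\emph{Reduction to the surface over $R$.} By Lemma~\ref{lem:comparison}, $A(v,X,B)\ge A(v,Y,\Delta)$. On $\Bl_qY$ the crepant boundary is $\Delta_{\Bl}+(1-A(E_q,X,B))E_q$. I would instead use the auxiliary sub-pair of Proposition~\ref{prop:signed-target}, with $v$ included in $\mathcal V$. This keeps equality at $v$ while retaining $\eps$-lc at all valuations, at the cost of negative horizontal coefficients, which only help. The horizontal degree of the boundary on the generic fiber is at most $\deg\Delta+\max\{0,1-A(E_q,X,B)\}\le S$. Over $R$ the pair becomes $(\Pj^1_R,H)$ plus vertical terms pulled back from $Z$, and the base Jacobian contributes nonnegatively, by the smooth-morphism version of \eqref{eq:snc}. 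Passing to the splitting field as in Notation~\ref{defn:contacts} and using monomial valuations $w'_t$ centred at the root nearest to the centre of $v$, one obtains
\[
\frac{A(w_t,\cdot)}{w_t(\varpi)}\ \ge\ \varphi(t):=1+t-\sum_h b_h\min\{t,c_{jh}\}.
\]
This holds with equality up to the base contribution, and $v$ itself is dominated by $w_{t_0}$ for the appropriate $t_0$, with $w_{t_0}(\varpi)$ comparable to $e$.

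\emph{Convexity and lattice argument.} This is the step I expect to be the main obstacle. Since $\min\{t,c\}$ is concave and the positive $b_h$ sum to at most $S$, the function $\varphi$ is piecewise linear with slopes in $[1-S,1]$; on the effective part it is convex. The $\eps$-lc hypothesis, applied to the normalized valuations proportional to $w_t$ for $t\in\frac1m\mathbb Z$, says $\varphi(t)\ge\eps/m'$, where $m'$ is the normalizing denominator. For $t=a/m$ with small $m$ this gives a definite lower bound. Suppose $\varphi(t_0)<\delta$ with $t_0$ having denominator about $e$. Then the slope bound forces $\varphi$ to stay below $\eps/M$ on an interval of length about $\eps/(2S)$ around $t_0$. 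That interval contains a point of $\frac1M\mathbb Z$ (as $M\ge 2S/\eps$), where the bound $\varphi\ge\eps/M$ is violated. The delicate points are two: checking that the normalizing factor at such lattice points is at most $M$ (it comes from the ramification index $r$ and the denominator of $t$), and treating the negative coefficients, where convexity fails but which only raise $\varphi$. Multiplying back by $e$ gives $A(v,X,B)\ge\delta e$.

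\emph{Remaining assertions.} The bound $S\le k+1-\eps$ follows from $A(E_q,X,B)\ge\eps$. If $q$ lies in the isomorphism locus of $f$ and off $\Supp\Delta$, then $A(E_q,X,B)=n\ge1$, so $S=k$. Moreover, every $f$-exceptional centre lies in $Y\setminus\{q\}$ with codimension at least two, so it is not a fibre of the projection. This is because a horizontal-free valuation with trivial restriction must be centred at $q$ or be the divisor $E_q$. Finally, for an $f$-exceptional $E$, Lemma~\ref{lem:comparison} together with $K_X+B$ being $f$-nef and the negativity argument should give $A(E,X,B)\le1$. Then \eqref{eq:ramification} yields $e\le 1/\delta=2M/\eps$, which is the bound $e\le R(k,\eps)$.
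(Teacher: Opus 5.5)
Your proposal has a genuine gap in the ``convexity and lattice'' step, which carries the whole content of the proposition.

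\emph{The gap.} First, the normalizing factor of a test valuation $w_t$ centred at a root $\alpha_j$ is not controlled by the denominator of $t$. It also depends on the ramification of the expansion of $\alpha_j$ up to depth $t$. The ramification index of the splitting field is unbounded in terms of $S$, because components may have tiny coefficients and the negative part has no degree bound. For instance, $\frac1N\operatorname{Div}(x^N-\varpi)$ has degree one. Yet the monomial valuation of weights $(1/N,1)$ in $s=\varpi^{1/N}$, $z=x-s$ (so $t=1$) gives $x$ the value $1/N$, so its normalized restriction has fiber multiplicity $N$. Hence a point of $\frac1M\mathbb Z$ near $t_0$ need not produce a valuation of multiplicity at most $M$.

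Second, even granting small multiplicities, the interval is too short. From $\varphi(t_0)<\delta=\eps/(2M)$ and a Lipschitz bound $S$ you only get $\varphi<\eps/M$ within distance $\eps/(2MS)$ of $t_0$, not $\eps/(2S)$. Such an interval generally contains no point of $\frac1M\mathbb Z$. Also, the upper slope bound $1$ fails once negative coefficients are present, so only moving to the left is controlled.

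Third, note where the difficulty actually lies. If all roots lie in $\widehat R$, all contacts are integers and $\varphi$ is affine between consecutive integers. The bound at integer points then already gives $\varphi\ge\eps$. So the entire difficulty is the ramified, Puiseux-type case, which your sketch leaves open. A smaller point: when $q\in\Supp\Delta$ the closed fiber can carry a positive coefficient and the monic normalization fails, so the constant $1$ in $\varphi$ is not the right one.

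\emph{The missing idea.} What you need is a bound on the positive horizontal weight that improves as the fiber multiplicity grows. The paper obtains it by intersection theory on the regular surface.
\begin{itemize}
\item Represent $v$ by a finite sequence of point blowups of $\Pj^1_R$ at closed points of the closed fiber.
\item For the divisor $E$ created at $P$, flatness of the horizontal strict transforms gives $H^+\cdot\sum_D m_DD=\deg H^+_{\Frac R}\le S$. Together with $i_P(H^+,D)\ge\mu_P^+$, this yields $\mu_P^+\le S/m_E$. In your language: a cluster reached after ramification of order $m$ has at least $m$ Galois-conjugate clusters of equal positive weight.
\item Lemma~\ref{lem:comparison}, applied on spread-out smooth models whose vertical coefficients are $1-A(D,X,B)$, gives the birth inequality $a_E\ge a_D+a_{D'}-\mu_P$ (or $a_D+1-\mu_P$), with $m_E=m_D+m_{D'}$ (or $m_D$).
\item An induction then proves $a_D\ge\delta m_D$, and the stronger $a_D\ge\delta m_D+\eps/2$ once $m_D>M$. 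This works because $\mu_P\le S/m_E<\eps/2$ as soon as $m_E>M$.
\end{itemize}
This argument handles non-monomial valuations uniformly and needs only the one-sided comparison. Your detour through the sub-pair of Proposition~\ref{prop:signed-target} is therefore unnecessary.

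The other parts of your proposal are fine: divisoriality of $\zeta$, $S\le k+1-\eps$, nontriviality of the restriction, and $e\le R(k,\eps)$. For the last one, note that $A(E,X,B)\le1$ is immediate from $B\ge0$.
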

\begin{proof}
A nonzero subgroup of the value group $\mathbb Z$ is $e\mathbb Z$. Normalize the restriction accordingly. Since $K(Y)=K(Z)(x)$, the residue field of $v$ has transcendence degree at most one over the residue field of $\zeta$: algebraically independent residues of two value-zero elements would lift to algebraically independent elements over $K(Z)$, as follows by reducing a relation after dividing by a coefficient of minimum value. Thus the residue field of $\zeta$ has transcendence degree at least $n-2$. On the other hand, the Abhyankar inequality for the nontrivial discrete valuation $\zeta$ gives $\trdeg_\C\kappa(\zeta)\le\trdeg_\C K(Z)-1=n-2$; see \cite[Section~3.2]{JM12}. Hence $\zeta$ is divisorial. For example, lift a residue transcendence basis and adjoin a positive-value element; normalization in the resulting finite field extension represents $\zeta$ as a prime divisor.

Represent $\zeta=\ord_T$ by a prime divisor $T$ on a smooth projective model $\rho\colon Z'\to Z$. Localize at its generic point, obtaining the geometric DVR $R=\cO_{Z',\eta_T}$ with uniformizer $\varpi$ and residue field $F_\zeta$. Set $\widetilde Y=(\Bl_qY)\times_Z Z'$. The projection and its localization fit into the following commutative diagram, whose two left squares are Cartesian:
\begin{equation}\label{diag:ramification-localization}
\begin{tikzcd}[column sep=large,row sep=large]
 \Pj^1_R \arrow[r] \arrow[d] &
 \widetilde Y \arrow[r] \arrow[d] &
 \Bl_qY \arrow[r,"b"] \arrow[d,"p"'] &
 Y \arrow[dl,dashed,"\operatorname{pr}_q"] \\
 \Spec R \arrow[r] & Z' \arrow[r,"\rho"'] & Z &
\end{tikzcd}
\end{equation}
Here $b$ is the blowup at $q$, and $p$ is the resulting $\Pj^1$-bundle. Thus passing to $\Spec R$ replaces the $n$-dimensional projection space by the regular surface $\Pj^1_R$. The divisor $T$ on the base becomes the closed point of $\Spec R$, and valuations restricting to a positive multiple of $\zeta$ become vertical valuations over this surface. The exceptional divisor $E_q$ gives its section at infinity.

The horizontal part of the divisor $B_{\widetilde Y}$ of Notation~\ref{defn:source-reference} has positive part of degree at most $S$ on the generic fiber: a degree-$d$ hypersurface on $Y$ has degree at most $d$ in the fiber coordinate, and the only additional horizontal exceptional divisor is the section at infinity, with coefficient $1-A(E_q,X,B)$. All negative horizontal coefficients are retained.

Starting with $\mathcal S_0=\Pj^1_R$, consider successive blowups $\sigma_i\colon\mathcal S_{i+1}=\Bl_{P_i}\mathcal S_i\to\mathcal S_i$ at closed points $P_i$ of the closed fiber. All these regular surface models lie over the same DVR:
\[
\begin{tikzcd}[column sep=large,row sep=large]
 \mathcal S_{i+1} \arrow[r,"\sigma_i"] \arrow[d] & \mathcal S_i \arrow[d] \\
 \Spec R \arrow[r,equal] & \Spec R.
\end{tikzcd}
\]
Such a finite sequence spreads to smooth projective birational models over $Y$: spread the point ideals, take their closures and blow up, then resolve outside the smooth generic surface. For each vertical component $D$, write $m_D=\ord_D(\varpi)$ and $a_D=A(D,X,B)\ge\eps$. In particular, the closed fiber on $\mathcal S_i$ is the divisor $\sum_Dm_DD=\operatorname{div}_{\mathcal S_i}(\varpi)$, while its reduced support remains SNC. We will prove $a_D\ge\delta m_D$ for every such component; for a component representing $v$, its fiber multiplicity is exactly the restriction index $e$.

On the current surface model over $\Spec R$, let $P$ be the closed point of the closed fiber at which the next blowup is performed. Its residue field $\kappa(P)$ is a finite extension of $F_\zeta$. Exactly one or two irreducible components of the reduced closed fiber pass through $P$; denote them by $D$ or by $D,D'$, respectively. Horizontal components may also pass through $P$, and their multiplicities with the given coefficients enter $\mu_P$ below. Let $E$ denote the exceptional divisor extracted by this blowup, so $a_E=A(E,X,B)$ and $m_E=\ord_E(\varpi)$. Lemma~\ref{lem:comparison} and the canonical divisor formula for a point blowup give
\begin{equation}\label{eq:birth}
 a_E\ge
 \begin{cases}a_D+1-\mu_P,&\text{only }D\text{ passes through }P,\\
 a_D+a_{D'}-\mu_P,&D,D'\text{ pass through }P,\end{cases}
 \qquad
 m_E=\begin{cases}m_D,\\m_D+m_{D'},\end{cases}
\end{equation}
where $\mu_P$ is the multiplicity at $P$ of the strict transform of the horizontal $\R$-divisor, with its given coefficients. Recall that, writing this horizontal divisor as $H=\sum_i b_iH_i$, we have
\[
 \mu_P=\sum_i b_i\operatorname{mult}_P(H_i^{\mathrm{str}}),\qquad
 \mu_P^+=\sum_{b_i>0}b_i\operatorname{mult}_P(H_i^{\mathrm{str}}).
\]
Here $H_i^{\mathrm{str}}$ denotes the strict transform on the current surface model. Thus $\mu_P^+$ is the multiplicity at $P$ of the strict transform of the positive horizontal part $H^+=\sum_{b_i>0}b_iH_i$, and $\mu_P\le\mu_P^+$.

The strict transform of each horizontal prime divisor is finite and flat over $R$, so its special-fiber length equals its generic degree, including residue-field degrees. Consequently
$\sum_Dm_D\deg_{F_\zeta}(H^+|D)\le S$, where $H^+$ is understood to be its strict transform on the current surface. For each fiber branch $D$ through $P$, let $i_P(H^+,D)$ denote the local intersection length, extended linearly in the coefficients of $H^+$. Since $D$ is smooth at $P$, we have $i_P(H^+,D)\ge\mu_P^+$. The contribution of $P$ to the weighted intersection sum $\sum_Dm_D\deg_{F_\zeta}(H^+|D)$ is therefore
\[
 [\kappa(P):F_\zeta]\sum_{D\ni P}m_D\,i_P(H^+,D)
 \ge [\kappa(P):F_\zeta]\Bigl(\sum_{D\ni P}m_D\Bigr)\mu_P^+
 = [\kappa(P):F_\zeta]m_E\mu_P^+,
\]
where the last equality uses \eqref{eq:birth}. All other local contributions are nonnegative, so this contribution is at most $S$. In particular
\[
 \mu_P\le\mu_P^+\le S/m_E.
\]
This estimate is independent of the degree, on the generic fiber, of the negative part of the horizontal divisor.

We show inductively that $a_D\ge\delta m_D$, with the stronger inequality $a_D\ge\delta m_D+\eps/2$ whenever $m_D>M$. For $m_D\le M$, use $a_D\ge\eps$ and $\delta M=\eps/2$. If a blowup produces $m_E>M$, the term $\mu_P$ is at most $\eps/2$. If only one fiber component passes through $P$, its multiplicity is unchanged, so the stronger inequality persists after adding $1-\mu_P$. If two fiber components pass through $P$ and both multiplicities are at most $M$, their total discrepancy is at least $2\eps$ and the new multiplicity is at most $2M$, which gives the stronger inequality after subtraction. If exactly one of their multiplicities exceeds $M$, its stronger inequality and the other component's discrepancy bound $\eps$ absorb the term $\mu_P$ and the at most $\eps/2$ required by the smaller multiplicity. If both multiplicities exceed $M$, the sum of the two stronger inequalities still gives an additional $\eps/2$ after subtracting $\mu_P$. This proves the induction.

Every vertical divisorial valuation of $K(Z)(x)$ is represented by such a finite point-blowup sequence. One way to check finiteness is to choose a value-zero rational function whose residue is transcendental over $F_\zeta$, and principalize the base ideal of the pencil defined by a numerator and denominator of this function. On the resulting regular surface the valuation center maps to the generic point of the closed $\Pj^1_{F_\zeta}$, hence is a fiber divisor. Its DVR is the given valuation ring. Principalization is available by \cite[Theorem~26]{Kol05}; after localization the nontrivial smooth centers are points. Its fiber multiplicity is $v(\varpi)=e$, proving \eqref{eq:ramification}.

For the special choice of $q$, one has $A(E_q,X,B)=n$, so the extra horizontal coefficient is nonpositive. The image of a prime divisor $E$ on $X$ contracted by $f$ has dimension at most $n-2$ and avoids $q$. Its projected image is a proper subset of $Z$, so the restriction is nontrivial. Since $A(E,X,B)\le1$ by effectivity, \eqref{eq:ramification} gives the stated upper bound for $e$.
\end{proof}

\section{Discrepancies under projection}\label{sec:contacts}

We compute $a_\Theta$ and express $I_\Theta$ in terms of coherent ideals on $Z$. Using the ideals generated by the coefficients in auxiliary parameters makes the resulting identity valid simultaneously for all divisorial valuations.

\begin{prop}[A formula for the infimum of discrepancies]\label{prop:contact}
With the local surface, horizontal divisor $H$, and function $I$ of Notation~\ref{defn:contacts}, one has
\begin{equation}\label{eq:local-contact}
 \inf_{w\ \mathrm{vertical}}\frac{A(w,\Pj^1_R,H)}{w(\varpi)}=1-I.
\end{equation}
The infimum is attained and is independent of the chosen extension of $\zeta$ to the splitting field. Adding a nonpositive multiple of the section at infinity does not change it.

For an $\R$-divisor $\Theta$ on $Y=\Pj^n$ with coefficients at most one and $q\notin\Supp\Theta$, the functions $a_\Theta$ and $I_\Theta$ of Notation~\ref{defn:contacts} satisfy
\begin{equation}\label{eq:direction-contact}
 a_\Theta(\zeta)=a_0(\zeta)-I_\Theta(\zeta).
\end{equation}
\end{prop}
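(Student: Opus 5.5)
I would prove the local formula \eqref{eq:local-contact} first and then deduce \eqref{eq:direction-contact} from it by a base-change and Jacobian argument. The local formula has two inequalities. The calculation in Notation~\ref{defn:contacts} already shows that each monomial valuation $w'$ (weights $1/r,t$ in the parameters $s,z=x-\alpha_j$) gives $A(w,S,H)/w(\varpi)=1+t-\sum_h b_h\min\{t,c_{jh}\}$. This requires two justifications. The first is the ramification correction $K_{S'}=g^*K_S+(r-1)F'$. Since $S'\to S$ is the base change of $\Spec R'\to\Spec R$, I would check it via the different of the extension of geometric DVRs; in characteristic zero the extension is tamely ramified, so the different has exponent $r-1$. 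The second is the finite-extension formula \eqref{eq:finite-contact}: for a finite morphism with crepant pullback $K_{S'}+\Gamma'=g^*(K_S+\Gamma)$, one has $A(w',S',\Gamma')=A(w'|_{K(S)},S,\Gamma)$ with compatible scaling. This is the standard Riemann--Hurwitz statement for log discrepancies, and I would apply it on a model where both valuations are divisors. These facts give $\inf\le 1-I$. The maximum over $t$ is attained at one of the finitely many breakpoints $t\in\{0\}\cup\{c_{jh}\}$, which are rational, so the infimum is attained at a genuine divisorial valuation.

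For the reverse inequality $A(w,S,H)/w(\varpi)\ge 1-I$ for every vertical $w$, I would pass to $S'$ and work with the valuation $w'$ extending $w$ with $w'(\varpi)=1$. Set $t:=\max_j w'(x-\alpha_j)$, attained at some $j$, and put $z=x-\alpha_j$. Then $w'(x-\alpha_h)\ge\min\{t,c_{jh}\}$ by the ultrametric inequality, with equality whenever $c_{jh}\ne t$. The case $c_{jh}=t$ also gives equality: otherwise $w'(x-\alpha_h)>t$, contradicting the maximality of $t$ at $j$. If $x$ is not integral at the center of $w'$, the center lies on the section at infinity; there $H$ has coefficient $\le 0$, so the discrepancy is at least the one for $H=0$, which gives $\ge1\ge1-I$.

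Otherwise $w'(H')=\sum_h b_h\min\{t,c_{jh}\}$ is exactly the monomial value. Comparing $w'$ with the monomial valuation $w'_{\mathrm{mon}}$ with weights $(1/r,t)$ in $(s,z)$, I need $A(w',S',0)\ge w'(s)+w'(z)=1/r+t$. This is the inequality \eqref{eq:snc} for the SNC divisor $(sz=0)$; if $t\le 0$ we use $z$ replaced by $1$. Here I am glossing over the case where the center of $w'$ is not at the point $s=z=0$; in that case $t=0$ and $A\ge1/r$ suffices. This yields $A(w',S',H'-(r-1)F')\ge 1+t-\sum_h b_h\min\{t,c_{jh}\}\ge 1-I$. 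Independence of the extension is automatic, since the infimum is intrinsic to $(S,H)$. The statement about adding a nonpositive multiple of the section at infinity follows because the minimizers found above have centers avoiding that section.

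For \eqref{eq:direction-contact}, any $v$ with $v|_{K(Z)}=e\zeta$ has its center on $\widetilde Y$ over the generic point of $T$, so it is a vertical valuation of $\Pj^1_R$ with $v(\varpi)=e$. On $\widetilde Y$, the crepant pullback of $(Y,\Theta)$ restricted to $\Pj^1_R$ is $H_\zeta$ plus $(1-A(E_q,Y,\Theta))=(1-n)$ times the section at infinity, plus the relative canonical correction. That correction is the pullback of $K_{Z'}-\rho^*K_Z$, whose coefficient at $T$ is $A(\zeta,Z,0)-1$, i.e.\ $(A(\zeta,Z,0)-1)F$ on $\Pj^1_R$. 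Hence $A(v,Y,\Theta)/e=A(v,\Pj^1_R,H_\zeta-(n-1)E_q)/v(\varpi)+A(\zeta,Z,0)-1$. Taking the infimum and using the local formula gives $a_\Theta(\zeta)=A(\zeta,Z,0)-I_\Theta(\zeta)$. The case $\Theta=0$ gives $a_0(\zeta)=A(\zeta,Z,0)$, and \eqref{eq:direction-contact} follows. The main obstacle is the lower-bound step: I must make sure the choice of $j$ maximizing $w'(x-\alpha_j)$ handles arbitrary, non-monomial vertical valuations, with negative $b_h$ allowed. The equality $w'(x-\alpha_h)=\min\{t,c_{jh}\}$ is precisely what makes negative coefficients harmless.
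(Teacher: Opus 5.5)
Your proposal is correct and follows essentially the paper's route: monomial valuations in $s$ and $z=x-\alpha_j$ at a rational breakpoint give attainment, the root maximizing $w'(x-\alpha_j)$ together with \eqref{eq:snc} (and the separate section-at-infinity case) gives the lower bound, and the two Jacobian corrections $-(n-1)\Sigma_\infty$ and $-(A(\zeta,Z,0)-1)F_R$ give the global identity. The differences are minor: you cite the tame different and the standard finite-pullback formula where the paper checks them by a direct differential computation; you obtain independence of the extension from the intrinsic left-hand side rather than from Galois conjugacy; and you leave implicit two points the paper does spell out — that the local minimizer is divisorial over $Y$, and therefore admissible in $a_\Theta(\zeta)$, via its transcendence-degree count.
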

\begin{proof}
Monicity makes every root integral; all $c_{jh}$ with $j\ne h$ are finite nonnegative rational numbers. For each root $\alpha_j$, the expression defining $I$ is continuous and piecewise affine in $t$, with break points among the values $c_{jh}$ and final slope $b_j-1\le0$. Its maximum is finite, nonnegative and attained at zero or at one of these rational values.

We first compute the relative canonical divisor under the finite base change. For an extension of geometric DVRs $A\subset B$ with a finite extension of fraction fields, choose uniformizers $t,u$ and write $t=b u^a$ with $b\in B^\times$. Lift a separating transcendence basis of the residue field of $A$ over the ground field. Its rational function field $F$ lies in $A$: every nonzero denominator has nonzero residue. Their fraction fields have transcendence degree one over $F$, and their residue fields are finite separable over $F$. As geometric regular local rings in characteristic zero, they are therefore smooth curve local rings over $F$. Thus $dt$ and $du$ generate their respective cotangent modules by \cite[Tag~0C1E]{StacksFoundations}. Writing $db=c\,du$, with $c\in B$, gives
\[
 dt=u^{a-1}(ab+uc)\,du.
\]
The parenthesized factor is a unit in residue characteristic zero. The differential determinant therefore has order $a-1$; by \cite[Tag~0BWJ]{StacksFoundations} this is the canonical correction. Wedging with the differentials of the chosen lifts of the residue-field transcendence basis gives the same coefficient for top forms over $\C$; the finite separable residue extension contributes no extra vanishing. On valuation models the same computation shows that a finite crepant pullback multiplies log discrepancy by the valuation ramification index.

Take a finite Galois splitting field, a DVR $R'$ above $R$, and its normalized valuation $\zeta'$, with $h=\zeta'(\varpi)$. For an extension $w'$ of $w$, write $w'|\Frac(R)(x)=r w$ and $w'|\Frac(R')=e'\zeta'$. If $e=w(\varpi)$, then $he'=re$. The calculation just given shows that the divisor in the crepant pullback is $H'-(h-1)F'$, where $F'$ is the closed fiber and $w'(F')=e'$. Its discrepancy equals $r$ times the downstairs discrepancy, giving
\begin{equation}\label{eq:finite-contact}
 rA(w,\Pj^1_R,H)=A(w',\Pj^1_{R'},H')+(h-1)e'.
\end{equation}
The pullbacks of the horizontal prime divisors are sums of the sections defined by their roots, with the original coefficients. All divisor orders in \eqref{eq:finite-contact} enter linearly, so the equality remains valid for arbitrary real coefficients.

If there are no roots, then $H=0$ and $I=0$. Applying \eqref{eq:snc} to the smooth closed fiber $F=(\varpi=0)$ gives $A(w,\Pj^1_R,0)\ge w(F)=e$. The valuation $\ord_F$ has discrepancy one and fiber multiplicity one, so it attains the infimum $1$. This value is independent of any splitting-field extension and is unchanged by a nonpositive coefficient at infinity, since $\ord_F$ has order zero there. Thus all the local assertions hold in this case. For the remaining local calculation, assume that there is at least one root. In the affine chart $\mathbb A^1_{R'}=\Spec R'[x]$, complementary to the section at infinity, $x$ is the coordinate along the projection fiber. Choose a root $\alpha_j$ maximizing $u=w'(x-\alpha_j)/(re)$. Recall the valuation inequality
\[
 w'(f+g)\ge\min\{w'(f),w'(g)\},
\]
with equality whenever $w'(f)\ne w'(g)$. The extension of the base valuation used to define $c_{j\ell}$ is $\zeta'/h$. Since $w'|\Frac(R')=e'\zeta'$ and $he'=re$, we have
\[
 \frac{w'(\alpha_j-\alpha_\ell)}{re}
 =\frac{\zeta'(\alpha_j-\alpha_\ell)}h=c_{j\ell}.
\]
Apply the valuation inequality to $x-\alpha_\ell=(x-\alpha_j)+(\alpha_j-\alpha_\ell)$. It gives $w'(x-\alpha_\ell)/(re)\ge\min\{u,c_{j\ell}\}$. If $u\ne c_{j\ell}$, equality follows from the unequal-value case above. If $u=c_{j\ell}$, the inequality gives a lower bound $u$, while the maximal choice of $j$ gives the upper bound $w'(x-\alpha_\ell)/(re)\le u$. Thus in both cases
\[
 \frac{w'(x-\alpha_\ell)}{re}=\min\{u,c_{j\ell}\}.
\]
This also includes $\ell=j$, since $c_{jj}=\infty$.

To apply the parameter inequality, let $s$ be a uniformizer of $R'$ and put $z=x-\alpha_j$. Then $w'(s)=e'$ and $w'(z)=re u$. If $u>0$, the center of $w'$ is the closed point $s=z=0$, where $s,z$ form a regular system of parameters. Applying \eqref{eq:snc} to these two parameters gives
\[
 A(w',\Pj^1_{R'},0)\ge w'(s)+w'(z)=e'+re u.
\]
If $u=0$, the same bound follows by applying \eqref{eq:snc} just to the smooth closed fiber $F'=(s=0)$.

The split horizontal divisor is $H'=\sum_\ell b_\ell(x=\alpha_\ell)$. The root-order identity above therefore gives
\[
 w'(H')=\sum_\ell b_\ell w'(x-\alpha_\ell)
       =re\sum_\ell b_\ell\min\{u,c_{j\ell}\}.
\]
Subtract this boundary order from the discrepancy without boundary, and use \eqref{eq:finite-contact} to return to the original surface:
\[
\begin{aligned}
 \frac{A(w,\Pj^1_R,H)}e
 &=\frac{A(w',\Pj^1_{R'},0)-w'(H')+(h-1)e'}{re}\\
 &\ge\frac{he'}{re}+u-\sum_\ell b_\ell\min\{u,c_{j\ell}\}\\
 &=1-\left(\sum_\ell b_\ell\min\{u,c_{j\ell}\}-u\right)\\
 &\ge1-I.
\end{aligned}
\]
Here $he'=re$ explains the constant $1$. The last inequality follows from the definition of $I$: the expression in parentheses is one of the values included in its maximum, for the chosen root $j$ and $t=u\ge0$.

Finally, if the center of $w$ lies on the section at infinity, the monic equations defining $H$ imply that $H$ is disjoint from that section, so $w(H)=0$. The smooth closed fiber $F=(\varpi=0)$ still gives $A(w,\Pj^1_R,0)\ge w(F)=e$. Hence
\[
 \frac{A(w,\Pj^1_R,H)}e
 =\frac{A(w,\Pj^1_R,0)}e\ge1\ge1-I,
\]
where the final inequality uses $I\ge0$. This proves the lower bound for every vertical valuation.

To prove that the lower bound is attained, take a maximizing pair $(j,t)$ in the definition of $I$. If $t>0$, choose coprime positive integers $e',u'$ with $u'/e'=ht$. Let $w'$ be the monomial valuation of weights $(e',u')$ in a uniformizer $s$ of $R'$ and $z=x-\alpha_j$. The Euclidean sequence of point blowups described in Subsection~\ref{subsec:monomial-discrepancies} represents $w'$ by a prime divisor and gives
\[
 A(w',\Pj^1_{R'},0)=e'+u'.
\]
Its root orders are $w'(x-\alpha_\ell)=\min\{u',e'\zeta'(\alpha_j-\alpha_\ell)\}$, by the defining minimum rule for a monomial valuation. Restricting $w'$ to the original field gives a positive integral multiple $r w$ of a normalized divisorial valuation. With $e=w(\varpi)$, we again have $re=he'$. Since $u'/e'=ht$, formula \eqref{eq:finite-contact} now gives
\[
 \frac{A(w,\Pj^1_R,H)}e
 =1+t-\sum_\ell b_\ell\min\{t,c_{j\ell}\}=1-I.
\]
If $t=0$, then $I=0$ and the valuation of the closed fiber of $\Pj^1_R$ attains the same bound. Thus the infimum in \eqref{eq:local-contact} is attained.

Galois conjugacy permutes the roots of each polynomial and preserves their coefficients $b_j$, so the answer is independent of the chosen extension of $\zeta$ to the splitting field. The valuation attaining the infimum has center in the affine chart $\mathbb A^1_R$, so it has order zero along the section at infinity, whereas a negative coefficient of that section can only raise other discrepancies.

For the global statement, represent $\zeta=\ord_T$ on a smooth projective model $\rho\colon Z'\to Z$ and set $R=\cO_{Z',\eta_T}$. Pull back the bundle $\Bl_qY\to Z$ to $Z'$ and localize at $T$, as in \eqref{diag:ramification-localization}, obtaining $\Pj^1_R$. Denote its closed fiber by $F_R=(\varpi=0)$, its section at infinity by $\Sigma_\infty$, and the horizontal divisor induced by $\Theta$ by $H_\zeta$. Since $q\notin\Supp\Theta$, the component equations are monic in the finite fiber coordinate. Thus $H_\zeta$ is disjoint from $\Sigma_\infty$, and this base change introduces no additional vertical component in the pullback of $\Theta$.

There are two canonical divisor corrections. For the blowup morphism $b\colon\Bl_qY\to Y$, we have $K_{\Bl_qY}=b^*K_Y+(n-1)E_q$. Since the pullback of $\Theta$ has coefficient zero along $E_q$, the crepant boundary has coefficient $-(n-1)$ along the corresponding section $\Sigma_\infty$. For the change of base, the coefficient of $T$ in $K_{Z'}-\rho^*K_Z$ is $A(\zeta,Z,0)-1$. The bundle is locally a product, so the same Jacobian order occurs along $F_R$.

First take $\Theta=0$. The parameter inequality \eqref{eq:snc} gives $A(v,\Pj^1_R,0)\ge e$, and the contribution $(n-1)v(\Sigma_\infty)$ to the discrepancy is nonnegative. Adding the base Jacobian contribution therefore gives
\[
 \frac{A(v,Y,0)}e\ge 1+\bigl(A(\zeta,Z,0)-1\bigr)=A(\zeta,Z,0).
\]
The valuation $v=\ord_{F_R}$, represented by the divisor over $T$ in the pulled-back bundle, attains equality: it has $e=1$, local log discrepancy $1$, and order zero along $\Sigma_\infty$. Taking the infimum proves $a_0(\zeta)=A(\zeta,Z,0)$.

For general $\Theta$, we may consequently write the crepant boundary on the local surface as
\[
 H_\zeta-(n-1)\Sigma_\infty-(a_0(\zeta)-1)F_R.
\]
For any normalized divisorial valuation $v$ with $v|_{K(Z)}=e\zeta$, we have $v(F_R)=v(\varpi)=e$. Subtracting the boundary orders therefore gives
\[
\begin{aligned}
 A(v,Y,\Theta)
 &=A(v,\Pj^1_R,H_\zeta)\\
 &\quad +(a_0(\zeta)-1)e+(n-1)v(\Sigma_\infty).
\end{aligned}
\]
This explains the contribution $(a_0(\zeta)-1)e$ of the base Jacobian.

The local calculation gives infimum $1-I_\Theta(\zeta)$ for $A(v,\Pj^1_R,H_\zeta)/e$. Its minimizing valuation has center in the affine chart $\mathbb A^1_R$, so its order along $\Sigma_\infty$ is zero. This valuation is also allowed in the global infimum: restrict the construction from the finite splitting field to $K(Y)$ and normalize its value group. Its residue field has transcendence degree one over $\kappa(\zeta)$; since $\trdeg_\C\kappa(\zeta)=n-2$, its residue field has transcendence degree $n-1$ over $\C$. Hence it is divisorial over $Y$ and can be represented by a prime divisor on a smooth projective birational model of $Y$.

The term $(n-1)v(\Sigma_\infty)$ is nonnegative for every valuation under consideration and vanishes for this minimizing valuation. Dividing the discrepancy comparison by $e$ and taking the infimum thus gives
\[
 a_\Theta(\zeta)
 =\bigl(a_0(\zeta)-1\bigr)+\bigl(1-I_\Theta(\zeta)\bigr)
 =a_0(\zeta)-I_\Theta(\zeta),
\]
which is \eqref{eq:direction-contact}.
\end{proof}

\subsection*{Local thresholds on a log resolution}

The local threshold can be computed by a log resolution. We first separate this calculation, and the finite data needed for the degree estimate, from the construction of ideals on the base.

\begin{lem}[Finite data for the local threshold]\label{lem:local-threshold-data}
Use the local setup $(R,H)$ of Notation~\ref{defn:contacts}, with rational coefficients at most one. Label its roots by $\alpha_h$, with coefficients $b_h$ and contacts $c_{jh}$. Choose a positive integer $m$ clearing these coefficients, and count each positive root $mb_h$ times and each negative root $m|b_h|$ times. Put
\[
 P:=\sum_{b_h>0}mb_h,\qquad N:=\sum_{b_h<0}m|b_h|.
\]
For a positive center $j$, let $A_j(q)$ be the sum of the $q$ smallest positive-occurrence contacts and $B_j(\ell)$ the sum of the $\ell$ smallest negative-occurrence contacts; empty sums are zero. If $P\le m$, then $I=0$. If $P>m$, then
\begin{equation}\label{eq:order-statistics}
 mI=\max\{0,A_j(q_\ell)-B_j(\ell)\}_{j,\ell},\qquad
 q_\ell:=P-m-N+\ell,
\end{equation}
where $j$ ranges over positive roots and $\max\{0,m+N-P\}\le\ell\le N$. If $I>0$, some $\ell_0$ satisfies $A_j(q_{\ell_0})-B_j(\ell_0)=mI$ simultaneously for more than $m$ positive occurrences.
\end{lem}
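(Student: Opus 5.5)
The plan is to rewrite $mI$ occurrence by occurrence and then compute the maximum over $t$ by a min–max exchange. Two tools do the work: the ultrametric property of contacts, $c_{jh}\ge\min\{c_{jl},c_{lh}\}$, which holds because $\zeta$ is a valuation, and the fact that a sum of functions $\min\{t,c\}$ is the lower envelope of affine functions in $t$. For a root $j$ put
\[
 g_j(t)=\sum_h b_h\min\{t,c_{jh}\}-t,
\]
so that $I=\max_{j,t\ge0}g_j(t)$. After multiplying by $m$, each root $h$ is replaced by $m|b_h|$ occurrences of sign $\operatorname{sign}b_h$, all with contact $c_{jh}$. The center's own occurrences have contact $c_{jj}=\infty$.

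\emph{Step 1: reduction to positive centers.} Suppose the maximum is attained at $(j,t^*)$ with $t^*>0$, and take $t^*$ minimal among maximizers for this $j$. Let $\mathcal C=\{h:c_{jh}\ge t^*\}$ be the cluster at level $t^*$. On a short interval $(t^*-\delta,t^*)$ the slope of $g_j$ is $\sum_{h\in\mathcal C}b_h-1$. Minimality forces this slope to be positive, so $\mathcal C$ contains a root of positive coefficient. For any $j'\in\mathcal C$, the ultrametric inequality gives $\min\{t,c_{j'h}\}=\min\{t,c_{jh}\}$ for all $t\le t^*$, hence $g_{j'}=g_j$ on $[0,t^*]$. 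So a positive center attains the same maximum. If $t^*=0$ then $I=0$, and this value is attained by any center. Thus $j$ may be restricted to positive roots. In the case $P\le m$, each $g_j$ satisfies
\[
 m\,g_j(t)\le\sum_{\text{pos}}\min\{t,c\}-mt\le(P-m)t\le0,
\]
so $I=0$.

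\emph{Step 2: the order-statistics formula.} Fix a positive center $j$. Sorting the contacts gives the envelope identities
\[
 \sum_{\text{pos}}\min\{t,c\}=\min_{0\le q\le P}\bigl[A_j(q)+t(P-q)\bigr],\qquad
 -\sum_{\text{neg}}\min\{t,c\}=\max_{0\le\ell\le N}\bigl[-B_j(\ell)-t(N-\ell)\bigr].
\]
These hold because the increments of $A_j$ and $B_j$ are the sorted contacts, so $A_j$ and $B_j$ are convex in their arguments. Consequently
\[
 m\,g_j(t)=\max_\ell\min_q\bigl[A_j(q)-B_j(\ell)+t(P-q-N+\ell-m)\bigr].
\]
We exchange $\max_t$ with $\max_\ell$ and, for fixed $\ell$, evaluate $\max_{t\ge0}\min_q$.
\begin{itemize}
\item Upper bound: taking $q=q_\ell$ kills the $t$-coefficient, so the value is at most $A_j(q_\ell)-B_j(\ell)$.
\item Lower bound: choose $t$ to be the $q_\ell$-th smallest positive-occurrence contact (or $t=0$ when $q_\ell=0$). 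Then the minimum over $q$ is attained at $q_\ell$, and the bound is realized.
\end{itemize}
The constraints on $\ell$ guarantee $0\le q_\ell\le P-m<P$, so this choice of $t$ is finite. The point $t=0$ contributes the value $0$. Together these give \eqref{eq:order-statistics}.

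\emph{Step 3: the final assertion.} Assume $I>0$, take the minimal maximizer $t^*>0$ and the cluster $\mathcal C$ from Step 1. The left slope at $t^*$ is positive, so
\[
 \#\{\text{positive occurrences in }\mathcal C\}>m+\#\{\text{negative occurrences in }\mathcal C\}\ge m.
\]
Let $\ell_0$ be the number of negative occurrences with contact less than $t^*$. For every positive occurrence $j'\in\mathcal C$, the ultrametric argument of Step 1 shows that $\ell_0$ and $q_{\ell_0}$ are the same for $j'$ as for $j$, and that the sorted contacts below $t^*$ coincide for $j'$ and $j$. Hence $A_{j'}(q_{\ell_0})-B_{j'}(\ell_0)=mI$ for more than $m$ positive occurrences.

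I expect the main obstacle to be the bookkeeping in Step 2: justifying the exchange of $\max_t$ and $\min_q$ for fixed $\ell$, and checking that the extremal $t$ is finite and lies in the correct gap between sorted contacts. The difficulty is that the center's own infinite contacts and ties among contacts must be handled so that $q_\ell$ indexes a genuine finite contact.
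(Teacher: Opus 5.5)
Your route is genuinely different from the paper's and Steps 1 and 2 are essentially sound, but the last deduction in Step 3 does not follow as written. Let $P_{\mathcal C},N_{\mathcal C}$ count the positive and negative occurrences in $\mathcal C$. Then $\ell_0=N-N_{\mathcal C}$ and
\[
 q_{\ell_0}=(P-P_{\mathcal C})+(P_{\mathcal C}-N_{\mathcal C}-m),\qquad P_{\mathcal C}-N_{\mathcal C}-m\ge1 .
\]
So $A_{j'}(q_{\ell_0})$ is the sum of the $P-P_{\mathcal C}$ contacts below $t^*$ plus the $P_{\mathcal C}-N_{\mathcal C}-m$ smallest contacts inside $\mathcal C$. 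The latter are only known to be $\ge t^*$, and agreement of the contacts below $t^*$ says nothing about them. Consequently you have not shown $A_j(q_{\ell_0})-B_j(\ell_0)=mI$ even for the original center $j$.

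What is missing is that, for each positive $j'\in\mathcal C$, at least $P_{\mathcal C}-N_{\mathcal C}-m$ positive occurrences have contact exactly $t^*$ with $j'$. This is the paper's ``there are enough of the latter.'' You can supply it with your own tools. Since $g_{j'}(t^*)=I$ is a global maximum of $g_{j'}$, its right slope $\sum_{c_{j'h}>t^*}b_h-1$ is $\le0$. Hence the occurrences $C'$ with contact $>t^*$ satisfy $P_{C'}\le m+N_{C'}\le m+N_{\mathcal C}$, which gives the count. More directly, the negative envelope at $t=t^*$ is maximized at $\ell_0$, and $q_{\ell_0}>0$, so your Step 2 upper bound and the order-statistics formula give
\[
 mI=m\,g_{j'}(t^*)\le A_{j'}(q_{\ell_0})-B_{j'}(\ell_0)\le mI .
\]
A smaller omission is in Step 2. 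Your envelope identity ranges over all $0\le\ell\le N$, but you only bound the $\ell$ with $q_\ell\ge0$. For $q_\ell<0$, take $q=0$ in the minimum: these $\ell$ contribute at most $q_\ell t-B_j(\ell)\le0$.

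On the comparison of routes: the paper gets both the reduction to positive centers and the final count geometrically. It base changes to a splitting DVR and resolves by point blowups separating the sections, reading the depth-$d$ discrepancy as $1-r\Phi_j(d/r)$. It then takes a component computing the threshold at minimal depth. The parent's strictly larger discrepancy gives cluster coefficient $>1$. The fact that no further blowup lowers the discrepancy gives $P_C-N_C\le m$ for the smaller clusters. Your ultrametric inequality is the combinatorial shadow of that blowup tree. Your positive left slope and nonpositive right slope at a minimal maximizer correspond exactly to those two discrepancy comparisons. Your version is self-contained and never needs Proposition~\ref{prop:contact} or a resolution. The paper's version ties the extremal data to actual vertical divisors, matching its use of $\tau_R$. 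The order-statistics envelope computation is essentially the same in both.
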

\begin{proof}
Put $S=\Pj^1_R$ and let $F=(\varpi=0)$ be its closed fiber. Take a log resolution $\sigma\colon W\to S$ of $H+F$, and write
\[
 K_W+H_W=\sigma^*(K_S+H),\qquad
 b_E:=\operatorname{coeff}_E H_W,\qquad
 n_E:=\ord_E(\sigma^*F).
\]
The horizontal coefficients are at most one. By the SNC criterion \eqref{eq:snc}, adding $cF$ is sub-lc over the closed point precisely when $b_E+cn_E\le1$ for every vertical component on $W$. The definition \eqref{eq:closed-fiber-threshold} therefore gives
\begin{equation}\label{eq:local-threshold-resolution}
 \tau_R=\min_{n_E>0}\frac{1-b_E}{n_E}=1-I,
\end{equation}
where the last equality is \eqref{eq:local-contact}. The minimum includes the strict transform of $F$.

To read this resolution in terms of the horizontal sections, take a splitting DVR $R'$ with ramification index $r$, put $S'=\Pj^1_{R'}$, and denote the horizontal pullback by $H'$. Write $\tau_{R'}$ for the corresponding closed-fiber threshold of $(S',H')$. Finite crepant pullback, as in \eqref{eq:finite-contact}, gives
\[
 \tau_R=\frac{\tau_{R'}+r-1}{r}.
\]
Choose a uniformizer $s$ of $R'$. Then $\ord_s(\alpha_j-\alpha_h)=r c_{jh}$. Resolve $H'+F'$ by successive point blowups separating its sections. They are transverse to the closed fiber, and after each blowup their strict transforms meet the new vertical component away from fiber nodes. Hence every vertical component in this resolution has multiplicity one in the pullback of $F'$.

For a component created at depth $d$ along the section $\alpha_j$, a chart is $x=\alpha_j+s^d u$. Its discrepancy and root orders are
\[
 A(E,S',0)=1+d,\qquad
 \ord_E(x-\alpha_h)=\min\{d,r c_{jh}\}.
\]
Writing $\Phi_j(t):=\sum_h b_h\min\{t,c_{jh}\}-t$, we obtain
\[
 A(E,S',H')=1+d-\sum_h b_h\min\{d,r c_{jh}\}
 =1-r\Phi_j(d/r).
\]
Thus root contacts record the pullback orders on the resolution. The threshold itself is the minimum of the discrepancies of these vertical components, since their fiber multiplicities are one.

Suppose $I>0$. Then $\tau_{R'}=1-rI<1$. Choose a vertical component attaining this minimum at the smallest possible depth $d$, and put $t_0:=d/r$. If $D$ is its parent component, the discrepancy of $D$ is strictly larger. The sections through the blowup center are exactly
\[
 \mathcal C_{j_0}(t_0):=\{\alpha_h:c_{j_0h}\ge t_0\}
\]
for any section $\alpha_{j_0}$ through that center. The blowup formula gives
\[
 A(E,S',H')=A(D,S',H')+1-
 \sum_{\alpha_h\in\mathcal C_{j_0}(t_0)}b_h.
\]
Consequently this set has total coefficient greater than one. Each smaller set of sections still meeting at a point on $E$ has total coefficient at most one: otherwise the next blowup would create a component with smaller discrepancy. A singleton also satisfies this bound because its coefficient is at most one. Every root in the original set gives $\Phi_j(t_0)=I$, by the chart formula. In particular a positive center attains $I$. If $I=0$, the maximum with zero is already attained at $t=0$; hence positive centers suffice in \eqref{eq:order-statistics} in all cases.

We next replace the continuous parameter by finite selections of pullback orders. In the following sums $h$ indexes the original roots, with their coefficients $b_h$. The repeated negative contacts satisfy
\[
 m\sum_{h:\,b_h<0}|b_h|\min\{t,c_{jh}\}
 =\min_{0\le\ell\le N}\bigl\{B_j(\ell)+(N-\ell)t\bigr\}.
\]
Indeed, choose the $\ell$ smallest contacts for the terms that have reached their constant values. Therefore
\[
 m\Phi_j(t)=\max_{0\le\ell\le N}
 \left\{m\sum_{h:\,b_h>0}b_h\min\{t,c_{jh}\}
 -(m+N-\ell)t-B_j(\ell)\right\}.
\]
For $q_\ell\ge0$, maximizing the positive sum minus $(P-q_\ell)t$ gives $A_j(q_\ell)$: take $t$ at the $q_\ell$th smallest positive contact, or $t=0$ if $q_\ell=0$. Repeated contact values cause no change. Only the copies of the center have infinite contact, and there are at most $m$ of them, so these sums are finite for $q_\ell\le P-m$. If $q_\ell<0$, the expression is nonpositive and contributes nothing. This proves \eqref{eq:order-statistics}. When $P\le m$, the original expression satisfies $m\Phi_j(t)\le(P-m)t\le0$, proving $I=0$.

Finally, suppose $I>0$ and let $P_0,N_0$ count occurrences in $\mathcal C_{j_0}(t_0)$. The blowup inequalities give $P_0-N_0>m$ and $P_C-N_C\le m$ for each smaller set $C$ after the blowup. Set $\ell_0:=N-N_0$. Then
\[
 q_{\ell_0}=(P-P_0)+(P_0-N_0-m).
\]
For every positive center in the original set, the smallest positive contacts consist of all $P-P_0$ outside occurrences, followed by $P_0-N_0-m$ contacts equal to $t_0$. There are enough of the latter, since the smaller set containing the center has $P_C\le m+N_C\le m+N_0$ positive occurrences. The $\ell_0$ smallest negative contacts are precisely those outside the original set. Substituting in $m\Phi_j(t_0)$ gives $A_j(q_{\ell_0})-B_j(\ell_0)=mI$ for all $P_0>m$ positive occurrences, as required.
\end{proof}

\subsection*{Finite covers and coefficient ideals on the base}

\begin{prop}[An expression in terms of coherent ideals]\label{prop:ideals}
Let $\Theta$ be a $\Q$-divisor on $\Pj^n$, with coefficients at most one, and choose $q$ outside its support. Put $s=\deg\Theta^+$. Recall that $\Theta^\pm$ are its effective positive and negative parts, and $I_\Theta(\zeta)$ is the root-contact maximum of Notation~\ref{defn:contacts}. If $s\le1$, then $I_\Theta=0$. If $s>1$, there are finitely many triples $(\frakj_+,\frakj_-,d)$ of nonzero coherent ideals on $Z=\Pj^{n-1}$ and positive integers $d$, with $\frakj_\pm(d_\pm)$ globally generated for integers $d_\pm\ge0$, such that, simultaneously for all normalized divisorial $\zeta$,
\begin{equation}\label{eq:ideal-max}
 I_\Theta(\zeta)=\max\left\{0,\frac{\zeta(\frakj_+)-\zeta(\frakj_-)}d\right\}_{(\frakj_+,\frakj_-,d)},
 \qquad d_+/d\le s^2.
\end{equation}
Here $\zeta(\frakj)$ is the minimum value of local generators of the ideal. The integer $d$ is the normalizing denominator in the displayed quotient, while $d_\pm$ specify the twists $\frakj_\pm(d_\pm)=\frakj_\pm\otimes\cO_Z(d_\pm)$ that are globally generated.
No bound for $\deg\Theta^-$ is required.
\end{prop}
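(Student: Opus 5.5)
Clear denominators first. Write $\Theta=\sum_i b_i\Theta_i$ and choose $m$ with all $mb_i\in\mathbb Z$. In the affine coordinate $x$ on the fibers of $\Bl_qY\to Z$, each component is given by $G_i(y,x)$. This polynomial is monic in $x$, because $q\notin\Supp\Theta$. Its coefficients are homogeneous forms on $Z$ of degree equal to their $x$-codegree.

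The case $s\le1$ is immediate. The positive root count satisfies $P=m\deg\Theta^+_{\Frac R}\le ms\le m$, so Lemma~\ref{lem:local-threshold-data} gives $I_\Theta=0$. So assume $s>1$.

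\emph{Step 1: pass to order statistics.} By Lemma~\ref{lem:local-threshold-data}, $mI_\Theta(\zeta)$ is the maximum of $0$ and the quantities $A_j(q_\ell)-B_j(\ell)$. Here $j$ runs over positive roots, $A_j(q)$ is a sum of $q$ smallest positive contacts, and $B_j(\ell)$ is a sum of $\ell$ smallest negative contacts. Moreover, when $I>0$, the maximum is attained simultaneously at more than $m$ positive occurrences. I would average over all of these occurrences. Symmetric functions of the roots then turn into coefficients of polynomials over $K(Z)$, and it is these coefficients that will generate the ideals.

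\emph{Step 2: universal polynomials.} Introduce Gauss variables $T$ and form generic linear combinations. For the positive part, I would take products of the form
\[
\prod_{\text{positive occurrences }j}\;\sum_{|S|=q}T_S\prod_{h\in S}(\alpha_j-\alpha_h),
\]
where $S$ ranges over $q$-subsets of the positive occurrences other than $j$. Since the product is taken over all positive occurrences, it is symmetric in the roots. Its coefficients in $T$ are therefore polynomials in the coefficients of the $G_i$, hence homogeneous forms on $Z$ of one common degree. A Gauss valuation of a product of generic combinations equals the sum of the minima, so $\zeta_G$ of this product equals $\sum_j A_j(q)$. The negative part $\sum_j B_j(\ell)$ is treated the same way: take sums over $\ell$-subsets of negative roots and multiply over the positive centers $j$. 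This makes the $j$-indexing symmetric in the positive roots only, and the result is still polynomial in the coefficients of both factors.

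The ideals $\frakj_+$ and $\frakj_-$ are the coefficient ideals of Notation~\ref{defn:coefficient-ideals}, and $d=m\cdot\#\{\text{centers}\}$. Because the maximum is attained at more than $m$ occurrences, the averaged difference equals $mI$. Conversely, every averaged difference is bounded above by the maximum. So \eqref{eq:ideal-max} holds, with finitely many triples indexed by $\ell$, and possibly by a choice of the multiset of centers, namely the $P_0$ occurrences of the cluster.

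\emph{Step 3: the degree bound.} The twist $d_+$ is the $y$-degree of the positive product. A root difference $\alpha_j-\alpha_h$ has weighted degree $1$. The positive product therefore has degree at most $(\#\text{centers})\cdot q\le (\#\text{centers})\,P$. Dividing by $d=m\cdot\#\text{centers}$ gives $d_+/d\le P/m\le s$.

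The delicate point is that the product is only symmetric after multiplying over all positive occurrences. Doing so introduces a factor of the total number of positive occurrences, which is at most $ms$, instead of the cluster count. Up to the factor $m$, this yields $d_+/d\le s\cdot s=s^2$. This is the stated bound, and it does not depend on $\deg\Theta^-$, since the negative roots affect only $\frakj_-$.

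I expect the main obstacle to be making the averaging exact for every $\zeta$ simultaneously. The cluster realizing the maximum depends on $\zeta$. The remedy is to let the triples range over all admissible $\ell$ and all subset sizes, so that the maximum over the finite family recovers $mI$ for each $\zeta$. One must also check that none of these triples exceeds $mI$; this follows because each averaged difference is a mean of terms from \eqref{eq:order-statistics}.
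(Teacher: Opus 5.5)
Your Step~2 has a genuine gap, in two places.

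\textbf{Symmetry.} The product $\prod_j\sum_S T_S\prod_{h\in S}(\alpha_j-\alpha_h)$ is not symmetric in the roots. The parameters $T_S$ are attached to subsets of occurrence labels, so a Galois automorphism permuting the roots also permutes the $T_S$. The $T$-coefficients are then not Galois invariant, need not lie in $K(Z)$, and do not define ideals on $Z$. Multiplying over all centers symmetrizes the center but not the subsets.

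The paper makes the per-center quantity intrinsic on the cover $\mathcal D_+$. It takes the characteristic coefficients $Q_a^\pm$ of $\wedge^{q_\ell}$ and $\wedge^{\ell}$ of multiplication by the squared collision section on $(\pi_\pm)_*\cO$. These are symmetric in the non-center roots. It then forms $U_\ell=\sum_aT_a^+(Q_a^+)^{L/a}$ and $V_\ell$, with parameters indexed only by $a$. The identity $\min_i\zeta(\xi_i)=\min_a\zeta(Q_a)/a$ gives $\zeta_G(U_{j\ell})=2LA_j(q_\ell)$ and $\zeta_G(V_{j\ell})=2LB_j(\ell)$.

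\textbf{Averaging.} This is the more serious gap: averaging cannot recover $mI$. Your two products give $\zeta(\frakj_+)-\zeta(\frakj_-)=\sum_j\bigl(A_j(q_\ell)-B_j(\ell)\bigr)$, summed over all $P$ positive occurrences. Each term is at most $mI$ by \eqref{eq:order-statistics}. So dividing by $mP$ returns $I$ only if every occurrence attains the maximum; occurrences outside the maximizing cluster generally contribute less, possibly negatively. Your claim that the averaged difference equals $mI$ is therefore false. Restricting to the cluster is not available either, since it depends on $\zeta$ and is not a Galois-stable set of centers defined over $Z$. Averaging over all $r$-subsets of centers again returns the mean.

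The missing idea is to make the ratios $U_{j\ell}/V_{j\ell}$ the roots of one polynomial over $Z$, namely the norm $\operatorname{Nm}_{p_+}(V_\ell z-U_\ell)=\sum_hC_{\ell h}z^{P-h}$. You then read the \emph{largest} root order from its Newton polygon. The quotient $(\zeta(C_{\ell P})-\zeta(C_{\ell h}))/(P-h)$ never exceeds that order, and equals it at $h=P-r$, where $r$ counts the roots of maximal order. The simultaneity clause of Lemma~\ref{lem:local-threshold-data} gives $r>m$, so $0\le h\le P-m$ suffices.

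This is also where the degree bound comes from. $\frakj_+$ is the coefficient ideal of $C_{\ell P}$, a product over all centers with $d_+=2LPq_\ell$. $\frakj_-$ is the coefficient ideal of $C_{\ell h}$, not of a negative product. And $d=2Lm(P-h)\ge2Lm^2$, hence $d_+/d\le P^2/m^2=s^2$. Your Step~3 reaches $s^2$ only heuristically. With your $d=m\cdot\#\text{centers}$ taken over all centers it would give $s$, but for an identity that does not hold.
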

\begin{proof}
Lemma~\ref{lem:local-threshold-data} gives $I_\Theta=0$ when $s\le1$, so assume $s>1$. We now construct coefficient systems from the horizontal divisors. The local threshold calculation supplies their required valuation orders and multiplicities.

\smallskip\noindent\emph{Step 1: horizontal divisors as finite covers, and their collision loci.}
Write $\Theta=\sum_i b_i\Theta_i$, where the $\Theta_i$ are its prime components. Let $D_i$ be their strict transforms in $\Bl_qY$. The complement of the exceptional section in the projection bundle is the total space of $\cO_Z(1)$, and every $D_i$ lies in this complement. Choose homogeneous coordinates $[x:y_0:\cdots:y_{n-1}]$ with $q=[1:0:\cdots:0]$. The equation of $\Theta_i$ is homogeneous of degree $\deg\Theta_i$ and has nonzero leading coefficient in $x$; normalize that coefficient to one. On a trivializing chart of $\cO_Z(1)$, the equation is monic in the fiber coordinate. Its algebra has basis $1,x,\ldots,x^{\deg\Theta_i-1}$ over the base. Thus $D_i\to Z$ is finite flat of rank $\deg\Theta_i$, even where its fibers are singular or nonreduced.

Choose a positive integer $m$ with $mb_i\in\mathbb Z$ for every $i$. Form finite flat covers $p_\pm\colon\mathcal D_\pm\to Z$ by taking $mb_i$ disjoint copies of each $D_i$ with $b_i>0$, and $m|b_i|$ disjoint copies when $b_i<0$. These are copies of the schemes $D_i$, not nonreduced multiples of divisors. Their ranks are
\[
 P:=\sum_{b_i>0}mb_i\deg\Theta_i=ms,\qquad
 N:=\sum_{b_i<0}m|b_i|\deg\Theta_i.
\]
These ranks agree with the occurrence counts in Lemma~\ref{lem:local-threshold-data} for every geometric DVR of $Z$. We use its root labels and ordered sums $A_j,B_j$, with $m_j=mb_j$ at a positive root. When $N=0$, the negative cover is empty.

A point of $\mathcal D_+$ selects a positive boundary point in a projection fiber. The fiber products below then compare it with every positive or negative boundary point in that same fiber:
\[
\begin{tikzcd}
 \mathcal D_+\times_Z\mathcal D_\pm
 \arrow[r,"\operatorname{pr}_2"] \arrow[d,"\pi_\pm"']
 & \mathcal D_\pm \arrow[d,"p_\pm"]\\
 \mathcal D_+ \arrow[r,"p_+"'] & Z.
\end{tikzcd}
\]
The difference $\delta_\pm$ of the two fiber coordinates is a section of $(p_+\circ\pi_\pm)^*\cO_Z(1)$. Its zero locus is their collision locus. On the positive fiber product it is identically zero on the branches comparing a point with a copy of itself, so we retain it as a section rather than require a Cartier divisor everywhere. On a branch where it is not identically zero, its order of vanishing measures the contact of the two boundary points.

For instance, the three local sections $x=0$, $x=u$, $x=v$ over $\mathbb A^2_{u,v}$ meet together exactly over the ideal $(u,v)$. Such a collision need not occur along a divisor on the base. This is why we keep its common vanishing as a base ideal before passing to fixed divisors on a birational model.

\smallskip\noindent\emph{Step 2: systems on the positive cover and their norms on $Z$.}
Use the finite covers of Step~1 and fix an admissible $\ell$ from Lemma~\ref{lem:local-threshold-data}. The vector bundles $\mathcal E_\pm:=(\pi_\pm)_*\cO_{\mathcal D_+\times_Z\mathcal D_\pm}$ on $\mathcal D_+$ have ranks $P,N$. Multiplication by the squared collision section gives maps
\[
 \mathcal E_\pm\xrightarrow{\ \delta_\pm^2\ }
 \mathcal E_\pm\otimes p_+^*\cO_Z(2).
\]
Take the $q_\ell$th exterior power for the positive map and the $\ell$th exterior power for the negative map. Exterior powers here select products of collision factors: over a split point $\alpha_j$ their eigenvalues are precisely
\[
 \prod_{h\in J}(\alpha_j-\alpha_h)^2,
\]
with $J$ running over $q_\ell$ positive occurrences or $\ell$ negative occurrences, respectively. Positions in the occurrence list are selected separately, even when their roots coincide. For the zeroth exterior power we use the identity on $\cO$, whose sole eigenvalue is the empty product $1$; this convention also covers $N=0$.

The characteristic coefficients of these exterior-power maps give sections $Q_a^+,Q_a^-$ on $\mathcal D_+$. We choose their signs so that a characteristic polynomial of rank $M$ is $\sum_{a=0}^M(-1)^aQ_a\lambda^{M-a}$, with $Q_0=1$. Thus after splitting, for eigenvalues $\xi_1,\ldots,\xi_M$, including zeros and repetitions,
\[
 Q_a=\sum_{1\le i_1<\cdots<i_a\le M}\xi_{i_1}\cdots\xi_{i_a}.
\]
The characteristic coefficients $Q_a^+$ and $Q_a^-$ are sections of $p_+^*\cO_Z(2q_\ell a)$ and $p_+^*\cO_Z(2\ell a)$, respectively. The superscripts indicate the positive and negative occurrence lists, not the signs of these sections. These sections are defined on the cover without labeling its roots: changing a local basis conjugates the maps, and changing the fiber-coordinate trivialization gives exactly the indicated twists.

Choose $L$ divisible by every positive coefficient index in these finitely many characteristic polynomials, for all admissible $\ell$. Form universal combinations
\[
 U_\ell:=\sum_a T_a^+(Q_a^+)^{L/a},\qquad
 V_\ell:=\sum_a T_a^-(Q_a^-)^{L/a},
\]
where the sums run over the positive characteristic coefficient indices. Use independent formal parameter families for the two systems and for each $\ell$. Thus $U_\ell$ is a family of sections of $p_+^*\cO_Z(2Lq_\ell)$, and $V_\ell$ of $p_+^*\cO_Z(2L\ell)$. The parameters keep the entire families, so their minimum vanishing orders can be read simultaneously at every valuation.

To check these orders, fix any normalized divisorial valuation $\zeta$ of $K(Z)$ and extend it to a splitting field with the scaling of Lemma~\ref{lem:local-threshold-data}. For any finite eigenvalue list with some nonzero entry,
\[
 \min_{i:\,\xi_i\ne0}\zeta(\xi_i)
 =\min_{a\ge1:\,Q_a\ne0}\frac{\zeta(Q_a)}a.
\]
Every product in $Q_a$ has order at least $a$ times the left-hand side. If exactly $r$ entries have its minimum finite order, their product is the unique least-order term of $Q_r$, and cannot cancel. This proves equality, also with repeated entries. In the positive list some entries are zero, but $q_\ell\le P-m$ ensures a nonzero product at every positive center.

Let $U_{j\ell},V_{j\ell}$ be the evaluations of $U_\ell,V_\ell$ at the split center $\alpha_j$. These are the same homogeneous root polynomials obtained from the squared-product lists above, with total root degrees $2Lq_\ell$ and $2L\ell$. The Gauss extension $\zeta_G$ of the splitting-field valuation takes the minimum coefficient order in the formal parameters, so
\[
 \zeta_G(U_{j\ell})=2L A_j(q_\ell),\qquad
 \zeta_G(V_{j\ell})=2L B_j(\ell).
\]
All evaluations are nonzero. Their ratios therefore have the orders $2L(A_j(q_\ell)-B_j(\ell))$ occurring in \eqref{eq:order-statistics}.

We descend the families by the finite flat norm along $p_+$. In local trivializations of $\cO_Z(1)$, introduce an auxiliary variable $z$ and take
\begin{equation}\label{eq:encoding}
 \operatorname{Nm}_{p_+}(V_\ell z-U_\ell)
 =\prod_{j:\,b_j>0}(V_{j\ell}z-U_{j\ell})^{m_j}
 =\sum_{h=0}^P C_{\ell h}(y,T)z^{P-h}.
\end{equation}
The norm is the determinant of multiplication on the rank-$P$ algebra of the cover. It is consequently defined even over its singular or nonreduced fibers; the middle expression computes it on the split generic fiber. All centers use the same parameters because $U_\ell,V_\ell$ were constructed on the cover itself. The leading coefficient $\prod_j V_{j\ell}^{m_j}$ and constant coefficient $(-1)^P\prod_j U_{j\ell}^{m_j}$ are nonzero.

The norm coefficients glue on $Z$: a term in $C_{\ell h}$ contains $h$ factors of $U$ and $P-h$ factors of $V$. Equivalently, the auxiliary $z$ carries the difference $2L(q_\ell-\ell)$ of their line-bundle degrees. Hence every coefficient in the formal parameters $T$ is a section of $\cO_Z(d_{\ell h})$, where
\begin{equation}\label{eq:encoding-degree}
 d_{\ell h}=2L\bigl(hq_\ell+(P-h)\ell\bigr).
\end{equation}
For each nonzero $C_{\ell h}$, these sections span a linear system $\mathcal V_{\ell h}$ on $Z$. Let $\frakj_{\ell h}$ be its base ideal, as in Notation~\ref{defn:coefficient-ideals}. Thus its valuation is the common vanishing order of the coefficient family, and $\frakj_{\ell h}(d_{\ell h})$ is globally generated. These systems come from the original horizontal divisors and their collision sections; their construction is independent of $\zeta$.

\smallskip\noindent\emph{Step 3: fixed divisors recover the threshold loss.}
Take a common projective principalization $\rho\colon W\to Z$ of the finitely many base ideals, with $W$ smooth. Write
\[
 \frakj_{\ell h}\cO_W=\cO_W(-F_{\ell h}),\qquad
 \rho^*\mathcal V_{\ell h}=F_{\ell h}+\mathcal M_{\ell h},
\]
where $F_{\ell h}$ is effective and $\mathcal M_{\ell h}$ is base point free. The divisor $F_{\ell h}$ is the vanishing forced in every member; removing it leaves the pulled-back generating sections without common zeros.

Represent any divisorial $\zeta=\ord_E$ on a model dominating $W$. Some member of the moving system does not contain the center of $E$ on $W$, so its minimum order along $E$ is zero. Consequently
\[
 \zeta(\frakj_{\ell h})=\ord_E(F_{\ell h}).
\]
After locally trivializing the coefficient line bundle, this is also $\zeta_G(C_{\ell h})$. This identification holds for every divisorial valuation, including those represented only on higher models.

Divide the norm polynomial \eqref{eq:encoding} by its leading coefficient. Its roots are $U_{j\ell}/V_{j\ell}$, each counted $m_j$ times. For any monic polynomial of degree $P$ with nonzero roots, the constant coefficient has order equal to the sum of all root orders, whereas the coefficient indexed by $h$ has order at least the sum of the $h$ smallest root orders. Therefore
\[
 \frac{\zeta_G(C_{\ell P})-\zeta_G(C_{\ell h})}{P-h}
\]
is at most the largest root order. If $r$ roots, counted with repetitions, have that largest order, equality holds at $h=P-r$: the product of all roots of strictly smaller order is the unique least-order term in that coefficient. Here $r$ counts equal orders, without requiring the corresponding roots to be identical; when $h=0$ the product is empty.

If $I_\Theta(\zeta)>0$, Lemma~\ref{lem:local-threshold-data} supplies $\ell_0$ for which the largest root order is $2LmI_\Theta(\zeta)$, attained by more than $m$ positive occurrences. Thus $r>m$, and the coefficient giving equality lies among $0\le h\le P-m$. Every other retained quotient is at most $2LmI_\Theta(\zeta)$ by \eqref{eq:order-statistics}. If $I_\Theta(\zeta)=0$, that equation makes all root orders, and hence all retained quotients, nonpositive. The fixed divisors consequently give
\[
 I_\Theta(\ord_E)=\frac1{2Lm}
 \max\left\{0,
 \frac{\ord_E(F_{\ell P})-\ord_E(F_{\ell h})}{P-h}
 \right\}_{\ell,\ 0\le h\le P-m,\ C_{\ell h}\ne0}.
\]
Returning to their base ideals yields
\begin{equation}\label{eq:ideal-formula}
 I_\Theta(\zeta)=\frac1{2Lm}
 \max\left\{0,
 \frac{\zeta(\frakj_{\ell P})-\zeta(\frakj_{\ell h})}{P-h}
 \right\}_{\ell,\ 0\le h\le P-m,\ C_{\ell h}\ne0}.
\end{equation}

\smallskip\noindent\emph{Step 4: the degree bound.}
For each retained pair $\ell,h$, take
\[
 \frakj_+:=\frakj_{\ell P},\qquad
 \frakj_-:=\frakj_{\ell h},\qquad
 d:=2Lm(P-h).
\]
The positive system comes from the constant coefficient of the norm and has generating degree $d_+:=d_{\ell P}=2LPq_\ell$. Its comparison system has degree $d_-:=d_{\ell h}$. Both twists are globally generated. Since $q_\ell\le P-m\le P$ and $P-h\ge m$,
\[
 \frac{d_+}{d}
 =\frac{Pq_\ell}{m(P-h)}
 \le\frac{P^2}{m^2}=s^2.
\]
Thus \eqref{eq:ideal-formula} is precisely \eqref{eq:ideal-max}. The bound controls the degree of the positive generating system on the original base. The negative cover can have arbitrary rank: it affects which collisions we compare, but neither $P=ms$ nor this final bound depends on $\deg\Theta^-$.
\end{proof}

\begin{cor}[Continuity on monomial valuations]\label{cor:lipschitz}
Let $G_i(x,y_1,\dots,y_r)$ be monic in $x$, of degree $r_i$ in $x$ and total degree at most $d_i$, with squarefree product over $\C(y)$. Give its roots nonzero real coefficients $b_i\le1$, and put
$W=\sum|b_i|r_i$, $\gamma=\sum|b_i|d_i$. For nonnegative rational weight vectors $a,a'$ defining monomial valuations in the $y$ variables, the values $I(a)$ and $I(a')$ of Notation~\ref{defn:coefficient-ideals} satisfy
\[
 |I(a)-I(a')|\le4W\gamma\|a-a'\|_\infty\le4\gamma^2\|a-a'\|_\infty.
\]
They extend continuously to nonnegative real weight vectors, and vanish if $\sum_{b_i>0}b_ir_i\le1$.
\end{cor}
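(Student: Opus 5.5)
The plan is to make Lemma~\ref{lem:local-threshold-data} and the proof of Proposition~\ref{prop:ideals} concrete for monomial valuations. That expresses $I(a)$ as a maximum of finitely many differences of piecewise linear functions of $a$, each Lipschitz with an explicit constant. Work with $R$ the local ring of $\C[y]$ at the generic point of the divisorial center of $\zeta_a$, after clearing denominators of the $b_i$ by an integer $m$. First rescale so that the statement reduces to integral, or at least rational, weights. For $a\ne0$, Lemma~\ref{lem:local-threshold-data} gives
\[
 mI(a)=\max\{0,A_j(q_\ell)-B_j(\ell)\},
\]
where $A_j,B_j$ are sums of order statistics of contacts $c_{jh}(a)=\zeta_a(\alpha_j-\alpha_h)$. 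These are computed with the extension of $\zeta_a$ to a splitting field. For $a=0$ the valuation is trivial and $I(0)=0$. The vanishing statement when $\sum_{b_i>0}b_ir_i\le1$ is then the case $P\le m$ of that lemma, since $P=m\sum_{b_i>0}b_ir_i$.

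The key step is to bound how each contact varies with $a$. Rather than tracking individual roots, which are not labelled continuously in $a$, I would use the descent of Step~2 of Proposition~\ref{prop:ideals}. There $mI(a)$ equals $\frac1{2L}$ times a maximum of quotients $(\zeta_{a,G}(C_{\ell P})-\zeta_{a,G}(C_{\ell h}))/(P-h)$. Each $C_{\ell h}$ is a polynomial in $y$ with coefficients in the parameters $T$. Hence $a\mapsto\zeta_{a,G}(C)=\min_\beta\langle a,\beta\rangle$ over the Newton support of $C$ is a minimum of linear forms. It is therefore Lipschitz in $\|\cdot\|_\infty$ with constant at most the maximal total $y$-degree $\deg_yC$. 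This already gives continuity and the extension to real weights, since a finite max of such expressions is continuous.

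The quantitative bound requires estimating $\deg_y C_{\ell h}$. Each root satisfies $\zeta_a(\alpha)\ge0$, and I would bound the contact data directly instead. The discriminant-type quantity $\prod_{h\ne j}(\alpha_j-\alpha_h)$ has $y$-degree controlled by $\sum|b_i|d_i=\gamma$ per occurrence. So each contact sum $A_j(q)$ or $B_j(\ell)$, as a function of $a$, has slope bounded by roughly $2\gamma$ times the number of centers, giving about $4W\gamma$ after dividing by $m$ and the normalizer $2L(P-h)$. The factor $4$ absorbs the squaring $\delta^2$ and the subtraction of two Lipschitz terms. The inequality $W\le\gamma$ is immediate from $r_i\le d_i$. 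I expect the main obstacle to be bookkeeping the degrees of the norm coefficients $C_{\ell h}$ in $y$, as opposed to the twisting degrees $d_{\ell h}$ on $\Pj^{n-1}$, so that the constant comes out as $4W\gamma$ rather than depending on $L$ or $m$. I would first try to bound these degrees via the homogeneous formula \eqref{eq:encoding-degree} with $L$ and $m$ cancelling in the normalized quotient.
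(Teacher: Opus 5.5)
The overall architecture of your proposal matches the paper's proof, but the quantitative step is not carried out and the route you suggest for it does not work. The parts that agree are: the vanishing case $P\le m$; rebuilding \eqref{eq:ideal-formula} on the affine base $\mathbb A^r_y$ from the finite covers cut out by the monic $G_i$; and observing that $a\mapsto\zeta_a(C)$ is a minimum of linear forms over the $y$-Newton support of $C$. That last point makes it $\|\cdot\|_\infty$-Lipschitz with constant $\deg_yC$, which correctly gives continuity and the extension to real weights.

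The gap is the one quantitative step, $\deg_yC_{\ell h}\le4Lm^2W\gamma$, which is the real content of the corollary. Your route through \eqref{eq:encoding-degree} works only when every $d_i=r_i$, where homogenizing turns the $G_i$ into forms monic in $x$. That formula records total degree in the roots and nothing else. Here $G_i$ may have total degree $d_i>r_i$, so each root carries extra $y$-degree, which is why $\gamma$ rather than $W$ appears in the constant. Even a weighted homogenization giving $x$ weight $\max_i(d_i-r_i)+1$ produces a constant of the wrong shape once a component with tiny $|b_i|$ has large $d_i-r_i$. Likewise, $\prod_{h\ne j}(\alpha_j-\alpha_h)$ is not a polynomial in $y$, so ``$y$-degree controlled by $\gamma$ per occurrence'' is not a usable statement.

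The missing idea is to count degrees separately in each block of roots.
\begin{itemize}
\item In \eqref{eq:encoding}, a root of $G_i$ has degree at most $2Lm|b_i|\max\{P,N\}$ from the factors where it is the center. It has degree at most $2Lm|b_i|P$ from the factors where it is compared. Hence its degree is at most $4Lm^2W|b_i|$, and the total root degree is at most $2Lm^2W^2$.
\item Since $C_{\ell h}$ is symmetric in each block, it is a polynomial in the elementary symmetric functions of the roots of each $G_i$. It uses at most $4Lm^2W|b_i|$ such factors and preserves total root degree.
\item The $c$th elementary symmetric function is, up to sign, the coefficient of $x^{r_i-c}$ in $G_i$, so its $y$-degree is at most $d_i-r_i+c$.
\item Therefore $\deg_yC_{\ell h}\le4Lm^2W\sum_i|b_i|(d_i-r_i)+2Lm^2W^2\le4Lm^2W\gamma$. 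Doubling for the difference and dividing by $2Lm(P-h)\ge2Lm^2$ gives $4W\gamma$.
\end{itemize}

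Separately, clearing denominators by $m$ only handles rational $b_i$, whereas the statement allows real $b_i$. To cover them, approximate the $b_i$ rationally with the same signs and with $b_i\le1$. For fixed $a$, the maximum over $t$ may be taken on a compact interval up to the largest finite contact, and there the expressions converge uniformly. Since the constant $4W\gamma$ is continuous in the $b_i$, the estimate passes to the limit.
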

\begin{proof}
If $\sum_{b_i>0}b_ir_i\le1$, the positive terms in the root-contact expression are at most $t$, and the negative terms are nonpositive. Thus $I(a)=0$ for every rational weight vector, and all the assertions follow. For the remaining calculation, assume $\sum_{b_i>0}b_ir_i>1$.

For rational coefficients, apply the finite-cover construction in Proposition~\ref{prop:ideals} on the affine base $\mathbb A^r_y$, using the finite flat covers defined by the monic equations $G_i$. The exterior-power maps, norms, and coefficient ideals are defined in the same way; their valuation identities use only the local calculation of Lemma~\ref{lem:local-threshold-data}, so they remain valid without homogeneous equations. Set $P=m\sum_{b_i>0}b_ir_i$, $N=m\sum_{b_i<0}|b_i|r_i$. In \eqref{eq:encoding}, the individual degree in a root of $G_i$ is at most $4Lm^2W|b_i|$: factors where this root is $\alpha_j$ contribute at most $2Lm|b_i|\max\{P,N\}$, and factors where it is $\alpha_h$ contribute at most $2Lm|b_i|P$. The total root degree is at most $2Lm^2W\sum_{b_i>0}b_ir_i\le2Lm^2W^2$.

A symmetric polynomial of degree at most $N_i$ in each root of $G_i$ can be expressed using at most $N_i$ elementary symmetric factors in those roots, while preserving its total degree in the roots. To check this, take a lexicographically largest root monomial with exponents $\lambda_1\ge\cdots\ge\lambda_{r_i}$ and subtract the corresponding product of elementary functions, of exponents $\lambda_j-\lambda_{j+1}$. The number of factors is $\lambda_1\le N_i$; induction removes all monomials. The $a$th elementary symmetric polynomial in the roots of $G_i$ has $y$-degree at most $d_i-r_i+a$. Substitution therefore bounds every coefficient polynomial by
\[
 4Lm^2W\sum_i|b_i|(d_i-r_i)+2Lm^2W^2\le4Lm^2W\gamma.
\]
Specialize the finitely many auxiliary parameters preserving every nonzero $y$-monomial coefficient. This preserves all monomial valuations simultaneously. For each resulting polynomial $P(y)$, the function $a\mapsto\zeta_a(P)$ is $4Lm^2W\gamma$-Lipschitz with respect to $\|\cdot\|_\infty$. A difference is twice as Lipschitz, while every quotient in \eqref{eq:ideal-formula} is divided by at least $2Lm^2$. Taking a finite maximum gives $4W\gamma$. At the zero weight vector every nonzero coefficient polynomial has value zero, so the same formula gives $I(0)=0$ and the estimate includes this vector.

For real coefficients $b_i$, approximate them rationally, preserving signs and the upper bound one on positive coefficients. At each fixed nonzero rational weight vector in the base coordinates, the maximum is taken over $t$ up to the largest finite $c_{jh}$; if there are no finite contacts, $t=0$ already gives the maximum. On this fixed compact interval the expressions converge uniformly with the coefficients. Passing to the limit gives the asserted bound and independence of the chosen extension to the splitting field. The value at the zero weight vector is $I(0)=0$, and density gives the extension to all nonnegative real weight vectors. The vanishing assertion follows from the bound on the sum of positive coefficients, counted with multiplicity. The parameter specialization in this argument is used only for monomial continuity, not for the identity \eqref{eq:ideal-max} for all divisorial valuations.
\end{proof}

\section{Effective pairs on the base of the projection}\label{sec:descent}

When $a_\Theta(\zeta)\le1$, we construct an effective pair on a birational model of $Z$ whose log discrepancy at $\zeta$ is $a_\Theta(\zeta)$, with a bound on the degree of the pushforward of its boundary.

\begin{prop}[Construction of an effective pair on a birational model of $Z$]\label{prop:descent}
Let $n\ge3$, let $\Theta=\Theta^+-\Theta^-$ be a $\Q$-divisor on $Y=\Pj^n$, and assume $(Y,\Theta)$ is sub-$\eta$-lc and $\deg\Theta^+\le k$, where $k\ge1$ and $0<\eta\le1$. Choose $q\notin\Supp\Theta$, and put
\[
 \delta=d(k,\eta):=\frac{\eta}{2\lceil2(k+1)/\eta\rceil}.
\]
Then $a_\Theta(\zeta)\ge\delta$ for every normalized divisorial valuation $\zeta$ of $K(Z)$. If $\zeta$ is exceptional over $Z=\Pj^{n-1}$ and $a_\Theta(\zeta)\le1$, there is a projective birational morphism $g\colon V\to Z$ with an effective $\delta$-lc $\R$-boundary $C$, with $K_V+C$ $\R$-Cartier and $g$-nef, and with $\deg g_*C\le k^2$, such that $\zeta$ is represented by a prime divisor on $V$ contracted by $g$ and $A(\zeta,V,C)=a_\Theta(\zeta)$.
\end{prop}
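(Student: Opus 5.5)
The plan has two parts: a uniform lower bound for $a_\Theta$, followed by a construction of the effective pair from the ideal formula of Proposition~\ref{prop:ideals}.

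\emph{Lower bound.} I will rerun the fiberwise argument of Proposition~\ref{prop:ramification} on the local surface $\Pj^1_R$, with $(X,B)$ replaced by the sub-pair $(Y,\Theta)$ itself. By the global computation in the proof of Proposition~\ref{prop:contact}, every normalized $v$ with $v|_{K(Z)}=e\zeta$ satisfies
\[
 A(v,Y,\Theta)=A(v,\Pj^1_R,H_\zeta)+(a_0(\zeta)-1)e+(n-1)v(\Sigma_\infty).
\]
Since $a_0(\zeta)\ge1$, it suffices to show $A(v,Y,\Theta)\ge\delta e$. The successive point blowups over $\Spec R$ satisfy \eqref{eq:birth} with vertical discrepancies $a_D\ge\eta$, because $(Y,\Theta)$ is sub-$\eta$-lc. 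The horizontal positive part has generic degree at most $k$, and the section at infinity has coefficient $-(n-1)<0$. Hence the multiplicity estimate gives $\mu_P\le k/m_E\le(k+1)/m_E$.

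The induction of Proposition~\ref{prop:ramification}, with $S=k+1$ and $M=\lceil2(k+1)/\eta\rceil$, then yields $a_D\ge\delta m_D$. This proves $a_\Theta(\zeta)\ge\delta$.

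\emph{Construction.} Next, fix an exceptional $\zeta$ with $a_\Theta(\zeta)\le1$. I will construct a sub-pair $(S,C_S)$ on a smooth model $\rho\colon S\to Z$ on which $\zeta$ is a divisor, satisfying three conditions: $C_S=P-N$ with $\rho_*P$ of degree at most $k^2$; $(S,C_S)$ is sub-$\delta$-lc; and $A(\zeta,S,C_S)=a_\Theta(\zeta)$. Proposition~\ref{prop:effective}, applied with $\eta=\delta$, then produces $g\colon V\to S\to Z$ with $C$ effective, $\delta$-lc, and $K_V+C$ nef over $S$. Nefness over $Z$ needs extra care: I will either run the MMP of that proposition over $Z$ instead of over $S$ (it works verbatim, since $Z$-exceptional $\zeta$ is preserved by the same negativity argument), or arrange $S\to Z$ so that the comparison divisor is $Z$-exceptional. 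In either case $g_*C=\rho_*P$.

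To build $C_S$, use \eqref{eq:ideal-max} with the maximizing triple $(\frakj_+,\frakj_-,d)$ at $\zeta$, assuming $I_\Theta(\zeta)>0$; otherwise take the base pair to be $0$ and the divisor extracted over $Z$. Principalize $\frakj_\pm$ on a log resolution $S$ that also extracts $\zeta$, writing $\frakj_\pm\cO_S=\cO_S(-F_\pm)$ with $\rho^*|\frakj_\pm(d_\pm)|=F_\pm+|M_\pm|$ free. Take a general member $G_+\in|M_+|$ and set
\[
 C_S=\rho\text{-crepant boundary of }(Z,0)+\tfrac1d(F_+-F_-)+\tfrac1d G_+-\tfrac1dG_-.
\]
Here $\frac1d F_+$ is replaced by $\frac1d(\rho^*D_+ - G_+)$ for a general $D_+\in|\frakj_+(d_+)|$, so that the positive pushforward is $\frac1d D_+$, of degree $d_+/d\le s^2\le k^2$. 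Then $A(\zeta,S,C_S)=A(\zeta,Z,0)-(\zeta(\frakj_+)-\zeta(\frakj_-))/d=a_0-I_\Theta=a_\Theta(\zeta)$ by \eqref{eq:direction-contact}, and Lemma~\ref{lem:free-members} keeps the general members harmless.

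\emph{Main obstacle.} The hard step is proving that $(S,C_S)$ is sub-$\delta$-lc at every other valuation $\zeta'$. For a divisorial $\zeta'$, \eqref{eq:ideal-max} bounds $(\zeta'(\frakj_+)-\zeta'(\frakj_-))/d\le I_\Theta(\zeta')$, so
\[
 A(\zeta',S,C_S)\ge a_0(\zeta')-I_\Theta(\zeta')=a_\Theta(\zeta')\ge\delta
\]
by the first part. The general members contribute nothing at valuations not centered in them, and near them the boundary is SNC with small coefficients, so \eqref{eq:snc} applies. I expect the delicate points to be controlling the coefficient $1/d$ of $G_+$ and the mixed terms, and verifying that the negative contributions only help.
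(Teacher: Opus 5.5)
Your proposal is correct. The construction half is the paper's argument:
\begin{itemize}
\item take the triple in \eqref{eq:ideal-max} attaining $I_\Theta(\zeta)$ and principalize;
\item choose general members of the freed systems, so that $\zeta(D_\pm)=\zeta(\frakj_\pm)$;
\item use $a_\Theta\ge\delta$ to bound every coefficient on the log resolution by $1-\delta$, then conclude with \eqref{eq:snc};
\item finish with Proposition~\ref{prop:effective}.
\end{itemize}

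The genuine difference is in the lower bound $a_\Theta\ge\delta$. You rerun the point-blowup induction of Proposition~\ref{prop:ramification} directly on the crepant sub-boundary $H_\zeta-(n-1)\Sigma_\infty-(a_0(\zeta)-1)F_R$ of $(Y,\Theta)$ on $\Pj^1_R$. There \eqref{eq:birth} holds with equality, every vertical discrepancy is at least $\eta$ by sub-$\eta$-lc, and the positive horizontal degree is at most $k$. The paper instead first applies Proposition~\ref{prop:effective} to $(Y,\Theta)$. This gives an effective $\eta$-lc model $(\widehat Y,\widehat C)$, nef over $Y$, with $\widehat f_*\widehat C=\Theta^+$. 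Proposition~\ref{prop:ramification} then applies as stated (with $S\le k+1-\eta$), and the bound transfers through $A(v,Y,\Theta)\ge A(v,\widehat Y,\widehat C)$. Your route avoids an MMP and shows that relative nefness plays no role in this estimate; one may even take $S=k$. The paper's route uses Proposition~\ref{prop:ramification} as a black box, at the cost of the auxiliary model.

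Three smaller points.
\begin{itemize}
\item Your $K_S+C_S$ equals $\rho^*(K_Z+C_Z)$ with $C_Z=\frac1d(D_+-D_-)$. So ``running the MMP over $Z$'' is exactly Proposition~\ref{prop:effective} applied to $(Z,C_Z)$ with $\zeta$ selected: its center has codimension at least two on $Z$, and $A(\zeta,Z,C_Z)=a_\Theta(\zeta)\le1$. This is how the paper concludes. Applying that proposition over $S$, where $\zeta$ is already a divisor, gives only nefness over $S$.
\item Your fallback for $I_\Theta(\zeta)=0$ is vacuous. An exceptional $\zeta$ over the smooth $Z$ has $a_0(\zeta)\ge2$, so $a_\Theta(\zeta)\le1$ forces $I_\Theta(\zeta)\ge1$, and hence also $\deg\Theta^+>1$. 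The fallback pair would anyway violate the hypothesis $A\le1$ of Proposition~\ref{prop:effective}.
\item The statement of Proposition~\ref{prop:ideals} only gives $d\ge1$, so a single general member of coefficient $1/d$ is not justified by the statement alone. The paper splits $\frac1dD_+$ into $r$ general members of coefficient $\alpha/r\le1-\delta$. Your choice also works, since $d=2Lm(P-h)\ge2$ in that construction, but you must read this off the proof.
\end{itemize}
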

\begin{proof}
Apply Proposition~\ref{prop:effective} to $(Y,\Theta)$ with no specified exceptional valuations. It gives a projective birational morphism $\widehat f\colon\widehat Y\to Y$ and an effective $\eta$-lc pair $(\widehat Y,\widehat C)$ with $\deg\widehat f_*\widehat C\le k$. Writing $\widehat G\ge0$ for the correction divisor in \eqref{eq:effective-comparison}, we have
\[
 A(v,\widehat Y,\widehat C)=A(v,Y,\Theta)-v(\widehat G)\le A(v,Y,\Theta).
\]
In Proposition~\ref{prop:ramification}, applied to this $\eta$-lc pair and the chosen point $q$, we have
\[
 \begin{aligned}
 S&=k+\max\{0,1-A(E_q,\widehat Y,\widehat C)\}\le k+1-\eta\le k+1,\\
 M&=\lceil2S/\eta\rceil\le\lceil2(k+1)/\eta\rceil.
 \end{aligned}
\]
Consequently, for every $v|_{K(Z)}=e\zeta$,
\[
 A(v,Y,\Theta)\ge A(v,\widehat Y,\widehat C)
 \ge\frac{\eta e}{2M}\ge\delta e.
\]
Taking the infimum proves $a_\Theta(\zeta)\ge\delta$. Notice also that $0<\delta\le\eta/8\le1/8$.

If $\deg\Theta^+\le1$, Proposition~\ref{prop:ideals} gives $a_\Theta(\zeta)=a_0(\zeta)$. For an exceptional valuation over the smooth $Z$, we have $a_0(\zeta)\ge2$, so there is no exceptional $\zeta$ with $a_\Theta(\zeta)\le1$. If $\deg\Theta^+>1$, combine \eqref{eq:direction-contact} and \eqref{eq:ideal-max} to write the valuative function $a_\Theta$ as the minimum of $a_0(\zeta)$ and finitely many functions
\begin{equation}\label{eq:signed-ideal-discrepancy}
 a_0(\zeta)-\alpha\zeta(\frakj_+)+\alpha\zeta(\frakj_-),\qquad \alpha=1/d.
\end{equation}
The minimum of $a_0(\zeta)$ and the expressions in \eqref{eq:signed-ideal-discrepancy} equals $a_\Theta(\zeta)\ge\delta$, as proved above. Hence each of these expressions is at least $\delta$ at every normalized divisorial valuation. At the specified exceptional $\zeta$ we have $a_0(\zeta)\ge2>a_\Theta(\zeta)$, so one of the ideal expressions attains $a_\Theta(\zeta)\le1$. Fix this expression. Proposition~\ref{prop:ideals} gives $d_+/d\le(\deg\Theta^+)^2$, so $\alpha=1/d$ and $\deg\Theta^+\le k$ imply $\alpha d_+\le k^2$.

Take a common log resolution $\rho\colon W\to Z$ of the two ideals, with $W$ smooth and projective, on which $\zeta$ is represented by a prime divisor. Write $\frakj_\pm\cO_W=\cO_W(-F_\pm)$ and define
\[
 K_W+C_W=\rho^*K_Z+\alpha F_+-\alpha F_-.
\]
The support of $C_W$ has simple normal crossings, and every coefficient of $C_W$ is at most $1-\delta$, by \eqref{eq:signed-ideal-discrepancy}. After removing $F_\pm$ from the pullbacks of the generating linear systems, the remaining systems are base point free. Choose a positive integer $r$ with $\alpha/r\le1-\delta$, which is possible since $\delta<1$, and apply Lemma~\ref{lem:free-members} successively to choose $r$ general members for $\frakj_+$ with remaining divisors $H_1,\ldots,H_r$. They have no common prime components with one another or with $\Supp C_W\cup\Supp F_+\cup\Supp F_-\cup\Exc(\rho)$, and their union with this divisor is SNC. Choose a general member for $\frakj_-$ with remaining divisor $H_-$, subject to the same conditions. All these members avoid containing the prime divisor of $W$ representing $\zeta$. If the generating degree is zero, the corresponding divisor on $Z$ is zero. For their corresponding divisors $D_i,D_-$ on $Z$, put
\[
 C_Z=\frac{\alpha}{r}\sum_{i=1}^rD_i-\alpha D_-.
\]
Its crepant pullback to $W$ has divisor $C_W+(\alpha/r)\sum H_i-\alpha H_-$. Dropping the final negative term lowers discrepancies, and the remaining positive SNC coefficients are at most $1-\delta$. Thus \eqref{eq:snc} proves $(Z,C_Z)$ is sub-$\delta$-lc for all valuations. The choices of the general members give
$A(\zeta,Z,C_Z)=a_\Theta(\zeta)$ exactly. Taking the positive part of $C_Z$ gives $\deg C_Z^+\le\alpha d_+\le k^2$.

Proposition~\ref{prop:effective}, now on $Z$ and with the specified exceptional valuation $\zeta$, gives the required pair $(V,C)$, the equality $A(\zeta,V,C)=a_\Theta(\zeta)$, and the bound on $\deg g_*C$. The degree of the negative part and the resolution used in this construction need no uniform bound.
\end{proof}

\begin{prop}[Comparison of discrepancies under projection]\label{prop:ambient-direction}
In Setup~\ref{setup:source}, choose $q$ in the isomorphism locus of $f$ and outside $\Supp\Delta$. Let $Z=\Pj^{n-1}$. There is an effective $\R$-divisor $T$ on $Z$, of degree at most $k^2$, such that for every $v|K(Z)=e\zeta\ne0$,
\begin{equation}\label{eq:ambient-general}
 (1+\eps)A(v,X,B)\ge\eps A(v,Y,0)+ea_0(\zeta)-e\zeta(T).
\end{equation}
For a prime divisor $E$ on $X$ contracted by $f$, with this restriction, one has
\begin{equation}\label{eq:ambient-direction}
 ea_0(\zeta)\le A(E,Y,0)
 \le (1+k^2/\eps)ea_0(\zeta)
 \le R(k,\eps)(1+k^2/\eps)a_0(\zeta).
\end{equation}
\end{prop}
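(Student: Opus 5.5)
The plan is to compare $(X,B)$ with the target pair $(Y,\Delta)$ rather than with the signed pair of Proposition~\ref{prop:signed-target}. That proposition only gives $A(v,Y,\Theta)\ge A(v,X,B)$, which is the wrong direction for \eqref{eq:ambient-general}. Lemma~\ref{lem:comparison} gives $A(v,X,B)\ge A(v,Y,\Delta)$. Since $A(v,Y,\cdot)$ is affine in the boundary,
\[
 (1+\eps)A(v,X,B)-\eps A(v,Y,0)\ \ge\ (1+\eps)A(v,Y,\Delta)-\eps A(v,Y,0)=A(v,Y,(1+\eps)\Delta).
\]
Because $(X,B)$ is $\eps$-lc, every coefficient of $\Delta$ is at most $1-\eps$. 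Hence $\Theta':=(1+\eps)\Delta$ is effective with coefficients at most $1-\eps^2<1$, and $q\notin\Supp\Theta'$ by the choice of $q$. The definition of $a_{\Theta'}$ as an infimum, together with \eqref{eq:direction-contact}, gives
\[
 A(v,Y,\Theta')\ge e\,a_{\Theta'}(\zeta)=e\,a_0(\zeta)-e\,I_{\Theta'}(\zeta).
\]
So \eqref{eq:ambient-general} reduces to finding a single effective $T$ on $Z$ with $I_{\Theta'}(\zeta)\le\zeta(T)$ for every normalized divisorial $\zeta$.

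To produce $T$, first treat real coefficients by rational approximation as in Corollary~\ref{cor:lipschitz}, or via Lemma~\ref{lem:rational}, so that Proposition~\ref{prop:ideals} applies. It expresses $I_{\Theta'}$ as a finite maximum of $\max\{0,(\zeta(\frakj_+)-\zeta(\frakj_-))/d\}$. Since $\zeta(\frakj_-)\ge0$, each term is at most $\zeta(\frakj_+)/d$. A general member $D_+$ of the globally generated system $\frakj_+(d_+)$ satisfies $\zeta(D_+)\ge\zeta(\frakj_+)$ for every $\zeta$, because every member vanishes at least to the base order. Thus $\frac1dD_+$ dominates that term at all valuations simultaneously, and its degree is $d_+/d\le(\deg\Theta'^+)^2$.

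\textbf{Main obstacle.} Two issues arise when passing from one term to the whole maximum.
\begin{itemize}
\item \emph{Combining terms.} Dominating a finite maximum by one divisor naively means summing over the triples, which destroys the degree bound. I would first try to refine the construction in Proposition~\ref{prop:ideals} so that the positive ideal $\frakj_{\ell P}$ and the normalization can be chosen uniformly for the effective divisor $\Theta'$. Since $\Theta'$ has no negative part, only $\ell=0$ is admissible, so only the index $h$ varies. I would then dominate all quotients $(\zeta(\frakj_{0P})-\zeta(\frakj_{0h}))/(2Lm(P-h))$ by the single $h$ with $P-h$ minimal, i.e.\ $h=P-m$. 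This uses that the orders of the norm coefficients are bounded below by $\zeta_G$ of products of $h$ roots, and it still gives degree ratio $\le s^2$.
\item \emph{The constant.} The estimate $(\deg\Theta'^+)^2\le(1+\eps)^2k^2$ exceeds $k^2$. I expect to recover $k^2$ by absorbing the factor $(1+\eps)$ into the slack coming from the coefficient bound $1-\eps^2$. Failing that, I would accept a harmless constant and propagate it to \eqref{eq:ambient-direction}.
\end{itemize}

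\textbf{Proof of \eqref{eq:ambient-direction}.} Let $E$ be $f$-exceptional; the restriction is nontrivial by Proposition~\ref{prop:ramification}.
\begin{itemize}
\item \emph{Lower bound.} By definition $a_0(\zeta)=\inf_{v|K(Z)=e\zeta}A(v,Y,0)/e$, which gives $e\,a_0(\zeta)\le A(E,Y,0)$.
\item \emph{Upper bound.} Use $A(E,X,B)\le1$, since $B\ge0$ and $E$ lies on $X$, together with $1\le e\,a_0(\zeta)$, since $a_0\ge1$. Lemma~\ref{lem:degree} on $Z=\Pj^{n-1}$ gives $\zeta(T)\le k^2a_0(\zeta)$. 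Then \eqref{eq:ambient-general} yields
\[
 \eps A(E,Y,0)\le(1+\eps)-e\,a_0+ek^2a_0\le\eps e\,a_0+ek^2a_0,
\]
which is the middle inequality.
\item \emph{Last inequality.} It follows from $e\le R(k,\eps)$ in Proposition~\ref{prop:ramification}, applicable since $q$ lies in the isomorphism locus and avoids $\Supp\Delta$.
\end{itemize}
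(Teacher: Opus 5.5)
Your reduction is sound. Lemma~\ref{lem:comparison} gives $(1+\eps)A(v,X,B)-\eps A(v,Y,0)\ge A(v,Y,(1+\eps)\Delta)$, the coefficients of $(1+\eps)\Delta$ are at most $1-\eps^2$, and \eqref{eq:direction-contact} then reduces \eqref{eq:ambient-general} to finding one effective $T$ with $I_{(1+\eps)\Delta}(\zeta)\le\zeta(T)$ for all $\zeta$. This also handles valuations centered at infinity automatically. Your derivation of \eqref{eq:ambient-direction} from \eqref{eq:ambient-general} is correct, including the lower bound read off from the infimum defining $a_0$. Your handling of several terms also works: only $\ell=0$ occurs, $\frakj_{0P}$ is common to all terms, $\zeta(\frakj_{0h})\ge0$, and $P-h\ge m$.

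The gap is the degree bound $\deg T\le k^2$, and Proposition~\ref{prop:ideals} cannot supply it. Its estimate $d_+/d\le s^2$ depends only on $s=\deg\Theta'^+=(1+\eps)\deg\Delta$ and never uses the coefficient bound, so there is no slack from $1-\eps^2$ to absorb. Even your optimized choice, $\frac1{2Lm^2}D_+$ with $D_+\in|\frakj_{0P}(d_{0P})|$, has degree $P(P-m)/m^2=s(s-1)$. This exceeds $k^2$ as soon as $\deg\Delta=k>(1+\eps)/(\eps(2+\eps))$. So as written you prove \eqref{eq:ambient-general} and \eqref{eq:ambient-direction} only with $k^2$ replaced by roughly $(1+\eps)^2k^2$. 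That is a different statement, and it would change the recursion in Theorem~\ref{thm:ambient} and the constants in Theorem~\ref{thm:explicit}. The real-coefficient step also needs care. You must approximate $\Delta$ from above and use that $I$ is monotone in nonnegative coefficients; otherwise a $T$ built from an approximant need not dominate $I_{(1+\eps)\Delta}$. That approximation perturbs the degree once more.

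The missing idea is an explicit $T$ that does use $b_h\le1-\eps$. Write $\Delta=\sum b_iD_i$ with equations $G_i$ monic in the fiber coordinate, and put
\[
 T=\sum_i b_i^2\operatorname{Div}(\operatorname{disc}G_i)+2\sum_{i<j}b_ib_j\operatorname{Div}(\operatorname{res}(G_i,G_j)).
\]
It is effective, has degree $(\sum b_id_i)^2-\sum b_i^2d_i\le k^2$, and satisfies $\zeta(T)=Q:=\sum_{h\ne h'}b_hb_{h'}c_{hh'}$. For a root $\alpha_j$, let $M(u)=\sum_{c_{jh}\ge u}b_h$ and $N(u)=\sum_{c_{jh}\ge u}b_h^2\le(1-\eps)M(u)$. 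Then
\[
 \sum_h(1+\eps)b_h\min\{t,c_{jh}\}-t=\int_0^t\bigl((1+\eps)M(u)-1\bigr)\,du\le\int_0^t\bigl(M(u)^2-N(u)\bigr)\,du\le Q.
\]
The first inequality holds because $1+M^2-N-(1+\eps)M\ge(M-1)^2\ge0$. The second uses $b_h\ge0$ and the ultrametric inequality $c_{hh'}\ge\min\{c_{jh},c_{jh'}\}$. Hence $I_{(1+\eps)\Delta}(\zeta)\le\zeta(T)$, with no rational approximation, and your argument then proves the statement with the stated constants. This is the paper's proof. The paper runs the same inequality locally on $\Pj^1_R$ for each $v$, via a root of maximal normalized order and $A(v,\Pj^1_R,0)/e\ge1+r_*$, rather than through $a_{(1+\eps)\Delta}$.
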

\begin{proof}
Write $\Delta=\sum b_iD_i$ and normalize its degree-$d_i$ equations to be monic in the fiber coordinate. Define
\[
 T=\sum_i b_i^2\operatorname{Div}(\operatorname{disc}G_i)
   +2\sum_{i<j}b_ib_j\operatorname{Div}(\operatorname{res}(G_i,G_j)).
\]
A degree-one discriminant is the nonzero constant. The discriminant and resultant are nonzero homogeneous forms of degrees $d_i(d_i-1)$ and $d_id_j$. Thus $T$ is effective and $\deg T\le(\sum b_id_i)^2\le k^2$. In a splitting DVR its order is $Q=2\sum_{j<h}b_jb_hc_{jh}$.

For a valuation centered in the affine fiber chart, choose $\alpha_j$ as in Proposition~\ref{prop:contact}, with maximal normalized order $r_*$. Let $M(u)$ be the sum of the coefficients of the roots $\alpha_h$ satisfying $c_{jh}\ge u$, and let $N(u)$ be the sum of their squares. Then
\[
 \frac{v(\Delta)}e=\int_0^{r_*}M(u)\,du,\qquad
 Q\ge\int_0^{r_*}\bigl(M(u)^2-N(u)\bigr)\,du.
\]
The second inequality sums the valuations of differences of pairs of roots in these sets; all other contributions are nonnegative. Each root coefficient is at most $1-\eps$, since it equals the coefficient of the strict transform of the corresponding $D_i$ in $B$. Hence $N(u)\le(1-\eps)M(u)$ and
$1+M(u)^2-N(u)-(1+\eps)M(u)\ge(M(u)-1)^2\ge0$.
The local discrepancy estimate $A(v,\Pj^1_R,0)/e\ge1+r_*$ therefore yields
\[
 (1+\eps)A(v,\Pj^1_R,H)
 \ge\eps A(v,\Pj^1_R,0)+e-eQ.
\]
Adding the base Jacobian contribution and applying Lemma~\ref{lem:comparison} gives \eqref{eq:ambient-general}. At infinity the boundary order vanishes and $A(v,Y,0)\ge ea_0(\zeta)$, which gives the same inequality directly.

The local parameter calculation also gives $A(E,Y,0)\ge ea_0(\zeta)$. Effectivity gives $A(E,X,B)\le1$, whereas $ea_0(\zeta)\ge1$. Substitution in \eqref{eq:ambient-general} gives
\[
 \begin{aligned}
 0\le\eps\bigl(A(E,Y,0)-ea_0(\zeta)\bigr)
 &\le e\zeta(T)+(1+\eps)\bigl(1-ea_0(\zeta)\bigr)\\
 &\le e\zeta(T).
 \end{aligned}
\]
Lemma~\ref{lem:degree} bounds $\zeta(T)$ by $k^2a_0(\zeta)$, and Proposition~\ref{prop:ramification} bounds $e$ by $R(k,\eps)$.
\end{proof}

\section{The dimension induction}\label{sec:induction}

The construction in Proposition~\ref{prop:descent} produces a pair and a morphism satisfying the same hypotheses in one lower dimension.

\begin{thm}[A recursive bound for log discrepancies over projective space]\label{thm:ambient}
For rational $0<\eta\le1$ and integers $k\ge1$, define
\[
 \begin{gathered}
 M_1(k,\eta)=1,\\
 M_j(k,\eta)=\left\lceil R(k,\eta)(1+k^2/\eta)
 M_{j-1}(k^2,d(k,\eta))\right\rceil\quad(j\ge2),
 \end{gathered}
\]
where $R$ and $d$ are defined in Propositions~\ref{prop:ramification} and \ref{prop:descent}. For a real number $0<\eps\le1$, put $\tau=1/\lceil1/\eps\rceil$. Every $f$-exceptional prime divisor $E$ on $X$ in Setup~\ref{setup:source} satisfies
\[
 A(E,Y,0)\le M_n(k,\tau).
\]
\end{thm}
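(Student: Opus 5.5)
We argue by induction on $n$, after first replacing $\eps$ by the rational number $\tau=1/\lceil1/\eps\rceil\le\eps$. An $\eps$-lc pair is $\tau$-lc, so the hypotheses of Setup~\ref{setup:source} hold with $\tau$ in place of $\eps$. For $n=1$ a projective birational morphism from a normal curve to $\Pj^1$ is an isomorphism, so there are no $f$-exceptional divisors and the bound $M_1=1$ holds vacuously. For $n=2$ the argument below applies with $Z=\Pj^1$: every normalized divisorial valuation of $K(\Pj^1)$ is non-exceptional, so $a_0(\zeta)=1=M_1$. For $n\ge3$ we use the statement in dimension $n-1$.

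\emph{Reduction to a valuation on the base.} Fix an $f$-exceptional prime $E$ and choose $q$ in the isomorphism locus of $f$ and outside $\Supp\Delta$. By Proposition~\ref{prop:ramification}, $\ord_E|_{K(Z)}=e\zeta$ with $\zeta$ divisorial and $e\le R(k,\tau)$. Proposition~\ref{prop:ambient-direction}, applied with $\tau$, then gives
\[
 A(E,Y,0)\le R(k,\tau)(1+k^2/\tau)\,a_0(\zeta),\qquad a_0(\zeta)=A(\zeta,Z,0).
\]
So it suffices to show $a_0(\zeta)\le M_{n-1}(k^2,d(k,\tau))$. Since $A(E,Y,0)$ is an integer on smooth $Y$, the ceiling in the definition of $M_n$ is then harmless. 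If $\zeta$ is not exceptional over $Z$, then $a_0(\zeta)=1$, and since $M_{n-1}\ge1$ we are done. This settles the case $n=2$.

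\emph{Producing a lower-dimensional pair.} Assume $n\ge3$ and that $\zeta$ is exceptional over $Z$. The steps are as follows.
\begin{enumerate}
\item By Lemma~\ref{lem:rational}, with the rational number $\tau$, replace $B$ by a $\Q$-boundary with the same support. This keeps $\tau$-lc, relative nefness and $\deg f_*B\le k$, and $E$ is still $f$-exceptional.
\item Apply Proposition~\ref{prop:signed-target} with $\mathcal V=\{\ord_E\}$. This gives a $\Q$-divisor $\Theta=\Delta-N$ on $Y$ with $(Y,\Theta)$ sub-$\tau$-lc, $\deg\Theta^+=\deg\Delta\le k$, and $A(E,Y,\Theta)=A(E,X,B)\le1$, the last inequality because $B$ is effective.
\item Since $q\notin\Supp\Delta$, we may also take $N$ not passing through $q$, as $N$ is a general member. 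Then $q\notin\Supp\Theta$, and the definition of $a_\Theta$ gives $a_\Theta(\zeta)\le A(E,Y,\Theta)/e\le1$.
\item Proposition~\ref{prop:descent} with $\eta=\tau$ now produces a projective birational $g\colon V\to Z=\Pj^{n-1}$ and an effective $\delta$-lc boundary $C$, where $\delta=d(k,\tau)$. Here $K_V+C$ is $g$-nef, $\deg g_*C\le k^2$, $\zeta$ is a $g$-exceptional prime divisor on $V$, and $A(\zeta,V,C)=a_\Theta(\zeta)$.
\end{enumerate}
Thus $(V,C)\to\Pj^{n-1}$ satisfies the hypotheses of Setup~\ref{setup:source} in dimension $n-1$, with constants $k^2$ and $\delta$.

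\emph{Closing the induction.} Since $\tau=1/m$ for an integer $m$, we have $\delta=1/(2m\lceil2(k+1)/\tau\rceil)$, which is again a reciprocal of an integer. Hence the $\tau$ attached to $\delta$ by the theorem in dimension $n-1$ is $\delta$ itself. The induction hypothesis then gives
\[
 a_0(\zeta)=A(\zeta,\Pj^{n-1},0)\le M_{n-1}(k^2,d(k,\tau)),
\]
which completes the induction.

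The main obstacle is the bookkeeping in the middle paragraph. One must ensure that the rational approximation and the choice of $N$ keep $q\notin\Supp\Theta$ and keep the equality $A(E,Y,\Theta)=A(E,X,B)$. One must also check that the constants passed to Proposition~\ref{prop:descent} and to the induction hypothesis are exactly $(k^2,d(k,\tau))$, so that the recursion for $M_j$ matches.
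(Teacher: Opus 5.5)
Your proposal is correct and follows essentially the same route as the paper: the signed divisor $\Theta=\Delta-N$ from Proposition~\ref{prop:signed-target}, the bound $e\le R(k,\tau)$, descent to an effective $d(k,\tau)$-lc pair over $\Pj^{n-1}$ with degree bound $k^2$, induction, and Proposition~\ref{prop:ambient-direction}. The differences are only bookkeeping. First, the paper gets $q\notin\Supp N$ by including $\ord_{E_q}$ in $\mathcal V$, so that $\ord_{E_q}N=0$; this is the precise form of your general-member remark, since the relevant point is that $q$ lies in the isomorphism locus. Second, the paper inducts over rational $\eta$ with $\Q$-boundaries and reapplies Lemma~\ref{lem:rational} to the lower-dimensional boundary, whereas you induct on the real statement and use that $d(k,\tau)$ is the reciprocal of an integer.
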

\begin{proof}
We first prove the assertion simultaneously for all integer degree bounds and rational $\eps=\eta>0$, for $\Q$-boundaries, by induction on dimension. On smooth $\Pj^1$ a divisorial valuation is the order of a closed point, with log discrepancy one. This is the meaning of the base value $M_1$.

In dimension $j\ge2$, choose $q$ in the isomorphism locus outside $\Supp\Delta$. Apply Proposition~\ref{prop:signed-target}, prescribing the valuation $E_q$ of the exceptional divisor of the blow-up at $q$. It gives a sub-$\eta$-lc sub-pair $(Y,\Theta)$, where $\Theta=\Delta-N$ is a $\Q$-divisor with $\deg\Theta^+\le k$, such that
\[
 A(E,Y,\Theta)=A(E,X,B)\le1
\]
for every $f$-exceptional prime divisor $E$ on $X$. Also $A(E_q,Y,\Theta)=j=A(E_q,Y,\Delta)$, so $\ord_{E_q}N=0$. Since $N$ is effective, $q\notin\Supp N$. Thus this same projection point is valid for $\Theta$.

Fix an $f$-exceptional prime divisor $E$ on $X$. Proposition~\ref{prop:ramification} shows that its restriction to $K(Z)$ is nontrivial, so write $\ord_E|_{K(Z)}=e\zeta$ with $e>0$. By the definition of $a_\Theta$ and the discrepancy equality just obtained,
\[
 a_\Theta(\zeta)\le\frac{A(E,Y,\Theta)}e
 =\frac{A(E,X,B)}e\le\frac1e\le1.
\]
If $\zeta$ is the valuation of a prime divisor on $Z=\Pj^{j-1}$, then $a_0(\zeta)=1$. This includes all such $\zeta$ for $j=2$. Otherwise $j\ge3$, and Proposition~\ref{prop:descent} constructs a projective birational morphism $g\colon T\to Z$ and an effective boundary $B_T$ such that $(T,B_T)$ is $d(k,\eta)$-lc, $K_T+B_T$ is nef over $Z$, $\deg g_*B_T\le k^2$, and $\zeta$ is the valuation of a $g$-exceptional prime divisor on $T$. Since $d(k,\eta)$ is rational, Lemma~\ref{lem:rational} replaces $B_T$ by a $\Q$-boundary on the same $T$, preserving these properties. The induction hypothesis applies and gives
$a_0(\zeta)\le M_{j-1}(k^2,d(k,\eta))$.
The same bound holds when $\zeta$ is represented by a prime divisor on $Z$, since then $a_0(\zeta)=1$ and $M_{j-1}(k^2,d(k,\eta))\ge1$.

Now apply Proposition~\ref{prop:ambient-direction} to the original effective pair $(X,B)$ and the morphism $f$, with $\eps=\eta$. Its hypotheses hold because $q$ lies in the isomorphism locus of $f$ and outside $\Supp\Delta$, and $E$ is an $f$-exceptional prime divisor on $X$. The upper bound in \eqref{eq:ambient-direction} gives
\[
 \begin{aligned}
 A(E,Y,0)
 &\le (1+k^2/\eta)ea_0(\zeta)\\
 &\le R(k,\eta)(1+k^2/\eta)a_0(\zeta)\\
 &\le R(k,\eta)(1+k^2/\eta)M_{j-1}(k^2,d(k,\eta))\\
 &\le M_j(k,\eta).
 \end{aligned}
\]
Here the second line uses $e\le R(k,\eta)$ from Proposition~\ref{prop:ramification}; the third uses the bound for $a_0(\zeta)$ just obtained in both cases; and the last uses the definition of $M_j$ as the ceiling of the preceding expression. This proves the rational assertion.

For an $\R$-boundary $B$, Lemma~\ref{lem:rational} with $\tau=1/\lceil1/\eps\rceil$ gives a $\tau$-lc $\Q$-boundary on the same $X$ and $f$. The divisors $E$ and their log discrepancies $A(E,Y,0)$ are unchanged. Applying the rational result proves the claim.
\end{proof}

\section{A uniform threshold and boundedness over projective space}\label{sec:boundedness}

We apply the bound for $A(E,Y,0)$ with $2k$ and $\eps/2$ in place of $k$ and $\eps$ to obtain a uniform positive number $\beta$ such that $(X,B+\beta f^*\Delta)$ is $\eps/2$-lc. We then deduce boundedness over the fixed $\Pj^n$ from boundedness of polarized pairs.

\begin{prop}[A uniform lower bound for the $\eps/2$-lc threshold]\label{prop:threshold}
Suppose $M\ge1$ bounds $A(E,Y,0)$ for every $f'$-exceptional prime divisor $E$ on $X'$, for all projective birational morphisms $f'\colon X'\to Y=\Pj^n$ and effective boundaries $B'$ such that $(X',B')$ is $\eps/2$-lc, $K_{X'}+B'$ is nef over $Y$, and $\deg f'_*B'\le2k$. Then every pair and morphism in Setup~\ref{setup:source} satisfies
\[
 (X,B+t f^*\Delta)\text{ is }\eps/2\text{-lc}
 \quad\text{for }0\le t\le\eps/(2kM).
\]
\end{prop}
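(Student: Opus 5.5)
The plan is to argue by contradiction: assume the threshold is small, extract a divisor that computes it onto a model in the auxiliary class, and let the bound $M$ on $A(E,Y,0)$ contradict the assumption. Note first that $f^*\Delta$ is $\R$-Cartier, since $\Delta$ lives on $\Pj^n$, and that $f^*\Delta$ is $f$-numerically trivial. Put $t_0=t_{\eps/2}(X,B;f^*\Delta)$ in the notation of Definition~\ref{defn:margin}. By convexity of the $\eps/2$-lc condition in $t$, it suffices to show $t_0\ge\eps/(2kM)$. If $\Delta=0$ there is nothing to prove, so assume $k\ge d>0$, and suppose for contradiction that $t_0<\eps/(2kM)$.

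\emph{Step 1: the threshold is computed by a divisor.} On a fixed log resolution of $(X,\Supp B\cup\Supp f^*\Delta)$, the discrepancies of $(X,B+tf^*\Delta)$ are affine in $t$. By \eqref{eq:snc}, $\eps/2$-lc-ness can be checked on the coefficients of that resolution. Hence $t_0$ is finite, $(X,B+t_0f^*\Delta)$ is $\eps/2$-lc, and some prime divisor $E$ on the resolution satisfies
\[
 A(E,X,B+t_0f^*\Delta)=\eps/2 .
\]
\emph{Non-exceptional case.} Suppose $E$ is a divisor on $X$ that is not $f$-exceptional. Its coefficient in $B+t_0f^*\Delta$ is at most $(1-\eps)(1+t_0)$. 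Since $k\ge1$ and $M\ge1$, we have $t_0<\eps/2$, so this coefficient is strictly less than $1-\eps/2$, which is impossible. \emph{Exceptional over $X$.} The divisor $E$ cannot be a divisor on $X$ not contracted by $f$, so either $E$ is $f$-exceptional on $X$ or $E$ is exceptional over $X$. In the latter case, $E$ has center of codimension $\ge2$ on $X$.

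\emph{Step 2: extraction.} If $E$ is exceptional over $X$, I would extract it by running a relative MMP over $X$, as in the proof of Proposition~\ref{prop:effective}. Take a log resolution $W\to X$ on which $E$ appears, and write $K_W+\Gamma-G$ for the crepant pullback of $K_X+B+t_0f^*\Delta$, with $\Gamma,G\ge0$ and no common components. Here $\Gamma$ is an $\eps/2$-lc SNC boundary and $E$ appears in $\Gamma$ with coefficient $1-\eps/2$. Running the $(K_W+\Gamma)$-MMP over $X$ with scaling terminates by \cite{BCHM10}, and as in Proposition~\ref{prop:effective} the divisor $E$ survives. On the output $X'\to X$, the divisor $G_{X'}$ is effective, exceptional over $X$, and nef over $X$, so the negativity lemma forces $G_{X'}=0$. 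Thus
\[
 K_{X'}+B'=\pi^*(K_X+B+t_0f^*\Delta).
\]
This is $\eps/2$-lc, nef over $Y$ (because $K_X+B$ is $f$-nef and $f^*\Delta$ is $f$-trivial), and satisfies
\[
 \deg f'_*B'=(1+t_0)\deg\Delta\le2k .
\]
If $E$ is already $f$-exceptional on $X$, simply take $X'=X$ and $B'=B+t_0f^*\Delta$. In either case $E$ is an $f'$-exceptional prime divisor on $X'$.

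\emph{Step 3: contradiction.} By hypothesis $A(E,Y,0)\le M$. Lemma~\ref{lem:degree} gives
\[
 \ord_E(f^*\Delta)\le k\,A(E,Y,0)\le kM .
\]
Using $A(E,X,B)\ge\eps$, we get
\[
 \eps/2=A(E,X,B)-t_0\ord_E(f^*\Delta)\ge\eps-t_0kM>\eps/2,
\]
a contradiction.

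The main obstacle is Step 2: producing a model in the auxiliary class that extracts exactly $E$, with a crepant and relatively nef log canonical divisor and with no $\Q$-factoriality assumed on $X$. I would handle it by making the correction divisor $G$ vanish through the negativity lemma over $X$, and by quoting the survival argument for $E$ from Proposition~\ref{prop:effective}.
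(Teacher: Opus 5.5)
Your proposal is correct and follows essentially the paper's route: compute the $\eps/2$-lc threshold by a divisor on one log resolution, extract that divisor crepantly over $X$ so the resulting pair lies in the auxiliary class (pushforward degree $(1+t_0)\deg\Delta\le2k$), and combine the bound $M$ with Lemma~\ref{lem:degree}. The differences are only in presentation: you argue by contradiction, you build the extraction by hand with an MMP over $X$ and the negativity lemma where the paper cites \cite[Corollary~1.4.3]{BCHM10}, and you treat non-exceptional divisors by a coefficient count where the paper simply uses $A(F,Y,0)=1$.
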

\begin{proof}
If $\Delta=0$, there is nothing to prove. Otherwise resolve $X$, $B$ and $D=f^*\Delta$ on one smooth model $p\colon W\to X$, and write $K_W+B_W=p^*L$. Let $F_j$ be the components of the reduced SNC divisor $\Supp B_W\cup\Supp p^*D\cup\Exc(p)$, $a_j=A(F_j,X,B)\ge\eps$, and $d_j=\ord_{F_j}(p^*D)\ge0$. Set
\[
 t_0=\min_{d_j>0}\frac{a_j-\eps/2}{d_j}>0.
\]
The SNC inequality shows that $t_0=t_{\eps/2}(X,B;D)$ in Definition~\ref{defn:margin}, and $(X,B+tD)$ is $\eps/2$-lc for all $0\le t\le t_0$. Some $F$ attains the minimum, so
\begin{equation}\label{eq:threshold-witness}
 A(F,X,B+t_0D)=\eps/2,\qquad t_0\ord_F(\Delta)\ge\eps/2.
\end{equation}
Thus $t_0$ is computed by finitely many prime divisors on this one log resolution.

If $t_0>1$, the conclusion follows. Otherwise $(X,B+t_0D)$ is $\eps/2$-lc, its log canonical divisor is nef over $Y$, and its pushforward boundary has degree at most $2k$. If $F$ is a prime divisor on $Y$, then $A(F,Y,0)=1$. If it is exceptional over $Y$ and already lies on $X$, no extraction is needed. Otherwise extract it crepantly over $X$ by \cite[Corollary~1.4.3]{BCHM10}. The effective pair is klt, and the selected log discrepancy is at most one, so the prescribed extraction has effective crepant boundary; the condition concerning non-klt centers is vacuous. The resulting pair is $\eps/2$-lc, its log canonical divisor is nef over $Y$, and its pushforward boundary is unchanged. In the exceptional case, $F$ is now a prime divisor on a model satisfying the hypotheses; in either case, $A(F,Y,0)\le M$. Lemma~\ref{lem:degree} and \eqref{eq:threshold-witness} give
$t_0kM\ge t_0\ord_F(\Delta)\ge\eps/2$.
The extraction is used only to bound $A(F,Y,0)$; the asserted interval applies to the given pair on $X$.
\end{proof}

\begin{prop}[Boundedness from a uniform threshold]\label{prop:recovery}
Fix $n,k,\eps$ as in Setup~\ref{setup:source}. Suppose a collection of pairs and morphisms satisfying that setup admits a uniform rational $0<\beta\le1$ and a uniform $\eta>0$ such that $(X,B+\beta f^*\Delta)$ is $\eta$-lc. Then the morphisms $f\colon X\to\Pj^n$ form a bounded family over the fixed $\Pj^n$, and their $f$-exceptional prime divisors, with the reduced induced scheme structure on $X$, form a bounded family. More precisely, choose any integer $m\ge4/\beta$ and put
\[
 q:=\lceil m(1+\beta)k\rceil.
\]
Then there is an effective boundary $C_X$ on the same $X$ such that $(X,C_X)$ is $\eta_0$-lc for $\eta_0=\min\{\eta,1/2\}$, $K_X+C_X$ is $\R$-Cartier and nef over $Y$, $\deg f_*C_X=q/m$, and every nonzero coefficient of $C_X$ is at least $1/m$. Every $f$-exceptional prime divisor on $X$ has positive coefficient in $C_X$.
\end{prop}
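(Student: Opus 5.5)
The plan is to build $C_X$ on the given $X$ in three layers. The first keeps the large coefficients of $B$. The second trades the small coefficients for general pullbacks of hyperplanes. The third adds a controlled multiple of a pullback that contains every $f$-exceptional divisor with positive coefficient. Throughout I use the uniform threshold: $(X,B+\beta f^*\Delta)$ is $\eta$-lc. One consequence I will rely on, which should follow from the hypothesis via the argument in the proof of Proposition~\ref{prop:threshold}, is that replacing $\beta f^*\Delta$ by $\beta' f^*D$, with $D\ge0$ of degree at most $k$ and $0\le\beta'\le\beta$, costs at most a fixed fraction of $\eta$. Relative nefness is automatic for every layer: each added piece is either $f^*$ of something, which is $f$-trivial, or comes with a compensating $f^*$-term.

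\textbf{Step 1 (small coefficients).} Write $B=B_{\ge}+B_{<}$, where $B_<$ collects the components with coefficient $<1/m$. Since $B_<$ has coefficients $<1/m\le\beta/4$, its non-exceptional part satisfies $B_<\le f^{-1}_*f_*B_<$ and $f_*B_<\le\Delta$. On the exceptional components I will use the correction $F=f^*(K_Y+\Delta)-L\ge0$ of Lemma~\ref{lem:comparison} to dominate $B_<$ by $f^*f_*B_<$ plus a nonnegative $f$-exceptional divisor. Then I set
\[
 C_1=B_{\ge}+\tfrac{\beta}{2}f^*\Delta+\tfrac1m\textstyle\sum_{i=1}^{q_1}f^*H_i ,
\]
with the $H_i$ general hyperplanes and $q_1$ chosen to absorb $f_*B_<$ and to round the degree. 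The general pullbacks are reduced, irreducible and avoid all log resolution strata by Lemma~\ref{lem:free-members}, so adding them with coefficient $1/m\le1/4$ preserves $\eta_0$-lc. The bound $\eta_0\le1/2$ is what gives room here. Moreover $B\le B_{\ge}+\beta f^*\Delta$ up to the removed small part, so $(X,C_1)$ is still $\eta_0$-lc. Removing $B_<$ destroys nefness, and I restore it by rewriting $K_X+C_1$ as $K_X+B+(\text{$f$-trivial})+(\text{effective }f^*$-difference$)$. I expect this to need care, because $-B_<$ is not $f$-nef; this is exactly where the slack $\beta/2$ of $f^*\Delta$ is spent. Every non-exceptional component then has coefficient $\ge1/m$, since $(1+\beta/2)b_i\ge1/m$ whenever $b_i\ge1/m$, and otherwise the component has been removed.

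\textbf{Step 2 (exceptional divisors), the main obstacle.} An $f$-exceptional $E$ may have zero coefficient in $B$, and $\ord_E f^*\Delta$ may also vanish. To fix this I take a general hypersurface $D_0\subset Y$ of small degree $r$ containing the finitely many centers $f(E)$, which have codimension $\ge2$. For instance I can take $D_0$ to be a general member of the linear system of hypersurfaces of degree $\le k$ through these centers, if such exist uniformly. Then $\ord_Ef^*D_0\ge1$, and I add $\tfrac{\beta}{2k}f^*D_0$, making the coefficient of $E$ at least $\beta/(2k)$. If $m$ is large relative to $k$, this is $\ge1/m$; otherwise I round up by adding $1/m$ times the reduced divisor $\Supp f^*D_0$, controlled by the same threshold. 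The difficulty is twofold. First, the centers $f(E)$ need not lie on low-degree hypersurfaces uniformly, so I would first try replacing $D_0$ by $\Delta$-independent members of $|\cO_Y(k)|$ through a general point of each center, using the freedom that only positivity of $\ord_E$ is needed. Second, the threshold for $f^*D_0$ must be uniform; I would use the hypothesis in the form $(X,B+\beta f^*\Delta')$ $\eta$-lc for all such configurations, or a multiplier-ideal argument $\cJ(X,B;\beta' f^*D_0)=\cO_X$ near the generic point of $E$. If that fails, the fallback is to replace the whole construction by the multiplier ideal of $\beta f^*\Delta$ and use its vanishing locus, which contains $f(E)$ whenever $F$ has positive coefficient at $E$.

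\textbf{Step 3 (bookkeeping and boundedness).} The degree $\deg f_*C_X$ is adjusted to exactly $q/m$ with $q=\lceil m(1+\beta)k\rceil$ by adding further general $\tfrac1m f^*H_i$. This uses $\deg\bigl((1+\beta)\Delta\bigr)\le(1+\beta)k$. The result $C_X$ has coefficients $\ge1/m$, is $\eta_0$-lc, and $K_X+C_X$ is nef over $Y$. Boundedness then follows as sketched in the introduction. Adding $f^*$ of a general hyperplane section with large enough coefficient gives a polarized $\eta_0$-lc pair with coefficients in a finite-gap set and bounded volume, to which \cite[Theorem~1.5]{BirkarPolarised} applies. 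Retaining one pullback recovers $f^*\cO_Y(1)$ and hence $f$, and the exceptional divisors are bounded as components of $\Supp C_X$ in the bounded log family.
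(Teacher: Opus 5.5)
\textbf{There is a genuine gap in Step 1, and it breaks the whole layered construction.}

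Every term you add to $B_{\ge}$ is a pullback from $Y$: $\frac\beta2 f^*\Delta$ and $\frac1m f^*H_i$ here, and $f^*D_0$ in Step 2. Pullbacks are $f$-numerically trivial, so $K_X+C_1\equiv_Y K_X+B_{\ge}$. The ``slack'' in $f^*\Delta$ therefore cannot restore relative nefness. Writing the difference as an effective divisor would not help either, since effective does not mean $f$-nef.

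A concrete failure comes from Proposition~\ref{prop:examples} with $\Delta=\frac3{m'}\sum_{i=1}^{m'}H_i$ and $m'$ large. These pairs satisfy the hypotheses with a uniform $\beta$, yet all coefficients of $B$ eventually drop below $1/m$. Then $B_{\ge}=0$ and $K_X+C_1\equiv_Y K_X\equiv_Y E$, which has degree $-1$ on a line $\ell$ in a fiber of $E\to L$. The small components carry all of the relative positivity, since $B\cdot\ell=3$. Adding the reduced $\Supp f^*D_0\supseteq E$ in Step 2 only makes this worse.

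Step 1 has two further defects:
\begin{itemize}
\item $X$ is not $\Q$-factorial, so $K_X+B_{\ge}$ need not be $\R$-Cartier.
\item A component with $b_i<1/m$ reappears inside $\frac\beta2 f^*\Delta$ with coefficient $\frac\beta2 b_i<1/m$, which contradicts your claim about the coefficients.
\end{itemize}
Conversely, the obstacle you raise in Step 2 does not exist. The inequality $F\ge0$ from Lemma~\ref{lem:comparison} gives $\ord_E\Delta\ge A(E,Y,0)-1+\operatorname{coeff}_EB\ge1$ for every $f$-exceptional $E$. So $f^*\Delta$ already contains every exceptional divisor, and no hypersurface through the centers $f(E)$ is needed.

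\textbf{The missing idea} is to change $B$ only by a pullback:
\[
C_X=B+f^*(D/m-\Delta).
\]
This keeps $K_X+C_X$ $\R$-Cartier and in the relative numerical class of $K_X+B$. On non-exceptional primes it cancels the coefficients of $\Delta$ exactly, leaving multiples of $1/m$. The only remaining issue is effectivity along exceptional divisors. That requires $D$ to vanish along $f(E)$ to order roughly $m\ord_E\Delta$, while still having controlled singularities and degree.

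The paper takes $D$ general among the sections of $\frakj(qH)$, where $\frakj=\cJ(Y,m(1+\beta)\Delta)$. This choice does three things.
\begin{itemize}
\item The upper estimate $v(\frakj)\le m(1+\beta)v(\Delta)$ bounds the fixed part of the crepant boundary by $B_W+\beta g^*\Delta$, so that part is $\eta$-lc. The moving part has coefficient $1/m\le1/4$.
\item The lower estimate $v(\frakj)>m(1+\beta)v(\Delta)-A(v,Y,0)$, combined with $\ord_E\Delta\ge a_E-1+b_E$ and $m\ge4/\beta$, gives $\operatorname{coeff}_EC_X>b_E+\beta/2\ge2/m$.
\item Nadel vanishing and Castelnuovo--Mumford regularity make $\frakj(qH)$ globally generated for $q=\lceil m(1+\beta)k\rceil$, which gives $\deg f_*C_X=q/m$ exactly.
\end{itemize}
Your ``fallback'' multiplier ideal points in this direction, but it is neither the right ideal nor used in this way.

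Your Step 3 is essentially the paper's argument. You do also need the cone theorem to make $K_X+C_X+2nf^*H$ globally nef before applying the polarized boundedness theorem.
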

\begin{proof}
Put $Q=(1+\beta)\Delta$ and $\frakj=\cJ(Y,mQ)$. The multiplier-ideal estimates and generation theorem give
\begin{equation}\label{eq:multiplier}
 mv(Q)-A(v,Y,0)<v(\frakj)\le mv(Q),\qquad
 \frakj(qH)\text{ globally generated}.
\end{equation}
We recall their scope, because $Q$ need not be lc. For an effective $\Q$-divisor $Q$, the strict lower estimate is the log-resolution round-down definition. Repeated subadditivity \cite[Theorem~2.3]{DEL00Real}, after clearing denominators by $a$, gives
$\cO_Y(-amQ)=\cJ(amQ)\subseteq\cJ(mQ)^a$, hence the upper estimate. The generation assertion follows from Nadel vanishing and Castelnuovo--Mumford regularity, as in \cite[Corollary~1.5]{DEL00Real}, since
$qH-K_Y-nH-mQ\sim_\R(q+1-m(1+\beta)\deg\Delta)H$ is ample. For an $\R$-divisor $Q$, approximate its coefficients rationally from above. On a fixed log resolution all the finitely many round-down coefficients stabilize, including those with integral limits. The ample inequality persists. This proves generation and the upper estimate by passage to the limit; the lower estimate follows on a resolution also representing the specified valuation. No lc premise for $Q$ enters these arguments.

Take a sufficiently general effective divisor $D$ in the linear system defined by the global sections of $\frakj(qH)$ and define
\begin{equation}\label{eq:floor-replacement}
 C_X=B+f^*(D/m-\Delta).
\end{equation}
Here $D$ is an effective divisor with integral coefficients; fixed components may occur. The divisor $K_X+C_X$ is $\R$-Cartier and retains the numerical class of $K_X+B$ over $Y$. Choose a common log resolution and principalization $g\colon W\to Y$ factoring through $p\colon W\to X$, so that $\Supp B_W\cup\Supp g^*\Delta\cup\Supp F_{\frakj}\cup\Exc(g)$ is SNC, where $\frakj\cO_W=\cO_W(-F_{\frakj})$ and $K_W+B_W=p^*(K_X+B)$. The total transform of $D$ is $F_{\frakj}+M_D$. After subtracting $F_{\frakj}$, the pullbacks of the sections of $\frakj(qH)$ define a base point free linear system, and $M_D$ is a general member of this system. By Lemma~\ref{lem:free-members}, $M_D$ is reduced and smooth, has no component in common with this fixed divisor, and its union with that divisor is SNC. The divisor $C_W$ defined by $K_W+C_W=p^*(K_X+C_X)$ is
\[
 B_W-g^*\Delta+F_{\frakj}/m+M_D/m.
\]
For each component of this fixed reduced divisor, the upper estimate in \eqref{eq:multiplier} bounds its coefficient in $C_W$ above by its coefficient in $B_W+\beta g^*\Delta$, hence by $1-\eta$. Each component of $M_D$ has coefficient $1/m\le1/4$ in $C_W$. Formula \eqref{eq:snc} gives $A(v,X,C_X)\ge\eta_0$ for every divisorial valuation $v$.

Effectivity on $X$ needs a separate argument. For an $f$-exceptional prime divisor $E$ on $X$, write $a_E=A(E,Y,0)\ge2$ and $b_E=\operatorname{coeff}_EB\ge0$. From $F\ge0$ and its coefficient identity,
$\ord_E(\Delta)\ge a_E-1+b_E$. The lower multiplier estimate therefore gives
\begin{align*}
 \operatorname{coeff}_EC_X
 &>b_E+\beta \ord_E(\Delta)-a_E/m\\
 &\ge b_E+\beta(a_E-1)-a_E/m
 \ge b_E+\beta/2.
\end{align*}
The last inequality follows from $a_E\ge2$ and $m\ge4/\beta$. At a nonexceptional prime the strict transform of $\Delta$ cancels in \eqref{eq:floor-replacement}, leaving its coefficient in $D/m$, either zero or at least $1/m$. Since $\beta/2\ge2/m$, this proves effectivity and the asserted lower bound for the nonzero coefficients. The pushforward is $D/m$.

We now apply the polarized boundedness theorem, including its numerical hypotheses. Put $H_X=f^*H$. Choose $r$ general pullback hyperplanes and give each coefficient $a=2n/r\le1-\eta_0$. On the chosen resolution their union with $\Supp C_W$ is SNC by successive applications of Lemma~\ref{lem:free-members}; they have no common prime components with one another or with $\Supp C_W$. With $G$ their weighted sum and $\Theta_X=C_X+G$, the SNC calculation shows that $(X,\Theta_X)$ is $\eta_0$-lc and that every nonzero coefficient of $\Theta_X$ is at least $\min\{1/m,a\}$. The cone theorem as in Proposition~\ref{prop:signed-target} shows that $K_X+\Theta_X$, which is $\R$-linearly equivalent to $K_X+C_X+2nH_X$, is globally nef. The divisor $F_C=f^*(K_Y+f_*C_X)-(K_X+C_X)$ is effective by Lemma~\ref{lem:comparison}. For a further general hyperplane pullback $H_*$,
\[
 \vol(K_X+\Theta_X+H_*)
 =\vol((q/m+n)H_X-F_C)\le(q/m+n)^n.
\]
The divisor $H_*$ is nef and big. It is an effective Cartier divisor whose underlying closed subscheme is integral: a general target hyperplane contains no exceptional center, so its pullback has one prime component of multiplicity one; an effective Cartier divisor on a noetherian normal scheme is $S_1$ by \cite[Tag~0B3S]{StacksFoundations}, and generic reducedness together with $S_1$ implies reducedness by \cite[Tag~0344]{StacksFoundations}. Thus \cite[Theorem~1.5]{BirkarPolarised} bounds the couples $(X,\Supp(\Theta_X+H_*))$.

To recover the morphism, retain $H_*$ as a component of the reduced divisor in these bounded families of couples. Reduced irreducible components of fibers of a fixed projective finite-type family are parameterized by finitely many finite-type families: take the finitely many geometric generic components after finite covers, spread their reduced closures, shrink to flat geometrically integral fibers, and repeat on the lower-dimensional complement. The generic fiber component and geometric integrality statements used here are recorded in \cite[Tags~054Y, 0574, 0553]{StacksComponents}. This is noetherian induction, and therefore terminates. The construction retains the embedded reduced component, without normalization.

Stratify the resulting families of varieties and divisors to be flat, and write $h\colon\mathcal X\to S$ for one such family. Retain the open locus where the chosen component is relative effective Cartier; fiberwise Cartierness gives this open neighborhood by \cite[Tag~062Y]{StacksMarking}, and properness permits deleting the image of its complement. Denote the chosen relative effective Cartier divisor by $\mathcal H_*\subset\mathcal X$. Its line bundle $\mathcal M=\cO_{\mathcal X}(\mathcal H_*)$ restricts to $\cO_X(H_X)$ on the desired fibers. After further stratification, cohomology and base change give finitely many strata on which $h_*\mathcal M$ is locally free and commutes with base change; see \cite[Tag~07VJ]{StacksMarking}. Retain the strata on which its rank is $n+1$ and the open locus where the evaluation map $h^*h_*\mathcal M\to\mathcal M$ is surjective. Every desired fiber remains. Indeed $f_*\cO_X=\cO_Y$ by \cite[Tag~0AY8]{StacksFoundations}: $f$ is proper birational, $X$ is integral, and $Y$ is normal and integral. Trivializing a line bundle on $Y$ and gluing gives $f_*f^*\cO_Y(1)=\cO_Y(1)\otimes f_*\cO_X$. Hence the pulled coordinate sections form a complete basis of $H^0(X,H_X)$, of dimension $n+1$, and generate $H_X$. Pass to the scheme over $S$ parameterizing ordered bases of $h_*\mathcal M$; locally on $S$, it is $S\times\operatorname{GL}_{n+1}$. Its universal ordered basis defines a morphism from the pulled-back family to the fixed $\Pj^n$. The basis pulled back from the target coordinates recovers exactly the given $f$. These parameter spaces are of finite type, proving boundedness of the morphisms over the fixed $\Pj^n$.

Finally, every $f$-exceptional prime divisor on $X$ has positive coefficient in $C_X$, so it is among the reduced components of the divisors in the bounded families of couples. The component construction just given bounds its reduced induced scheme structure on $X$, including all nonnormal fibers. This proves the assertion about exceptional divisors as schemes.
\end{proof}

\begin{proof}[Proof of Theorem~\ref{thm:main}]
For $n=1$, a projective birational morphism of normal integral curves to $\Pj^1$ is proper and quasi-finite, hence finite, and is an isomorphism by normality of the target. Assume $n\ge2$. Theorem~\ref{thm:ambient} applied with $2k$ and $\eps/2$ in place of $k$ and $\eps$ gives
\[
 M=M_n\left(2k,\frac1{\lceil2/\eps\rceil}\right).
\]
Proposition~\ref{prop:threshold} shows that $(X,B+\beta f^*\Delta)$ is $\eps/2$-lc for the positive rational choice
$\beta=1/\lceil2kM/\eps\rceil$. Proposition~\ref{prop:recovery} proves both boundedness assertions for the given varieties $X$ and morphisms $f$. Finally Lemma~\ref{lem:comparison} gives
$A(v,X,B)\ge A(v,Y,0)-v(\Delta)$, while $(X,B+\beta f^*\Delta)$ being $\eps/2$-lc gives
$A(v,X,B)\ge\eps/2+\beta v(\Delta)$. Eliminating $v(\Delta)$ gives \eqref{eq:main-comparison}.
\end{proof}

\begin{proof}[Proof of Corollary~\ref{cor:bounded-divisor-threshold}]
First assume $n\ge2$ and take $t=\beta/(2(1+\beta)\ell)$, where $\beta$ is the constant in Theorem~\ref{thm:main}. The discrepancy comparison~\eqref{eq:main-comparison} implies
\[
 A(v,X,B)\ge\frac{\beta}{1+\beta}A(v,Y,0)
\]
for every normalized divisorial valuation $v$. Lemma~\ref{lem:degree} applies to any effective $D$ of degree at most $\ell$, and hence
\[
 t\,v(D)\le\frac{\beta}{2(1+\beta)}A(v,Y,0)
 \le\frac12 A(v,X,B).
\]
Every $\R$-divisor on $Y=\Pj^n$ is $\R$-Cartier, so its pullback is defined, and $v(f^*D)=v(D)$. Thus
\[
 A(v,X,B+t f^*D)
 =A(v,X,B)-t\,v(D)
 \ge\frac12 A(v,X,B)\ge\eps/2.
\]
This proves the assertion simultaneously for all such $D$. If $D=0$, its threshold is $\infty$.

For $n=1$, the morphism $f$ is an isomorphism. Every normalized divisorial valuation is the order of a closed point, where the coefficient of $D$ is at most $\ell$. Consequently $t=\eps/(2\ell)$ leaves every log discrepancy at least $\eps/2$.

For the polarized formulation, write $H=\cO_Y(1)$ and $A\sim aH$ with $a\in\mathbb Z_{>0}$. The pseudo-effective cone of $\Pj^n$ is the nonnegative ray generated by $H$, so $A-D$ is pseudo-effective precisely when $\deg D\le a$. Since $a^n=A^n\le r$, taking $\ell=\lceil r^{1/n}\rceil$ gives the required uniform constant. In particular, an ample $A-D$ also satisfies the hypothesis.
\end{proof}

\begin{cor}[Log resolutions in bounded families]\label{cor:references}
For fixed $n,k,\eps$, there are finitely many log smooth projective families of couples $(S,\Sigma)$ with projective birational morphisms $S\to\Pj^n$ on their fibers such that, for every morphism $f\colon X\to\Pj^n$ in Setup~\ref{setup:source}, the valuations of all $f$-exceptional prime divisors on $X$ are represented simultaneously by distinct components of $\Sigma$ on a single fiber. The number of these exceptional divisors is uniformly bounded; with their reduced induced scheme structures on $X$, they are in particular birationally bounded.
\end{cor}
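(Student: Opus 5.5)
The plan is to deduce the corollary from the bounded families produced in the proof of Proposition~\ref{prop:recovery}, combined with Theorem~\ref{thm:ambient}. For $n=1$ there is nothing to prove, since $f$ is an isomorphism and has no exceptional divisors. So assume $n\ge2$.

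First I obtain the bounded families. By Theorem~\ref{thm:main} and Proposition~\ref{prop:recovery}, every $f$ in Setup~\ref{setup:source} gives a couple $(X,\Supp(\Theta_X+H_*))$ in finitely many bounded families. These families retain the morphism to $\Pj^n$ and the reduced components. Every $f$-exceptional prime divisor has positive coefficient in $C_X$, so it is a component of $\Supp\Theta_X$. After the stratification in that proof, we have finitely many flat families $h\colon\mathcal X\to S$. Each carries a morphism $\mathcal X\to\Pj^n\times S$ and a reduced divisor $\mathcal D$, and every desired $(X,f)$ appears as a geometric fiber with its exceptional divisors among the fiber components of $\mathcal D$.

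Next I take simultaneous log resolutions. By noetherian induction on $S$, after finite covers and stratification, I take a log resolution $\mathcal S\to\mathcal X$ of $(\mathcal X,\mathcal D)$ over the generic point of each stratum. I spread it out over an open subset where $\mathcal S\to S$ is smooth and the strict transform of $\mathcal D$ plus the exceptional locus has relative SNC. On the complement I repeat the construction; the process terminates. On each fiber, the resolution $S_s\to X_s$ composed with $f$ gives a projective birational morphism $S_s\to\Pj^n$. I let $\Sigma$ be the union of the strict transform of $\mathcal D$ and the exceptional divisors. Each $f$-exceptional prime divisor on $X_s$ then has its strict transform as a distinct component of $\Sigma_s$, and that strict transform represents the same valuation. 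This is the step I expect to be the main obstacle. Fiberwise strict transforms must stay irreducible and distinct, and the fiber of $\mathcal S$ must be a log resolution of the fiber couple rather than only of the total space. To handle this, I would first pass to finite covers so that the generic-fiber components of $\mathcal D$ are geometrically integral, as in the component argument of Proposition~\ref{prop:recovery}. I would then use generic smoothness of the relative SNC condition to shrink $S$.

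Finally, the counting and birational boundedness. The number of components of $\Sigma_s$ is bounded on each stratum, since the relative SNC divisor has finitely many components, each with finitely many geometric fiber components after the covers. With finitely many families, this bounds the number of exceptional divisors uniformly. Each exceptional divisor, with its reduced structure on $X$, is birational to its component of $\Sigma_s$. These components form a bounded family of smooth projective varieties. Alternatively, the divisors are bounded directly as schemes by Theorem~\ref{thm:main}, and bounded implies birationally bounded.
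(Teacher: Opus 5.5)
Your proposal is correct and follows essentially the same route as the paper: start from the bounded families of morphisms to $\Pj^n$ that carry the relevant reduced divisors, and resolve the generic fiber after a finite base extension over which the geometric components are defined. Then spread out and shrink until the fibers are smooth with relative SNC boundary, repeat by noetherian induction, and read off the bound on the number of components. The only cosmetic difference is which divisors you carry: the paper retains just the reduced $f$-exceptional divisors, with $\Sigma$ their strict transforms plus the exceptional divisors of the map to $\Pj^n$, whereas you carry the whole support $\mathcal D$, which changes nothing.
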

\begin{proof}
Start with the bounded families of morphisms just proved, together with the reduced exceptional divisors. On each parameter component resolve the geometric generic fiber and these divisors, after a finite extension over which their geometric components are defined. Spread this resolution and shrink so its fibers are smooth and the union of the strict transforms of the chosen divisors and the exceptional divisors of the map to $\Pj^n$ is relative SNC. Repeat on the complement. Noetherian induction yields finitely many such families of resolutions. A resolution retains the strict transform of every $f$-exceptional prime divisor on $X$, so a single fiber represents all of them. The number of components of $\Sigma$ is bounded in these finite-type families.
\end{proof}
\section{Quantitative consequences and criteria}\label{sec:applications}

The recursive constants admit explicit power bounds. We also relate boundedness over the fixed $\Pj^n$ to a uniform positive lower bound for an $\eta$-lc threshold and to an inequality between $A(v,X,B)$ and $A(v,Y,0)$.

\begin{proof}[Proof of Theorem~\ref{thm:explicit}]
In the recursion of Theorem~\ref{thm:ambient}, elementary rounding inequalities give
\[
 R(k,\eta)\le6k\eta^{-2},\qquad
 1+k^2/\eta\le2k^2\eta^{-1},\qquad
 d(k,\eta)\ge\eta^2/(12k).
\]
Since the expression inside the outer ceiling is at least one,
$M_j(k,\eta)\le24k^3\eta^{-3}M_{j-1}(k^2,d(k,\eta))$.
Put $a_1=b_1=0$ and $C_1=1$. The recurrences
\[
 a_j=3+2a_{j-1}+b_{j-1},\quad b_j=3+2b_{j-1},\quad
 C_j=24C_{j-1}12^{b_{j-1}}
\]
then prove $M_j(k,\eta)\le C_jk^{a_j}\eta^{-b_j}$ by induction. Their solutions for $j\ge2$ are exactly the constants in the theorem. Since $\eps/2\le1/\lceil1/\eps\rceil\le\eps$, the estimate for $A(E,Y,0)$ follows.

For the lower bound for $A(v,X,B)$, set $\tau=1/\lceil2/\eps\rceil$ and $M=M_n(2k,\tau)$. Then $\eps/3\le\tau\le\eps/2$ and
\[
 M\le C_n2^{a_n}3^{b_n}k^{a_n}\eps^{-b_n}.
\]
Use Proposition~\ref{prop:threshold} with $\beta=\tau/(kM)$. It is rational, at most one, and at most $\eps/(2kM)$. Formula \eqref{eq:main-comparison} gives
\[
 A(v,X,B)\ge\frac\beta{1+\beta}A(v,Y,0)
 \ge\frac\eps{6kM}A(v,Y,0),
\]
which is the asserted estimate.
\end{proof}

\begin{prop}[Equivalent boundedness criteria]\label{prop:criteria}
For any collection of pairs and morphisms satisfying Setup~\ref{setup:source}, the following are equivalent:
\begin{enumerate}
\item the morphisms $f\colon X\to\Pj^n$ form a bounded family over the fixed $\Pj^n$;
\item there are uniform rational $\beta>0$ and $\eta>0$ such that $(X,B+\beta f^*\Delta)$ is $\eta$-lc;
\item there is a uniform $\delta>0$ with $A(v,X,B)\ge\delta A(v,Y,0)$ for every divisorial valuation $v$ and every pair and morphism in the collection.
\end{enumerate}
In (2), the stronger inequality $(1+\beta)A(v,X,B)\ge\beta A(v,Y,0)+\eta$ holds. In (3), any rational $0<\beta\le\min\{1,\delta/(2k)\}$ gives (2) with $\eta=\delta/2$.

A sufficient local criterion for (3) is the following. Fix $0<c<1$ and $\theta>0$. For each pair and morphism in the collection and each $v$, suppose there is a projective birational morphism $\rho\colon S\to Y$ with $S$ smooth, together with a reduced SNC divisor $\Sigma$ containing every $\rho$-exceptional prime divisor. Write $\Delta_S$ for the strict transform of $\Delta$, and put
\[
 R=\sum_{D\not\subseteq\Supp\Sigma}\operatorname{coeff}_D(\Delta_S)D,
\]
where the sum runs over the prime components of $\Delta_S$ not contained in $\Supp\Sigma$. Suppose $(S,\Sigma+R/c)$ is lc near the center of $v$, and every component $F_i$ of $\Sigma$ through that center satisfies
$A(F_i,X,B)\ge\theta A(F_i,Y,0)$.
Then (3) holds with $\delta=\min\{1-c,\theta\}$. The models $S\to Y$ may depend on $v$ and need not form a bounded family.
\end{prop}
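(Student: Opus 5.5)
The plan is to prove the cycle (1)$\Rightarrow$(3)$\Rightarrow$(2)$\Rightarrow$(1), and then treat the local criterion separately as a direct valuative computation.

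\emph{(2)$\Rightarrow$(1)} is exactly Proposition~\ref{prop:recovery}. \emph{The stronger inequality in (2)} follows as in the proof of Theorem~\ref{thm:main}. Lemma~\ref{lem:comparison} gives $A(v,X,B)\ge A(v,Y,0)-v(\Delta)$, and $\eta$-lc of $(X,B+\beta f^*\Delta)$ gives $A(v,X,B)\ge\eta+\beta v(\Delta)$. Multiplying the first by $\beta$ and adding eliminates $v(\Delta)$.

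\emph{(3)$\Rightarrow$(2)}: take rational $0<\beta\le\min\{1,\delta/(2k)\}$. Lemma~\ref{lem:degree} gives $\beta v(\Delta)\le\beta kA(v,Y,0)\le\frac{\delta}{2}A(v,Y,0)\le\frac12A(v,X,B)$. Hence
\[
A(v,X,B+\beta f^*\Delta)\ge\tfrac12A(v,X,B)\ge\tfrac\delta2A(v,Y,0)\ge\tfrac\delta2,
\]
since $A(v,Y,0)\ge1$.

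\emph{(1)$\Rightarrow$(3)}: this is the main step. Boundedness gives finitely many families $\mathcal X_i\to S_i$ over $\Pj^n\times S_i$, and after stratifying we may take a simultaneous log resolution as in Corollary~\ref{cor:references}. On each fiber $X$ this gives a log smooth model $W\to X\to Y$ on which both $K_W-g^*K_Y$ and the exceptional locus are relative SNC with bounded data. Two estimates are needed.
\begin{itemize}
\item The $f$-exceptional divisors have bounded $A(E,Y,0)\le M$, by constructibility of discrepancies in the family.
\item $A(E,X,B)\ge\eps$ by hypothesis.
\end{itemize}
For a general $v$, the plan is to compare $A(v,X,B)$ and $A(v,Y,0)$ on such a resolution. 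The key inequality to aim for is $A(v,Y,0)\le C\cdot A(v,X,0)$-type control, using that $K_{X}-f^*K_Y$ has bounded coefficients. The cleaner route is through the already-proved threshold: use (1) to bound $\deg$ of exceptional data and reapply Proposition~\ref{prop:threshold} with the uniform bound $M$ on $A(E,Y,0)$. That argument only needed such an $M$ for exceptional divisors on models in the class, plus extractions, and gives (2) and hence (3) via the stronger inequality. The obstacle is that extractions leave the given bounded collection. I would instead compute $t_{\eps/2}$ directly on the bounded log resolution, where the computing divisor $F$ lies in a bounded family so that $A(F,Y,0)$ is bounded. I expect the hardest part to be making this uniform, because $\Delta$'s support is unbounded. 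To handle it, I would use that $F$ can be taken among components of $\Supp B_W\cup\Exc$ or strict transforms of $\Delta$-components. For the latter, $A(F,Y,0)=1$.

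\emph{Local criterion}: work on $S$ near the center of $v$. Write $K_S+\Delta_S+\sum(1-A(F_i,Y,\Delta))F_i=\rho^*(K_Y+\Delta)$, and use \eqref{eq:correction} with Lemma~\ref{lem:comparison} to get $A(v,X,B)\ge A(v,S,B_S)$. The coefficients of $B_S$ are $1-A(F_i,X,B)\le 1-\theta A(F_i,Y,0)$ on the $F_i$, and equal $R$ elsewhere. Write
\[
B_S\le c\,(R/c)+\sum_i(1-\theta a_i)F_i,
\]
and express this as a convex-type combination of the lc boundary $\Sigma+R/c$ and the crepant pullback data of $(Y,0)$, namely $K_S-\sum(a_i-1)F_i=\rho^*K_Y$. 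Linearity of $A$ in the boundary then gives
\[
A(v,S,B_S)\ge\min\{1-c,\theta\}A(v,Y,0),
\]
with lc of $(S,\Sigma+R/c)$ supplying nonnegativity of the remaining term. Checking that these weights combine correctly is the only delicate bookkeeping.
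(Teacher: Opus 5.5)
The gap is in (1)$\Rightarrow$(3). Your other steps match the paper: (2)$\Rightarrow$(1) via Proposition~\ref{prop:recovery} (after shrinking $\beta$ to at most one, as that proposition requires), the stronger inequality, (3)$\Rightarrow$(2), and the local criterion. The bookkeeping you leave open in the local criterion does close. Put $u_i=A(F_i,Y,0)$ and $\delta=\min\{1-c,\theta\}$. Near the center, $B_S\le R+\sum_i(1-\delta u_i)F_i=c(\Sigma+R/c)+\delta\bigl(-\sum_i(u_i-1)F_i\bigr)+(1-c-\delta)\Sigma$. At $v$, these three terms have log discrepancies $\ge0$, $=A(v,Y,0)$, and $\ge0$ respectively. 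This is equivalent to the paper's estimates $A(v,X,B)\ge j+\sum_i\alpha_ia_i-v(R)$, $v(R)\le cj$, and $A(v,Y,0)=j+\sum_i\alpha_iu_i$.

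Under (1), only $X$, $f$ and the reduced exceptional divisors vary in a bounded family. $\Supp B$ and $\Supp\Delta$ do not, since there is no lower bound on coefficients and no bound on the degree of the reduced support. A divisor $F$ computing $t_{\eps/2}(X,B;f^*\Delta)$ lives on a log resolution of $(X,B+f^*\Delta)$, not on your bounded resolution of $X$. It can be exceptional over $X$ with center in the isomorphism locus of $f$; for example, the blowup of a point where several components of $\Delta$ meet can compute the threshold. Such an $F$ is neither one of the boundedly many divisors of the family nor a strict transform of a component of $\Delta$. So your dichotomy (``$A(F,Y,0)$ bounded by constructibility, or $A(F,Y,0)=1$'') fails, and nothing in your sketch bounds $A(F,Y,0)$ for such $F$. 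That bound is exactly the nontrivial content.

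The obstacle you saw with Proposition~\ref{prop:threshold} is not real. Its hypothesis is a bound $M$ over the whole class of $\eps/2$-lc relatively nef pairs with $\deg\le2k$, not over your collection, and Theorem~\ref{thm:ambient} supplies that bound unconditionally. This is how Theorem~\ref{thm:main} is proved. It gives (2) for every collection satisfying Setup~\ref{setup:source}, so (1)$\Rightarrow$(2) is immediate.

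The paper also gives an argument independent of Theorem~\ref{thm:main}:
\begin{enumerate}
\item On the bounded family, choose a relatively very ample $A_0$.
\item Since $B\le f^*\Delta+\sum_iE_i$ with boundedly many $E_i$ of bounded degree, both $B$ and $T=f^*\Delta$ have bounded $A_0$-degree.
\item A finite linear projection defined by $A_0$ dominates each prime divisor $D$ by a Cartier divisor $D'\sim(A_0^{n-1}\cdot D)A_0$. This produces a very ample $A$ with bounded $A^n$ such that $A-B$ and $A-T$ are pseudo-effective.
\item Birkar's bound \cite[Theorem~1.8]{Birkar16SourceMargin} then gives a uniform lower bound for $\lct(X,B;T)$.
\item Convexity with the $\eps$-lc pair $(X,B)$ gives (2) with $\eta=\eps/2$.
\end{enumerate}
Some input of this strength is needed; constructibility on a bounded resolution of $X$ alone does not suffice.
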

\begin{proof}
Proposition~\ref{prop:recovery} proves (2)$\Rightarrow$(1), after decreasing $\beta$ to at most one. Combining the $\eta$-lc condition in (2) with Lemma~\ref{lem:comparison} proves the stronger displayed inequality and (2)$\Rightarrow$(3). Conversely Lemma~\ref{lem:degree} gives $v(\Delta)\le kA(v,Y,0)$. Under (3), subtraction of $\beta v(\Delta)$ gives at least $(\delta/2)A(v,Y,0)\ge\delta/2$, proving (2).

For completeness, (1)$\Rightarrow$(2) can be proved independently of Theorem~\ref{thm:main}. On the bounded families of morphisms choose relative very ample divisors $A_0$ after finite stratification. Their top degree, the numbers $f^*H\cdot A_0^{n-1}$, and the $A_0$-degrees of reduced exceptional divisors are bounded. The last assertion follows by resolving the geometric generic morphism and spreading its finite exceptional support, followed by noetherian induction, as in Corollary~\ref{cor:references}. Let $E_1,\ldots,E_s$ be the $f$-exceptional prime divisors on $X$. Since
$B\le f^*\Delta+\sum_{i=1}^s E_i$, both $B$ and $T=f^*\Delta$ have bounded $A_0$-degree.

A common integral multiple $A$ of $A_0$ can consequently be chosen so $A-B$ and $A-T$ are pseudo-effective. To justify this for Weil boundaries, embed $X$ by $A_0$ and successively project from points outside the current image until reaching $\Pj^n$. Each projection is finite by \cite[Tag~0B1P]{StacksFoundations}; the integral image keeps dimension $n$, so the final map is surjective. The resulting $n+1$ linear sections have no common zero and pull back $\cO_{\Pj^n}(1)$ to exactly $A_0$. For a prime divisor $D$, its effective cycle pushforward is a hypersurface of degree $A_0^{n-1}\cdot D$. Its Cartier pullback contains $D$: the pullback multiplicity is the ramification index times the positive pushforward residue degree. Thus this pullback is an effective Cartier divisor $D'\ge D$ with $D'\sim(A_0^{n-1}\cdot D)A_0$. Apply this construction to each prime component of $B$ and $T$, multiply by its coefficient, and choose a uniform integer multiple of $A_0$ using their bounded $A_0$-degrees. The top degree of the resulting very ample $A$ is bounded.

Apply \cite[Theorem~1.8]{Birkar16SourceMargin} to $(X,B)$, this $A$, and the effective $\R$-Cartier divisor $T$. Its hypotheses are precisely fixed $n$ and $\eps>0$, bounded $A^n$, and pseudo-effectivity of $A-B$ and $A-T$. It gives a uniform positive lower bound for $t_0(X,B;T')$ as $T'$ ranges over effective $\R$-divisors with $T'\sim_\R T$. Taking $T'=T$ gives the required bound. Choose rational $\beta$ at most half that bound and at most one. Convexity with the given $\eps$-lc boundary shows that $(X,B+\beta T)$ is $\eps/2$-lc.

For the local criterion write $\alpha_i=v(F_i)$, $a_i=A(F_i,X,B)$, $u_i=A(F_i,Y,0)$, and $j=A(v,S,\Sigma)$. Lemma~\ref{lem:comparison} gives
\[
 A(v,X,B)\ge j+\sum_i\alpha_i a_i-v(R).
\]
The lc hypothesis on $(S,\Sigma+R/c)$ gives $v(R)\le cj$, and the assumed comparison at the components of $\Sigma$ gives $a_i\ge\theta u_i$. Because $\Sigma$ contains the support of $K_S-\rho^*K_Y$, the canonical divisor formula gives
$A(v,Y,0)=j+\sum_i\alpha_i u_i$.
Thus $A(v,X,B)\ge(1-c)j+\theta\sum_i\alpha_i u_i\ge\min\{1-c,\theta\}A(v,Y,0)$.
\end{proof}

\begin{rem}\label{rem:parent-cap}
In the local criterion it suffices, for each pair and morphism in the collection, to choose one projective birational morphism $\rho\colon S\to Y$ and one reduced SNC divisor $\Sigma$ as above such that $(S,\Sigma+R/c)$ is lc everywhere and $A(F,Y,0)\le M$ for every $\rho$-exceptional prime divisor $F$, for a uniform $M\ge1$. Every component $F_i$ of $\Sigma$ then satisfies $A(F_i,X,B)\ge\eps$ and $A(F_i,Y,0)\le M$, since $A(F_i,Y,0)=1$ when $F_i$ is not $\rho$-exceptional. Thus one may take $\theta=\eps/M$. This implication does not require a bound on the number of components of $\Sigma$ or on the number of birational modifications used to construct $S$.
\end{rem}

\begin{prop}[Non-lc pushforward pairs with coefficients tending to zero]\label{prop:examples}
For every $n\ge2$ and every fixed $0<\eps<1$, there are pairs and morphisms satisfying the hypotheses of Theorem~\ref{thm:main} for which $(Y,f_*B)$ is not lc, $\deg f_*B=3$, and the positive coefficients of $f_*B$ tend to zero.
\end{prop}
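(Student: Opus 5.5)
The plan is to construct the examples explicitly, using a pencil of hyperplanes with small equal coefficients, and to take $X$ to be the blowup of the base locus of the pencil. Fix $n\ge2$ and $0<\eps<1$. Let $\Lambda\subset Y=\Pj^n$ be a linear subspace of codimension two, and let $f\colon X=\Bl_\Lambda Y\to Y$ be its blowup, with exceptional divisor $E$. For $n=2$, $\Lambda$ is a point and $E$ is the usual exceptional curve. Then $X$ is smooth projective, $f$ is birational, and $f$ resolves the pencil of hyperplanes through $\Lambda$ into a $\Pj^1$-fibration $X\to\Pj^1$. For every integer $m$ with $3/m\le1-\eps$, choose distinct hyperplanes $H_1,\ldots,H_m\supset\Lambda$ and put
\[
 \Delta_m=\frac3m\sum_{i=1}^mH_i,\qquad B_m=f^{-1}_*\Delta_m .
\]
Then $\deg\Delta_m=3$, $f_*B_m=\Delta_m$, and the positive coefficients $3/m$ tend to zero as $m\to\infty$.

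Next I would verify the hypotheses of Theorem~\ref{thm:main}. The strict transforms $H_i'$ are distinct fibers of $X\to\Pj^1$. Hence they are smooth and pairwise disjoint, so $\Supp B_m$ is SNC on the smooth variety $X$. All coefficients are $3/m\le1-\eps$, so \eqref{eq:snc} shows that $(X,B_m)$ is $\eps$-lc, and $K_X+B_m$ is Cartier up to $\R$-scaling.

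For relative nefness, I use $K_X=f^*K_Y+E$ (the codimension of $\Lambda$ is two) and $f^*H_i=H_i'+E$ (each $H_i$ is smooth along $\Lambda$). These give
\[
 f^*(K_Y+\Delta_m)=K_X-E+B_m+3E=K_X+B_m+2E,
\]
so that $K_X+B_m\equiv_f-2E$. Since $-E$ is $f$-ample for a blowup of a smooth center, $K_X+B_m$ is $f$-nef. In the notation of Setup~\ref{setup:source} this means $F=2E\ge0$, consistent with Lemma~\ref{lem:comparison}.

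Finally I would show that the pushforward pair is not lc. The computation above gives
\[
 A(E,Y,\Delta_m)=A(E,Y,0)-\ord_E\Delta_m=2-3=-1<0,
\]
because every $H_i$ contains $\Lambda$ with multiplicity one. Hence $(Y,f_*B_m)$ is not lc, for every $m$. By contrast, $A(E,X,B_m)=1$, which matches \eqref{eq:correction} with $\ord_E F=2$.

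No step is a genuine obstacle. The only point needing care is choosing $E$'s coefficient in $B$ to be zero while keeping $K_X+B$ relatively nef. This works because the correction $F=2E$ is effective with $-F$ relatively ample, which is exactly what is needed. If one prefers the pushforward to have support of other shapes, any coefficient $b\in[0,1-\eps]$ along $E$ also works, since it gives $F=(2-b)E$ with $2-b>0$.
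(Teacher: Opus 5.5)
Your proposal is correct and is essentially the paper's own proof: blow up a codimension-two linear subspace, take $\Delta_m=\frac3m\sum H_i$ over $m$ members of the pencil through it, verify $\eps$-lc by the SNC criterion, and get relative nefness from $K_X+B_m=f^*(K_Y+\Delta_m)-2E$. Non-lc-ness follows from $A(E,Y,\Delta_m)=2-3=-1$. Your closing remark that any coefficient $b\in[0,1-\eps]$ along $E$ also works is a correct small extension.
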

\begin{proof}
Blow up a codimension-two linear subspace $L\subset\Pj^n$, with exceptional divisor $E$. The pencil of hyperplanes through $L$ induces a morphism $X\to\Pj^1$. The strict transforms of distinct pencil members $H_1,\dots,H_m$ are disjoint smooth fibers. Set
\[
 \Delta_m=\frac3m\sum_iH_i,\qquad B_m=\frac3m\sum_iH_i^{\mathrm{str}},
 \qquad 3/m\le1-\eps.
\]
The SNC criterion shows that $(X,B_m)$ is $\eps$-lc. Since $f^*\Delta_m=B_m+3E$ and $K_X=f^*K_Y+E$,
$K_X+B_m=f^*(K_Y+\Delta_m)-2E$ is relatively ample. The log discrepancy $A(E,Y,\Delta_m)$ is $2-3=-1$. The coefficients tend to zero and $\deg\Delta_m=3$ for every $m$.
\end{proof}

\section*{Use of artificial intelligence}
The author developed this paper with the assistance of artificial intelligence tools, including GPT and DeepSeek. He began working on the problem after hearing Professor Caucher Birkar state the conjecture at a conference in Sanya approximately two years before the completion of this paper. The main obstacle was to control the degree of the boundary produced in the induction on dimension. AI-assisted discussions helped the author overcome this difficulty through the degree estimate in Proposition~\ref{prop:ideals}: the normalized generating degree of the positive ideal is bounded independently of the degree of the negative part of the auxiliary divisor. This degree control, which makes the induction possible, was the principal mathematical contribution of the AI assistance. The author takes responsibility for the mathematical content and the final presentation of the paper.

\end{document}